\newtheorem{thm}{Theorem}[section]
\newtheorem{lem}[thm]{Lemma}
\newtheorem{defn}[thm]{Definition}
\newtheorem{que}[thm]{Question}
\newtheorem{prop}[thm]{Proposition}
\newtheorem{cor}[thm]{Corollary}
\newtheorem{rmk}[thm]{Remark}
\newcommand{\be}{\begin{eqnarray}}
\newcommand{\ee}{\end{eqnarray}}
\newcommand{\beal}{\begin{aligned}}
\newcommand{\enal}{\end{aligned}}
\newcommand{\bc}{\begin{cases}}
\newcommand{\ec}{\end{cases}}
\newcommand{\Z}{\mathbb{Z}}
\newcommand{\R}{\mathbb{R}}
\newcommand{\Q}{\mathbb{Q}}
\newcommand{\A}{\mathbb{A}}
\newcommand{\N}{\mathbb{N}}
\newcommand{\cU}{\mathcal{U}}
\newcommand{\cN}{\mathcal{N}}
\newcommand{\cI}{\mathcal{I}}
\newcommand{\cT}{\mathcal{T}}
\newcommand{\cA}{\mathcal{A}}
\newcommand{\cM}{\mathcal{M}}
\newcommand{\cV}{\mathcal{V}}
\newcommand{\eps}{\epsilon}
\newcommand{\wt}{\widetilde}
\newcommand{\wh}{\widehat}
\newcommand{\ol}{\overline}
\newcommand{\om}{\omega}
\newcommand{\Lb}{\Lambda}
\newcommand{\dt}{\delta}
\newcommand{\Dt}{\Delta}
\newcommand{\T}{\mathbb{T}}
\title[Global Behaviors of weak KAM solutions for exact symplectic Twist Maps]{Global Behaviors of weak KAM Solutions for exact symplectic Twist Maps}
\author{Jianlu Zhang}
\address{Hua Loo-Keng Key Laboratory of Mathematics \& Mathematics Institute, Academy of Mathematics and systems science, Chinese Academy of Sciences, Beijing 100190, China}
\thanks{}
\email{jellychung1987@gmail.com}
\subjclass{37E40,37E45,37J40,37J45,37J50,49L25}
\keywords{exact syplectic twist map, Aubry Mather theory, Hamilton Jacobi equation, generalized characteristics, weak KAM solution, transition chain}
\date{}
\begin{document}
\maketitle

\begin{abstract}
We investigated several global behaviors of the weak KAM solutions $u_c(x,t)$ parametrized by $c\in H^1(\T,\R)$. For the suspended Hamiltonian $H(x,p,t)$ of the exact symplectic twist map, we could find a family of weak KAM solutions $u_c(x,t)$ parametrized by $c(\sigma)\in H^1(\T,\R)$ with $c(\sigma)$ continuous and monotonic and
\[
\partial_tu_c+H(x,\partial_x u_c+c,t)=\alpha(c),\quad \text{a.e.\ } (x,t)\in\T^2,
\]
such that sequence of weak KAM solutions $\{u_c\}_{c\in H^1(\T,\R)}$ is $1/2-$H\"older continuity of parameter $\sigma\in \mathbb{R}$. Moreover, for each generalized characteristic (no matter regular or singular) solving 
\[
\left\{
\begin{aligned}
&\dot{x}(s)\in \text{co} \Big[\partial_pH\Big(x(s),c+D^+u_c\big(x(s),s+t\big),s+t\Big)\Big], & \\
&x(0)=x_0,\quad (x_0,t)\in\T^2,& 
\end{aligned}
\right.
\]
we evaluate it by a uniquely identified rotational number $\om(c)\in H_1(\T,\R)$. This property leads to a certain topological obstruction in the phase space and causes local transitive phenomenon of trajectories. Besides, we discussed this applies to high-dimensional cases.
\end{abstract}
\vspace{20pt}

\tableofcontents

\section{Introduction}\label{s1}
\vspace{20pt}

The earliest survey of the area preserving maps can be found from Poincar\'e's research on the three-body problem \cite{P}, which firstly revealed the chaotic phenomenon of low dimensional dynamics. After that, Birkhoff made a systematic research of the area preserving map defined in an annulus region\cite{B}, which inspired the development of other related topics, e.g. the convex billiard map, the geodesic flow on surfaces, etc \cite{Z1,Z2,Ba}. Results of these topics gradually extended the territory of the area preserving maps and comprised the low dimensional dynamic theory of the second half of $20^{th}$ century \cite{G}. Especially, the use of variational method greatly boosted the theoretical development, due to the work of Mather in 1980's. That leads to a flurry of finding invariant sets parametrized by certain {\sf rotational numbers}, both in mathematics and physics.\\

As a direct offspring of these research, the research of {\sf exact symplectic twist maps} is still meaningful and enlightening to the exploration of high dimensional dynamics in nowadays. We can now formalize it by the following $C^1$ diffeomorphism 
\be 
f:(x,p)\rightarrow(x', p'),\quad (x,p)\in T^*\T
\ee
of which we denote by $\wt f(\ol x,p)$ the lift
of this map to the universal covering space, i.e.
 \[
 \wt f(\ol x,p)=(x'+m,p')
 \]
  for all $(\ol x,p)\in T^*\R$ satisfying $\ol x=x+m$, $m\in\Z$. These properties hold for the map:

 \begin{itemize}
 \item {\sf (area-preserving)}
 $dx\wedge dp=dx'\wedge dp'$. 
 
  \item {\sf (exact)} for any noncontractible curve $\gamma\in T^*\T$, 
  \[
  \int_{\gamma}f^*p dx- p dx=0.
  \]
  
 \item {\sf (twist)} $\partial p'/\partial x>0$, equivalently the image 
 $f(L_{x_0})$ of every vertical line $L_{x_0}=\{(x,p)\in T^*\T|x=x_0\}$  
 is monotonically twisted in the $x$-component.  
 \end{itemize}
 The $f$ satisfying the first two items is called a {\sf twist map}
 and satisfying all three items is called an {\sf exact symplectic twist map}. In \cite{M2}, Moser successfully suspended this kind of map into a time-periodic Hamiltonian flow:
 \begin{thm}
For any $C^1$ exact symplectic twist map $f$ defined on bounded annulus $\A:=\{x\in\T,p\in[a,b]\}$,  there exists a time periodic Hamiltonian $H(x,p,t) \in C^2(\A \times \T)$ with positively definite Hessian matrix $\partial^2H/\partial p^2$, such that $f$ coincides with the time-1 map $\phi_H^1$.
\end{thm}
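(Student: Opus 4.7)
The plan is to reconstruct Moser's interpolation argument: produce a time-periodic Lagrangian $L(x,v,t)$ that is strictly convex in $v$ and whose Euler--Lagrange flow from $t=0$ to $t=1$ realizes the lifted map $\wt f$, and then set $H(x,p,t)=\sup_v(pv-L(x,v,t))$. Convexity in $v$ translates to the positive definiteness of $\partial^2 H/\partial p^2$, and the Hamiltonian flow of $H$ is conjugate to the Euler--Lagrange flow of $L$, so the time-$1$ map of $H$ equals $f$.

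The first step is to produce a generating function for $f$. Because $f$ is exact symplectic and twist, the map $(x,p)\mapsto(x,x')$ is a $C^1$ diffeomorphism (the twist condition $\partial x'/\partial p>0$ makes $x'$ a coordinate), and exactness gives a $C^2$ function $h(x,x')$ on the strip in $\R^2$ that lifts $\wt f$, satisfying $p=-\partial_1 h(x,x')$, $p'=\partial_2 h(x,x')$, $h(x+1,x'+1)=h(x,x')$, together with the crucial negative cross-derivative $\partial_{12}^2 h<0$.

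Next, construct the Lagrangian by prescribing straight lines as the time-$1$ connecting extremals. Concretely, for a path $\gamma(s)$ with $\gamma(0)=x$, $\gamma(1)=x'$, one wants $\int_0^1 L(\gamma(s),\dot\gamma(s),s)\,ds = h(x,x')$ to be achieved on the segment $s\mapsto x+s(x'-x)$. Following Moser, choose a smooth profile $\chi(s)\geq 0$ with $\int_0^1 \chi=1$ and $\chi\equiv 0$ near $s=0,1$, and set
\[
L(x,v,t)=\chi(t)\,\partial_2 h\bigl(x-\Phi(t)v,\,x+(1-\Phi(t))v\bigr)\cdot v + (\text{calibrating terms}),
\]
where $\Phi(t)=\int_0^t\chi$; the calibrating terms are chosen so that the action on straight lines evaluates to $h(x,x')$ and the boundary conditions match. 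One then verifies that this $L$ is $\Z$-periodic in $x$, strictly convex in $v$ (using $\partial^2_{12}h<0$), and, thanks to the vanishing of $\chi$ near the endpoints, extends $1$-periodically in $t$ to a $C^2$ Lagrangian on $\A\times\T$.

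The principal obstacle is precisely the preservation of strict convexity in $v$ throughout the interpolation while simultaneously arranging time-periodicity in $t$; a naive linear interpolation between $h_0(x,x')=(x'-x)^2/2$ and $h_1=h$ breaks down because intermediate generating functions may lose the twist property or fail to satisfy compatible boundary values. Moser's trick of concentrating the action inside $[0,1]$ via the cutoff $\chi$ decouples the convexity estimate from the boundary match, reducing the convexity check to an estimate $\partial^2_{vv}L>0$ coming directly from $-\partial^2_{12}h>0$ and the mean-value form of the second derivative. Once $L$ is built, the Legendre transform immediately yields the desired $H\in C^2(\A\times\T)$ with $\partial^2 H/\partial p^2>0$, and the identification $\phi_H^1=f$ follows because the minimizers of the action between prescribed endpoints at integer times are, by construction, the segments that reproduce $h$.
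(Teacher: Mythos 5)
The paper does not prove this theorem; it is stated as a direct citation of Moser's interpolation result [M2], so there is no internal proof to compare against. Your proposal is indeed a reconstruction of Moser's argument (generating function, time-localized Lagrangian, Legendre transform), so the strategy matches the one the paper relies on.

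Two points in the sketch do not yet close, and the second is a genuine gap. First, the displayed Lagrangian is not internally consistent: for the reparametrized path $\gamma(s)=x+\Phi(s)(x'-x)$ the velocity is $\dot\gamma(s)=\chi(s)(x'-x)$, so substituting $v=\dot\gamma(s)$ into the slots $x-\Phi(t)v$ and $x+(1-\Phi(t))v$ does \emph{not} return $x$ and $x'$; the slots would have to carry $v/\chi(t)$, which is singular exactly where $\chi$ vanishes. Second, and more seriously, a cutoff that makes the action ``concentrate inside $[0,1]$'' by having the $h$-dependent part of $L$ vanish near $t=0,1$ destroys strict convexity in $v$ there: if $L(\cdot,\cdot,t)$ degenerates, the Legendre transform $H(\cdot,\cdot,t)$ is not a $C^2$ function with $\partial^2 H/\partial p^2>0$, which is precisely the conclusion you need on all of $\A\times\T$. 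Moser's actual construction keeps a nondegenerate convex quadratic kernel alive for every $t$ and interpolates through a one-parameter family of \emph{generating functions} $h^t$ that define genuine twist maps for all $t\in(0,1]$, degenerating to the identity as $t\to 0^+$ at a controlled rate (the rescaling $h^t(x,x')\sim t^{-1}h(x,x+t(x'-x))$ is what keeps $-\partial^2_{12}h^t$ bounded away from $0$). All of that has been folded into your ``calibrating terms,'' but those terms are where the theorem lives; as written, the convexity claim near $t=0,1$ would fail.
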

The significance of this result is that it connects the dynamics of exact symplectic twist map $f$ to the variational properties of the corresponding Lagrangian (now is known as the {\sf Tonelli Lagrangian})
\[
L(x,v,t):=\max_{p\in T_x^*M}\{\langle v,p\rangle-H(x,p,t)\},\quad\text{for\ }(x,v,t)\in T\T\times\T
\]
 Therefore, we can find variational minimal orbits with different topological properties, which form different invariant sets in the phase space. That's the essence of the {\sf Aubry Mather theory}, see \cite{Mat1,Mat2}.\\

Based on previous Theorem, we can now propose a Tonelli Lagrangian $L(x,v,t)$ satisfying the {\sf Standing Assumptions}:
\begin{itemize}
\item {\sf (Smoothness)} $L(x,v,t)$ is $C^2$ smooth of $(x,v,t)\in TM\times\T$;
\item {\sf (Positive Definiteness)} the Hessian matrix $L_{vv}$ is positively definite for any $(x,v,t)\in TM\times\T$;
\item {\sf (Completeness)} the Euler-Lagrange equation of $L(x,v,t)$ is well defined for the whole time $t\in\R$;
\end{itemize}
 where $M$ is any smooth, boundless compact manifold (in the current paper $M=\T$).\\
 
We need to specify that, as a parallel correspondence of the Aubry Mather theory, Fathi developed a  PDE viewpoint in the early of the $21^{st}$ century \cite{F}. Precisely, we could find a list of so called {\sf weak KAM solutions} of the following {\sf Static Evolutionary Hamilton Jacobi equation}:
\be\label{eq:ehj}
\partial_tu_c+H(x,\partial_x u_c+c,t)=\alpha(c),\quad\forall c\in H^1(\T,\R),\; \text{a.e.\ } (x,t)\in\T^2.
\ee
For every fixed $t\in\T$, $u_c(x,t)$ is a {\sf semiconcave function} of $x\in\T$ with linear module \cite{Ca}. For any $(x,t)\in\T^2$, the {\sf super differential set} $c+D_x^+u_c(x,t)$ is a convex set of $\R$.  If $p\in ex\big(c+D_x^+u_c(x,t)\big)$ is an extremal point, then $(x,t,p,\partial_t u_c(x,t))$ will decide a unique {\sf backward semi-static orbit} as the initial point (see Sec.\ref{s2} for the proof). More conclusions about the weak KAM solutions can be found in Sec. \ref{s2} with details.\\

As a warmup, we now exhibit a dynamic simulation of the {\sf standard map}, to give  the readers a concrete impression of the global behaviors which the parametrized weak KAM solutions $u_c$ may possess: Let's start from a integrable map $f_0:(x,p)\rightarrow(x+p,p)$, of which we can see that the whole phase space is foliated by invariant circles $\cT_w=\{(x,\om)\in T^*\T|\om\in\R\}$. That implies we can find a list of trivial weak KAM solutions of (\ref{eq:ehj}) satisfying $u_c(x,t)\equiv0, c=w$ and equation (\ref{eq:gc}) becomes trivial $\dot{x}(s)=w$. If we perturb $f_0$ by $f_\eps= f_0+\eps f_1$
\[
f_\eps(x, p)=(x+p+\eps V(x),p+\eps V(x)),\quad0<\eps<1
\]
and gradually increase $\eps$, we could  observe that at first most of the tori $\cT_w$ preserve and just deform a little bit (the KAM theorem ensures), then gradually they break up and turn into a chaotic state, as shown in Fig. \ref{eps-sm}. Accordingly, as $\eps$ raises, for more and more $c\in H^1(\T,\R)$ the associated $u_c$ will lose the $C^1$ smoothness and singularity will come out and propagate.\\
\begin{figure}
\begin{center}
\includegraphics[width=9cm]{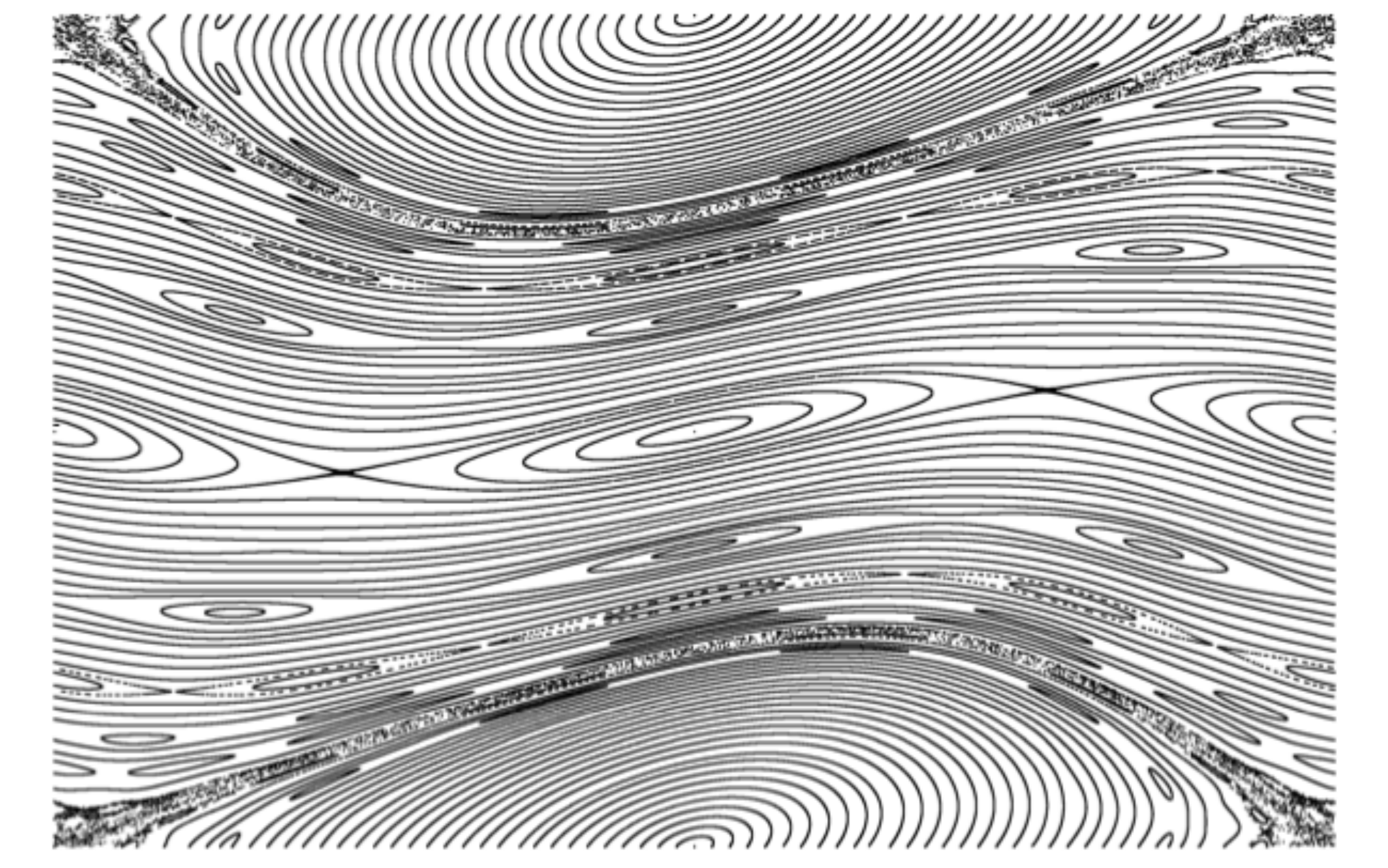}
\caption{Phase portrait of the stand map for $V(x)=\cos2\pi x$, $\eps=2\pi\cdot 0.1$}
\label{eps-sm}
\end{center}
\end{figure}

For a general symplectic twist map, previous process is still observable. However, we do expect to `pick up' enough trajectories in the phase space, to persist the global foliation structure, which might has a weak regularity:

\begin{thm}[{\sf Regularity}]\label{thm:holder}
There exists a sequence of weak KAM solutions $\{u_{c(\sigma)}(x,t)+c(\sigma)x|x\in\R,t\in\T\}_{\sigma\in\R}$ of (\ref{eq:ehj}), which is $1/2-$H\"older  continuous w.r.t. the parameter $\sigma\in \R$. Here $c(\sigma)\in H^1(\T,\R)$ is a strictly increasing continuous function.
\end{thm}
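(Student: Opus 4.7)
The plan is to combine the variational (Lax--Oleinik) representation of weak KAM solutions with the convexity of the Mather $\alpha$-function and the asymptotic rotation of calibrated orbits. First I would fix a canonical representative of each $u_c$ modulo additive constants---for example by normalizing at a distinguished point of the projected Aubry set, or by prescribing the average on $\T^2$---so that $c\mapsto u_c$ becomes a genuine selection. Setting $w_c(x,t):=u_c(x,t)+cx$ on the universal cover $\R\times\T$, the family solves $\partial_t w_c+H(x,\partial_x w_c,t)=\alpha(c)$, and the difference $u_c-u_{c'}$ remains $\T$-periodic even though $w_c-w_{c'}$ itself has linear growth $(c-c')x$.

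Next, for each pair $c<c'$ and base point $(x,t)$, I would take a backward calibrated curve $\gamma$ for $u_c$ on $[t-T,t]$ with $\gamma(t)=x$ and plug it as a test curve into the variational representation of $w_{c'}$. The Lagrangian terms cancel, so subtracting the identity for $w_c$ from the corresponding inequality for $w_{c'}$ yields
\[
w_{c'}(x,t)-w_c(x,t)\leq [w_{c'}-w_c]\bigl(\gamma(t-T),t-T\bigr)+\bigl(\alpha(c')-\alpha(c)\bigr)T,
\]
with the reverse inequality obtained by swapping the roles of $c$ and $c'$. The asymptotic rotation result (as discussed in Sec.~\ref{s2}) gives that the time average of $\dot\gamma$ converges to some $\omega\in\partial\alpha(c)$, so $\gamma(t-T)\sim x-\omega T$ up to sublinear error. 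After absorbing the $cx$-shift using periodicity of $u_c-u_{c'}$, the comparison reduces to an oscillation bound of the schematic form $\|u_c-u_{c'}\|_\infty\lesssim C_0/T+T\cdot\bigl|\alpha(c')-\alpha(c)-\omega(c'-c)\bigr|$.

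The bracket $\alpha(c')-\alpha(c)-\omega(c'-c)$ is nonnegative by convexity of $\alpha$ and admits a quadratic bound $O((c'-c)^2)$ on those portions of $H^1(\T,\R)$ where $\alpha$ has a $C^{1,1}$ upper envelope; balancing the two terms with $T\sim|c-c'|^{-1/2}$ then produces the $1/2$-H\"older estimate. On flats of $\alpha$ where the subdifferential is constant, the bracket vanishes identically and the dependence is in fact Lipschitz, so $1/2$ is the worst-case exponent. The continuous strictly increasing reparametrization $c(\sigma)$ is designed to equalize these two regimes: essentially one takes $\sigma$ as an arclength-type parameter along the graph of the (possibly multivalued) rotation function $\omega(c)\in\partial\alpha(c)$, stretching through corners of $\alpha$ and absorbing flats, so that the family $\{w_{c(\sigma)}\}_\sigma$ achieves a uniform $|\sigma-\sigma'|^{1/2}$ bound.

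The main obstacle I anticipate is handling the normalization ambiguity at the receding endpoint $\gamma(t-T)$ together with the sublinear error in the asymptotic rotation, at those $c$ where $\alpha$ is only $C^1$ or has a corner. Where a quadratic upper bound on the convexity deficit is unavailable, the horizon $T$ must be chosen adaptively from local properties of $\alpha$ near $c$, and the recurrence of calibrated curves in the projected Aubry set must be invoked to pin down $[w_{c'}-w_c]$ at the shifted endpoint. Verifying the continuity and strict monotonicity of $c(\sigma)$ while simultaneously maintaining the uniform $|\sigma-\sigma'|^{1/2}$ estimate is the technical crux of the argument.
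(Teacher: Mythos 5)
Your proposal takes a genuinely different route from the paper, and unfortunately it has a gap that the paper's actual argument is specifically designed to avoid. You attempt to control $\|u_c - u_{c'}\|_\infty$ directly by plugging a backward calibrated curve of $u_c$ into the Lax--Oleinik representation of $u_{c'}$ over a finite horizon $T$, which yields a bound of the schematic form $C_0/T + T\cdot\bigl(\alpha(c')-\alpha(c)-\alpha'(c)(c'-c)\bigr)$, and you then invoke a quadratic bound $O((c'-c)^2)$ on the convexity deficit. That bound would require $\alpha$ to be $C^{1,1}$; for exact symplectic twist maps, $\alpha$ is $C^1$ (item (1) of Proposition~\ref{prop:twist}) but not $C^{1,1}$ in general---near the corners of $\beta$ at rationals, $\alpha$ is merely differentiable, and the convexity deficit is only $o(|c-c'|)$ with no uniform rate. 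Your fallback of choosing $T$ ``adaptively'' and reparametrizing by ``arclength along $\omega(c)$'' is left unspecified and does not obviously produce a uniform modulus; you also acknowledge, without resolving, the normalization ambiguity at the receding endpoint $\gamma(t-T)$.

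The mechanism the paper actually uses is orthogonal to all of this and does not touch the modulus of continuity of $\alpha$ at all. The crucial structural input is a \emph{monotonicity} result: after choosing the canonical representatives $u_c^-(\cdot,t)=h_c^\infty((x_0,0),(\cdot,t))$ and passing to the rectified functions $\wt u_c^-(x)=u_c^-(x,0)+cx$, one shows (via Mather's Cross Lemma, Lemma~\ref{lem:order}, the $(p/q)^{\pm}$ classification of backward semi-static curves, and the explicit description of $\wt u_c^-$ as a gluing of $\wt u_{p/q+}$ and $\wt u_{p/q-}$ at a switch point $x_i(c)$ monotone in $c$) that $\wt u_c^-\le \wt u_{c'}^-$ for $c<c'$. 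Because the difference $\wh u_{c'}^- - \wh u_{c}^-$ is sign-definite and Lipschitz, the elementary inequality for a one-signed $C$-Lipschitz function $f$, $\int_0^1 |f|\,dx \gtrsim \|f\|_\infty^2/C$, turns the $L^1$ distance into an upper bound on the squared $L^\infty$ distance. The paper then simply sets $\sigma(c)=\int_0^1(\wh u_c^- - \wh u_0^-)\,dx$, which is continuous and strictly increasing by monotonicity, and the $1/2$-H\"older estimate $\|\wh u_{c(\sigma_1)}^--\wh u_{c(\sigma_2)}^-\|\le\sqrt{2|C_{\sigma_1}+C_{\sigma_2}|}\,|\sigma_1-\sigma_2|^{1/2}$ drops out with no balancing of horizons, no assumption on the regularity of $\alpha$, and no subtlety at the receding endpoint. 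The exponent $1/2$ is the exponent of the $L^1$-to-$L^\infty$ interpolation for Lipschitz functions, not the outcome of optimizing $T$ against a convexity deficit; this is the idea missing from your proposal, and it is also why the paper defines $\sigma$ as an $L^1$-area parameter rather than anything tied to the graph of the rotation function.
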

\begin{rmk}
This conclusion was initially proposed by Mather in a sketch of {\it a priori} unstable Arnold diffusion problem \cite{Mat3}, to construct a global transition chain benefiting from a {\sf normally hyperbolic invariant cylinder} structure. The dynamic on the cylinder is exactly decided by a symplectic twist map and the regularity of the weak KAM solutions w.r.t. some effective parameter will lead to the regularity of the stable (resp. unstable) manifold of the cylinder. Later it was proved in \cite{Z} for generic twist maps (with a hyperbolicity assumption). Here we remove the `genericity' condition in \cite{Z} and verified that the global regularity of weak KAM solutions exists for general exact symplectic twist maps. Besides, a two dimensional Finsler metric case is considered in \cite{CX}, where they define an `elementary weak KAM solution' to avoid the analysis of  $c\in H^1(\T^2,\R)$.

 A heuristic understanding of this Theorem is that although the global foliation structure of invariant tori $\cT_w$ may not exist for general twist maps, a `weak foliation' structure consisting of {\sf backward invariant tori} 
 \[
\cT_{\om(c)}:= \Big\{\Big(x,t,ex\big(D^+u_c(x,t)+c\big),\partial_tu_c(x,t)\Big)\subset T^*\T^2\Big|(x,t)\in\T^2\Big\}_{c\in H^1(\T,\R)}
 \]
  could still be found.
\end{rmk} 

Recall that the lack of regularity of previous weak foliation is essentially caused by the singularity of the weak KAM solutions $\{u_c\}_{c\in H^1(\T,\R)}$. Although the singular points of each $u_c$ just form a measure zero set in the configuration space $\T^2$, they indeed changes the topological structure of the phase space and complex dynamic phenomena happen \cite{Mat3}.\\

Nonetheless, the propagation of the singularity is still predictable. Usually we introduce the following differential inclusion equation:
\be\label{eq:gc}
\left\{
\begin{aligned}
&\dot x:=\frac{dx}{ds}\in \text{co}\Big[ \partial_pH\Big(x(s),c+D^+u_c\big(x(s),s+t\big),s+t\Big)\Big],\\
&x(0)=x_0\in\T,\;t\in\T.
\end{aligned}
\right.
\ee
we can see that any solution of (\ref{eq:gc}) is unique for fixed initial point, and such a solution is called a {\sf generalized characteristic} (GC for short). Whatever $(x,t)\in\T^2$ is {\sf singular} or {\sf regular} of $u_c$, this definition always ensures the existence of the GC starting from it. Moreover, the propagation of the GCs has the following property:


\begin{thm}[{\sf Rotation Number}]\label{thm:g-c-t}
$\forall (x,t)\in\T^2$ as the initial state, the solution $\gamma(s)$ of (\ref{eq:gc}) is unique and at least one-side unbounded, namely it's well defined for at least one of $[0,+\infty)$ and $(-\infty,0]$. Moreover, the {\it rotation number $\rho(\gamma)$} defined by 
\[
\rho(\gamma)=\lim_{\substack{T\rightarrow+\infty\\(\text{resp.} -\infty)}}\frac{\gamma(T+s)-\gamma(s)}{T},\quad\forall s\in\T
\] 
equals $\alpha'(c)$, which is the first derivative of Mather's $\alpha-$function.
\end{thm}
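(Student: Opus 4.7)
The plan splits into three claims: uniqueness of $\gamma$, one-sided extendability, and the identification $\rho(\gamma)=\alpha'(c)$. For \emph{uniqueness} I would exploit the semiconcavity of $u_c(\cdot,t)$ together with the strict convexity of $H$ in $p$. If $\gamma_1,\gamma_2$ are two solutions of (\ref{eq:gc}) with the same initial condition, differentiating $|\gamma_1-\gamma_2|^2/2$ and combining the one-sided Lipschitz estimate on $D^+u_c$ (its Lipschitz constant being furnished by the semiconcavity modulus) with the local Lipschitz continuity of $\partial_pH$ yields a Gronwall-type bound. The essential point is that the multivalued right-hand side of (\ref{eq:gc}) is the convex hull of $\partial_pH(x,c+D^+u_c,\cdot)$, so the monotonicity inherited from semiconcavity applies directly; this is a standard argument for generalized characteristics.

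For \emph{one-sided extendability}, compactness of $\T$ and boundedness of the multifunction on the right of (\ref{eq:gc}) rule out any finite-time blow-up, so the question reduces to continuing the flow through the singular set of $u_c$. In dimension one, that singular set is a countable union of Lipschitz arcs along which singularities propagate forward in time. From a regular initial $(x,t)$ the GC is a classical characteristic and is extendable in both time directions. From a singular $(x,t)$, forward extension is compatible with the forward propagation of singularities, whereas backward extension may fail because two distinct calibrated curves can meet at a shock. Hence at every initial point $\gamma$ is at least one-side unbounded.

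The \emph{rotation number} identification is the main content. I would first treat a regular initial point: then $\gamma$ coincides with a backward calibrated curve of $u_c$, whose lift to $T^*\R$ is a Tonelli minimizer belonging to the lifted Aubry set $\wt{\cA}(c)$; Mather's rotation-vector theory gives $\rho(\gamma)\in\partial\alpha(c)$, and the specific value $\alpha'(c)$ is pinned down by the monotone parametrization $c=c(\sigma)$ from Theorem \ref{thm:holder}. For a singular initial $(x,t)$, the two extremal points of $c+D_x^+u_c(x,t)$ each generate a backward calibrated curve $\gamma^\pm$ with rotation number $\alpha'(c)$, and I would sandwich $\gamma$ between $\gamma^\pm$.

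The sandwich step is the principal obstacle. The twist condition provides an order-preserving structure for orbits in $T^*\R$, but $\gamma$ is a priori only a configuration-space curve, so some care is needed to lift it using selections from $\partial_pH(x,c+D^+u_c,\cdot)$ before invoking Birkhoff monotonicity. Combined with the semiconcavity-based control on the oscillation of $\dot\gamma$, this should upgrade $\liminf$ and $\limsup$ of $(\gamma(T+s)-\gamma(s))/T$ to a common limit equal to $\alpha'(c)$.
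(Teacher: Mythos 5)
Your decomposition into uniqueness, one-sided extendability, and the rotation-number identification is the right shape, and the Gronwall argument for uniqueness matches the paper's Proposition \ref{prop:uni}. Your extendability step, however, glosses over the mechanism the paper actually needs: the claim is not merely that singularities ``propagate forward'' but that once $(\gamma(0),t)$ is a singular point of $u_c^-$, the GC stays \emph{singular for every} $s\in[0,+\infty)$. The paper proves this by contradiction using uniqueness: if $u_c^-$ were differentiable at $(\gamma(\iota),t+\iota)$ for some $\iota>0$, the (unique) backward semi-static curve through that regular point would, by Proposition \ref{prop:uni}, agree with $\gamma$ on $[0,\iota]$, contradicting non-differentiability at the initial point. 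This permanence is what lets the forward GC be continued to all of $[0,+\infty)$; boundedness of $\partial_pH$ alone does not exclude the GC terminating by collapsing onto a regular point where a distinct backward calibrated curve forces a branching ambiguity.

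The rotation-number step is where the gap is real. Your sandwich idea between the two extremal backward calibrated curves $\gamma^\pm$ does not apply to the forward singular GC: $\gamma^\pm$ are defined for $s\le 0$ while the GC you need to control lives on $s\ge 0$, so there is no overlapping time window on which a comparison argument can be run, and the fact that $\gamma^\pm$ have rotation number $\alpha'(c)$ backward in time says nothing directly about $\gamma$ forward in time. The paper instead exploits the permanence of singularity together with the one-dimensional topology: since the forward GC never meets the projected Aubry set $\cA(c)$ (else $u_c^-$ would be differentiable there, contradicting permanence) and has no self-intersections, one can argue by cases. When $\alpha'(c)=p/q$, the GC cannot cross the $p/q$-periodic Aubry set, which pins $\rho(\gamma)=p/q$. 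When $\alpha'(c)$ is irrational, $\cA(c)=\cM(c)$ is a Denjoy minimal set and the extended GC is trapped in a gap whose width shrinks to zero; hence $\gamma$ is asymptotic to $\cA(c)$ and inherits its rotation number. Neither ingredient (the permanence lemma, nor the gap-shrinking property of Denjoy sets and the rational/irrational dichotomy) appears in your proposal, and without them the limit defining $\rho(\gamma)$ is not shown to exist, let alone equal $\alpha'(c)$. Also, for the regular case you do not need Theorem \ref{thm:holder} to ``pin down'' the value: item (1) of Proposition \ref{prop:twist} already gives $\alpha\in C^1$ in dimension one, so $\partial\alpha(c)$ is a singleton.
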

\begin{rmk}
Due to previous Theorem \ref{thm:g-c-t},  for $\alpha'(c)=\R\backslash\Q$, each singular GC of $\alpha'(c)$ rotation number has to be asymptotic to the {\sf projected Aubry set} $\cA(c)$. 

As for the case $\alpha'(c)=p/q\in\Q$, the topological structure of the singular GCs would be much more complicated. Notice that each GC $\gamma$ has no self-intersection, if we lift them into the universal space $\{(x,t)\in\R^2\}$, the constraint of dimension will decide 3 different types by the following:
\begin{itemize}
\item $\Big(\dfrac pq\Big)^+-$type, if the lift $\ol\gamma$ satisfies $\ol\gamma(q)-\ol\gamma(0)>p$;
\item $\Big(\dfrac pq\Big)^--$type, if the lift $\ol\gamma$ satisfies $\ol\gamma(q)-\ol\gamma(0)<p$;
\item $\dfrac pq-$type, if the lift $\ol\gamma$ satisfies $\ol\gamma(q)-\ol\gamma(0)=p$.
\end{itemize}
These rotation symbols were firstly introduced by Mather in \cite{Mat0}. Benefit from these, we can get a clearer classification of the singular GCs now:
%
%
 \begin{enumerate}
 \item periodic \Big[$p/q-$type\Big];
  \item $\Big(\dfrac pq\Big)^+-$asymptotic to $\cA(c)$;
\item $\Big(\dfrac pq\Big)^--$asymptotic to $\cA(c)$.
 \item $\Big(\dfrac pq\Big)^+-$asymptotic to case (1);
 \item $\Big(\dfrac pq\Big)^--$asymptotic to case (1).
  \end{enumerate}
 The former 3 types are common in the phase space. For instance, for the stand map with $V(x)=\cos2\pi x$ (mentioned before), there exists an interval $[-c_0,c_0]$ such that for any $c$ in it, $\alpha'(c)=0$. Then for suitably small $\eps\ll1$, we can find (1)-type singular GC of $c=0$, (2)-type singular GC for $c=c_0$ and (3)-type singular GC for $c=-c_0$.
 
 However, we confess that we couldn't exclude the existence of the later 2 types of singular GCs. It could be artificially constructed for some maps with sort of `fragile dynamics', but shouldn't be typical.
\end{rmk}

The last fact we would like to illustrate, is that the singularity would never happen for isolated $c\in H^1(\T,\R)$. Precisely, for those $c\in H^1(\T,\R)$ of which (\ref{eq:ehj}) inherits no classical solutions, the set they form can be denoted by $\mathfrak I\subset H^1(\T,\R)$; If we take the interior of $\mathfrak I$, then
\[
\mathring{\mathfrak I}=\bigcup_{i=1}^\infty(a_i,b_i)
\]
of which each open interval $(a_i,b_i)$ ({\sf Instability Interval}) corresponds to a so called {\sf Birkhoff Instability Region} (BIS for short) in the phase space. The existence of wandering orbits in the BIS is proved by Mather:

\begin{thm}[Mather \cite{Mat}]\label{thm:s-t}
For any $c,c'\in (a_i,b_i)$, $\wt\cA(c)$ and $\wt\cA(c')$ are dynamically connected, namely, there exists heteroclinic orbits connecting them.
\end{thm}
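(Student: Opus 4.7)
The plan is to recast Mather's original variational argument \cite{Mat} in the weak KAM language of this paper. For each $c$ let $u_c^-$ and $u_c^+$ denote the conjugate pair of backward/forward weak KAM solutions, so that $\wt\cA(c)$ consists of the orbits simultaneously calibrated by both. For $c,c'\in(a_i,b_i)$, introduce the transition cost
\[
B(c,c')\;=\;\inf_\gamma\Bigl\{\int_{-\infty}^{0}\bigl[L-c\,\dot\gamma-\alpha(c)\bigr]\,dt\;+\;\int_{0}^{+\infty}\bigl[L-c'\,\dot\gamma-\alpha(c')\bigr]\,dt\Bigr\},
\]
where $\gamma:\R\to\T$ ranges over $C^1$ curves for which both improper integrals make sense. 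A finite minimizer of $B(c,c')$ is automatically an Euler--Lagrange trajectory calibrated by $u_c^-$ on $(-\infty,0]$ and by $u_{c'}^+$ on $[0,+\infty)$; by standard weak KAM rigidity its $\alpha$-limit set lies in $\wt\cA(c)$ and its $\omega$-limit set in $\wt\cA(c')$, giving the desired heteroclinic.

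The argument then splits into two parts. First, I would reduce to the case $|c-c'|$ small by a chaining argument: a finite subdivision $c=c_0<c_1<\cdots<c_N=c'$ inside $(a_i,b_i)$ together with short heteroclinics at each step can be concatenated into a true heteroclinic via a Tonelli-setting pseudo-orbit tracking lemma. Second, for neighboring $c$ and $c'$ I would show $B(c,c')<+\infty$ by exhibiting an explicit competitor path: follow a calibrated curve of $\wt\cA(c)$ up to some time, cross the BIR using a Birkhoff-type connecting arc (which exists precisely because $(a_i,b_i)$ is an instability interval and no invariant circle with rotation number in $\alpha'((a_i,b_i))$ separates the phase space), then follow a calibrated curve of $\wt\cA(c')$. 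Semiconcavity of $u_c^\pm$ together with Theorem \ref{thm:holder} controls the action cost of the transition segment uniformly in $c,c'$. Existence of a minimizer of $B(c,c')$ then follows from Tonelli compactness on truncations and a diagonal argument as the truncation length tends to infinity.

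The main obstacle is the shadowing step, in the absence of any a priori hyperbolicity of the Aubry sets: one cannot invoke a $\lambda$-lemma. Following Mather, the correct tool is a direct variational sliding argument in which one perturbs the cohomology class smoothly along finite-time action minimizers and exploits monotonicity of $c\mapsto\alpha'(c)$ to force the minimizers to migrate across the BIR. Verifying that these sliding minimizers do not become trapped at an intermediate periodic orbit --- which is precisely where cases (4)--(5) in the rotation-type classification following Theorem \ref{thm:g-c-t} might otherwise obstruct transitivity --- is the delicate core of the proof and relies crucially on the hypothesis that the whole segment $[c,c']$ lies in a single instability interval, ruling out any intermediate $c^\ast$ at which a classical solution of (\ref{eq:ehj}) would reintroduce a topological barrier.
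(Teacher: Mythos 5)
Your transition cost $B(c,c')$ is the natural first thing to write down, but as stated it does not produce an Euler--Lagrange orbit, and this is the central difficulty that the paper's proof is designed to circumvent. A free minimizer $\gamma$ of $B(c,c')$ is an extremal of $L_c$ on $(-\infty,0]$ and of $L_{c'}$ on $[0,+\infty)$; these two Lagrangians share the same Euler--Lagrange equation, but the natural transversality (Erdmann corner) condition at $t=0$ reads
\[
L_v(\gamma(0),\dot\gamma(0^-),0)-c \;=\; L_v(\gamma(0),\dot\gamma(0^+),0)-c',
\]
i.e.\ the momentum jumps by exactly $c'-c$ at $t=0$. So the minimizer is a concatenation of two semi-infinite orbit arcs, not a heteroclinic orbit. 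Your later gesture toward a ``direct variational sliding argument in which one perturbs the cohomology class smoothly'' points at the right idea, but you never supply the mechanism that removes the corner, and the statement ``a finite minimizer of $B(c,c')$ is automatically an Euler--Lagrange trajectory'' is false as written.

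The paper's argument (following Cheng--Yan) resolves precisely this. It first proves that $\cN(c)\cap\Sigma_0$ and $\cN(c')\cap\Sigma_0$ are both contained in a common homologically trivial open set $U\subsetneq\T$ (Lemmas 5.1--5.2, using upper semi-continuity of $c\mapsto\cN(c)$), then chooses a closed $1$-form $\mu$ with $[\mu]=c'-c$ and $\supp\mu\cap U=\emptyset$ (Lemma 5.3), and finally modifies the Lagrangian to $L_{\eta,\mu,\rho}=L-\langle\eta+\rho(t)\mu,v\rangle$, where $\rho$ interpolates from $0$ to $1$ over $t\in[0,\delta]$. The cohomology change is thus \emph{localized in space} away from where the minimizer lives and \emph{spread smoothly in time}; since the minimizer of $h^\infty_{\eta,\mu,\rho}$ never meets $\supp\mu$, it satisfies the unmodified Euler--Lagrange equation globally --- no corner. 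This ``local surgery'' is exactly the missing ingredient in your proposal. Your chaining step and the observation that the whole segment $[c,c']$ lying in a single instability interval excludes intermediate invariant circles are both correct and correspond to the $c$-equivalence / transition-chain argument in the paper; what is missing is the geometric localization of $c'-c$ that makes the gluing produce an actual orbit.
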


The original proof in \cite{Mat} of this result is rather complicated. Here we gave a simplified proof in Sec. \ref{s5}, by making use of the global properties which have been proved in aformentioned theorems. Besides, we proposed several heuristic remarks in Sec. \ref{s5}, to show the possibility of a generalization to high dimensional case.\\

\subsection{Organization of the article} This paper is organized as follows: In Sec. \ref{s2} we reviewed some background knowledge of the weak KAM solutions and generalized characteristics for twist maps. Based on these results, we gave the proof of Theorem \ref{thm:holder} in Sec. \ref{s3}; In Sec. \ref{s4}, we proved Theorem \ref{thm:g-c-t}. Finally, in Sec. \ref{s5} we proved Theorem \ref{thm:s-t} and gave a summary of possible extensions.\\

\noindent\textbf{Acknowledgement} This work is supported by the Natural Scientific Foundation of China E0110002 (Grant No. 11901560). The author is grateful to Prof. Arnaud for introducing their result on the $c-$continuity of weak KAM solutions \cite{AZ}, which is hopeful to be used to prove the H\"older-continuity w.r.t. $c$ in the furture work. The author also thanks the anonymous referee for helpful revisions and suggestions. 

\vspace{20pt}

\section{Preliminary: Mather Theory and the weak KAM solutions of time-periodic Lagrangians}\label{s2}

\vspace{20pt}

\subsection{Mather Theory for Tonelli Lagrangians} For the time-periodic Lagrangian $L(x,v,t)$ satisfying our standing assumptions (with general manifold $M$), the {\sf critical curve} is usually defined by $\gamma :\R\rightarrow M$, such that the following {\sf Euler-Lagrange equation} holds
 \be\label{eq:el}
 \frac {d}{dt}L_v(\gamma,\dot\gamma,t)=L_x(\gamma,\dot\gamma,t)
 \ee
for all $t\in\R$. Notice that the minimizer of the following 
\be\label{eq:act-zero}
h((x,t),(y,t'))&:=&\min_{\substack{\gamma\in C^{ac}([t,t'],M)\\\gamma(t)=x,\gamma(t')=y}}A(\gamma)\big|_{[t,t']}\nonumber\\
&=&\min_{\substack{\gamma\in C^{ac}([t,t'],M)\\\gamma(t)=x,\gamma(t')=y}}\int_t^{t'}L(\gamma,\dot\gamma,\tau)d\tau
\ee
has to be a solution of the Euler-Lagrange equation on $[t,t']\subset\R$. On the other side, due to the completeness assumption, for any $(x,v,s)\in TM\times\T$, there exists a unique critical curve $\gamma$  starting from it, and can be extended for all $t\in\R$. If we denote by $\phi_L^t$ the Euler Lagrange flow, we can make use of the {\sf Birkhoff Ergodic Theorem} and get a $\phi_L^t-$invariant probability measure by
\be
\int_{TM\times\T}fd\mu_\gamma:=\lim_{T\rightarrow+\infty}\frac1T\int_0^Tf(\gamma,\dot\gamma,t)dt,\quad\forall f\in C_c(TM\times\T,\R).
\ee
Gather all these invariant probability measure into a set $\mathfrak M_L$, for any closed $1-$form $\eta(x)dx$ with $[\eta]=c\in H^1(M,\R)$, the parametrized Lagrangian 
 $L_c(x,v,t):=L(x,v,t)-\langle c,v\rangle$ possesses the same Euler-Lagrange equation with $L(x,v,t)$, so the following {\sf Mather's Alpha function} $\alpha:H^1(M,\R)\rightarrow\R$
\be\label{eq:alpha}
\alpha(c)=-\min_{\mu\in\mathfrak{M}_L}\int L-\eta\; d\mu, \quad[\eta]=c\in H^1(M,\R)
\ee
is well defined and the minimizers form a set $\mathfrak M(c)$, which is contained in $\mathfrak M_L$. The {\sf Mather set} is defined by 
\[
\wt\cM(c):=\bigcup_{\mu\in\mathfrak M(c)}supp(\mu),
\]
which is graphic in the phase space:
\begin{thm}[Graphic \cite{Mat2}]
$\wt\cM(c)$ is a Lipschitz graph over the {\sf projected Mather set} $\cM(c):=\pi_x\wt\cM(c)$, i.e.
\[
\pi_x^{-1}:\cM(c)\rightarrow TM\times\T
\]
is a Lipschitz function.
\end{thm}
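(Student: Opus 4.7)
The plan is to reduce the statement to the fact that every point of $\wt\cM(c)$ lies on a globally calibrated (semi-static) orbit, and then deduce the graph and Lipschitz properties from a corner-rounding argument combined with the strict convexity of $L$ in the velocity. More precisely, I would first observe that if $(x_0, v_0, t_0) \in \wt\cM(c)$ and $\gamma$ is the projection to $M$ of the Euler--Lagrange orbit issued from $(x_0, v_0)$ at time $t_0$, then $\gamma$ is semi-static for the class $c$: for every $s < s'$, the modified action $\int_s^{s'} \bigl(L(\gamma,\dot\gamma,\tau) - \langle c,\dot\gamma\rangle\bigr)\, d\tau + \alpha(c)(s' - s)$ coincides with the corresponding $c$-\Mane\ potential between $(\gamma(s), s)$ and $(\gamma(s'), s')$. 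This follows directly from the definition of $\mathfrak{M}(c)$ as the set of minimisers in (\ref{eq:alpha}) together with Birkhoff's ergodic theorem, and it is essentially the only input from Mather theory I would use.

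Next I would prove the graph property by contradiction. Suppose $(x, v_1, t_0)$ and $(x, v_2, t_0)$ both lie in $\wt\cM(c)$ with $v_1 \neq v_2$, let $\gamma_1, \gamma_2$ be the corresponding semi-static orbits, and on a small symmetric interval $[t_0 - \delta, t_0 + \delta]$ form the two swapped concatenations $\eta_1$ (equal to $\gamma_1$ on $[t_0-\delta,t_0]$ and to $\gamma_2$ on $[t_0,t_0+\delta]$) and $\eta_2$ (the other way round). Each $\eta_i$ is continuous but has a genuine corner at $t_0$; summing the calibration identities for $\gamma_1, \gamma_2$ and comparing them with the $c$-\Mane\ potential along the $\eta_i$ gives
\[
A(\gamma_1) + A(\gamma_2) \geq h(\gamma_1(t_0 - \delta), \gamma_2(t_0 + \delta)) + h(\gamma_2(t_0 - \delta), \gamma_1(t_0 + \delta)),
\]
with equality only if both $\eta_i$ are themselves Tonelli minimisers. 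Since Tonelli minimisers are of class $C^1$ and each $\eta_i$ has a corner at $t_0$, the inequality is strict, which contradicts the semi-staticity of $\gamma_1, \gamma_2$.

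The hard part is upgrading this to Lipschitz regularity. My plan is to quantify the corner-cut: given $(x_1, v_1, t)$ and $(x_2, v_2, t)$ in $\wt\cM(c)$ with $x_1$ close to $x_2$, I would smooth the corners of the two swapped concatenations on a short interval of length $\delta$ centred at $t$ by linearly interpolating the velocity between $\dot\gamma_1(t)$ and $\dot\gamma_2(t)$. Expanding the modified action to second order and invoking the uniform bound $L_{vv} \geq \lambda I > 0$ produces a saving of order $\delta |v_1 - v_2|^2$ relative to the broken curves, while the calibration identities bound the total action defect from above by a quantity of order $|x_1 - x_2| \cdot |v_1 - v_2|$ plus lower-order corrections in $\delta$. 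Optimising over $\delta$ and combining the two bounds yields the desired estimate $|v_1 - v_2| \leq K |x_1 - x_2|$ with $K$ depending only on $L$, $c$, and the uniform velocity bound on the compact set $\wt\cM(c)$; securing a constant $K$ independent of the chosen pair of points in $\wt\cM(c)$ is where the argument requires the most care.
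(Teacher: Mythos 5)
The paper itself offers no proof here; it simply cites Mather \cite{Mat2}, so there is no ``paper proof'' to compare against. Your sketch follows the classical corner-rounding/crossing strategy that Mather uses, and the reduction of Step~1 (points of $\wt\cM(c)$ lie on semi-static orbits via Birkhoff and the definition of $\mathfrak M(c)$) and the quantitative Step~3 (second-order expansion exploiting $L_{vv}\ge\lambda I$, optimization in $\delta$) are both the right ideas.

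However, Step~2 has a genuine gap, and it propagates into Step~3. The inequality you display,
\[
A(\gamma_1)+A(\gamma_2)\;\ge\;h\bigl(\gamma_1(t_0-\delta),\gamma_2(t_0+\delta)\bigr)+h\bigl(\gamma_2(t_0-\delta),\gamma_1(t_0+\delta)\bigr),
\]
is the \emph{trivial} direction: the left side equals $A(\eta_1)+A(\eta_2)$, and each $A(\eta_i)$ dominates the corresponding minimal action $h$ by definition. The corner at $t_0$ then makes this strict. But a strict inequality in this direction contradicts \emph{nothing}: semi-staticity of $\gamma_i$ controls $A(\gamma_i)$ against the Ma\~n\'e potential between $\gamma_i$'s \emph{own} endpoints, not against the swapped endpoints $h(\gamma_1(t_0-\delta),\gamma_2(t_0+\delta))$, etc. Semi-staticity alone is in fact insufficient for the graph property on a general manifold $M$: the Ma\~n\'e set $\wt\cN(c)$ can fail to be a graph while consisting only of semi-static orbits. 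What you actually need is the complementary inequality
\[
A(\gamma_1)+A(\gamma_2)\;\le\;h\bigl(\gamma_1(t_0-\delta),\gamma_2(t_0+\delta)\bigr)+h\bigl(\gamma_2(t_0-\delta),\gamma_1(t_0+\delta)\bigr),
\]
and this comes from the fact that $\wt\cM(c)\subset\wt\cA(c)$ is calibrated by a \emph{common} weak KAM solution $u$: writing $A_c(\gamma_i)=u(\gamma_i(t_0+\delta))-u(\gamma_i(t_0-\delta))$ by calibration, regrouping the four boundary terms, and using $u\prec L_c+\alpha(c)$ (domination) on the swapped pairs produces the $\le$. The corner strict $>$ then contradicts this $\le$, and a quantitative version of the same bookkeeping (calibration minus domination is $\le 0$, corner-smoothing saving is $\ge c\,\delta|v_1-v_2|^2-C|x_1-x_2|\,|v_1-v_2|$) yields the Lipschitz estimate. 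Replace ``contradicts the semi-staticity'' by ``contradicts calibration by a common weak KAM solution (equivalently, the static/Aubry property),'' state the complementary $\le$ inequality explicitly, and also handle the case $t_1\ne t_2$ (reduce to equal times using the uniform velocity bound on the compact set $\wt\cM(c)$) and your outline closes up.
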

As the conjugation of $\alpha(c)$, we can define the {\sf Mather's Beta function} $\beta: H_1(M,\R)\rightarrow\R$ by
\be
\beta(h)=\inf_{\substack{\mu\in\mathfrak{M}_L, \\\rho(\mu)=h}}\int L\;d\mu
\ee
where $\rho(\mu)\in H_1(M,\R)$ is defined by 
\[
\langle [\lambda], \rho(\mu)\rangle:=\int\lambda\;d\mu,\quad\forall \text{\;closed 1-form \;}\lambda \text{\;on\;} M.
\]
Due to the positive definiteness assumption, both $\alpha(c)$ and $\beta(h)$ are convex and superlinear. Besides, 
\be\label{eq:ineq-a-b}
\langle c,h\rangle\leq\alpha(c)+\beta(h),\quad\forall c\in H^1(M,\R),\; h\in H_1(M,\R),
\ee
of which the equality holds only for $c\in D^-\beta(h)$ and $h\in D^-\alpha(c)$, namely $c$ is contained in the {\sf sub derivative set} of $\beta(h)$ and $h$ is contained in the sub derivative set of $\alpha(c)$.\\

Follow the setting of \cite{B}, for any $\gamma\in C^{ac}([t,t'],M)$ we have
\begin{equation}
A_c(\gamma)\big{|}_{[t,t']}=\int_t^{t'}L(\gamma(t),\dot{\gamma}(t),t)-\langle\eta_c(\gamma(t)),\dot{\gamma}(t)\rangle dt+\alpha(c)(t'-t)
\end{equation}
and the {\sf $c-$action function} 
\begin{equation}
h_c((x,t),(y,t'))=\inf_{\substack{\xi\in C^{ac}([t,t'],M)\\
\xi(t)=x\\
\xi(t')=y}}A_c(\xi)\big{|}_{[t,t']},
\end{equation}
where $t,t'\in\mathbb{R}$ with $t<t'$. Therefore, the {\sf Ma\~n\'e Potential function}
\begin{equation}
F_c((x,\tau),(y,\tau'))=\inf_{\substack{[t]=\tau\\
[t']=\tau'\\t\leq t'}}h_c((x,t),(y,t')),
\end{equation}
is well defined on $M\times\T\times M\times \T$. Here for any $t\in\R$, $[t]\in[0,1)$ is uniquely identified by $t\equiv [t]\ (mod\ 1)$. Furthermore, the {\sf Peierl's barrier function}
\be
h_c^\infty((x,\tau),(y,\tau'))=\liminf_{t'-t\rightarrow+\infty}h_c((x,t),(y,t'))
\ee
is also well defined, where $[t]=\tau,[t']=\tau'$.

\begin{defn}
An absolutely continuous curve $\gamma:\mathbb{R}\rightarrow M$ is called a {\sf c-semistatic} if 
\[
F_c((x,\tau),(y,\tau'))=A_c(\gamma)\big{|}_{[t,t']},
\]
for all $t,t'\in\mathbb{R}$ satisfying $[t]=\tau$ and $[t']=\tau'$. A $c-$semi static curve $\gamma$ is called {\sf c-static} if
\[
A_c(\gamma)\big{|}_{[t,t']}+F_c((\gamma(t'),[t']),(\gamma(t),[t]))=0,\quad\forall t<t'\in\mathbb{R}.
\]
\end{defn}
The {\sf Ma\~{n}\'e set} $\widetilde{\mathcal{N}}(c)\subset TM\times\T$ is defined by the set of all the c-semi static orbits, and the {\sf Aubry set} $\tilde{\mathcal{A}}(c)$ is the set of all the c-static orbits. 
\begin{thm}[Graphic II \cite{B}]
$\wt\cA(c)$ is a Lipschitz graph over the {\sf projected Aubry set} $\cA(c):=\pi_x\wt\cA(c)$, i.e.
\[
\pi_x^{-1}:\cA(c)\rightarrow TM\times\T
\]
is a Lipschitz function.
\end{thm}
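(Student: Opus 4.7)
The plan is to follow the Fathi weak KAM route and realise $\wt\cA(c)$ as the image of $\cA(c)$ under a Lipschitz map built from the gradient of a conjugate pair of weak KAM solutions.

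First I would fix $c\in H^1(M,\R)$ and select a conjugate pair $(u_c^-,u_c^+)$, consisting of a backward and a forward weak KAM solution of (\ref{eq:ehj}), normalised so that $u_c^-(x_0,t_0)=u_c^+(x_0,t_0)$ at some base point of $\cA(c)$. The standard facts I would invoke are that $u_c^-$ is semiconcave with linear modulus $K$, $u_c^+$ is semiconvex with linear modulus $K$, one has $u_c^-\geq u_c^+$ everywhere, and $\cA(c)$ is precisely the coincidence set $\{(x,t):u_c^-(x,t)=u_c^+(x,t)\}$.

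Second, I would verify that every $c$-static curve $\gamma$ is simultaneously calibrated by $u_c^-$ and $u_c^+$; this follows from the defining $c$-static identity $A_c(\gamma)|_{[t,t']}+F_c((\gamma(t'),[t']),(\gamma(t),[t]))=0$ together with the $\inf$/$\sup$ representations of $u_c^\pm$ via the Ma\~n\'e potential $F_c$ and the Peierls barrier $h_c^\infty$. Simultaneous calibration forces both $u_c^-$ and $u_c^+$ to be differentiable at every $(x,t)\in\cA(c)$ with a common gradient, satisfying
\[
c+\partial_x u_c^\pm(x,t)=L_v(x,\dot\gamma(t),t),\qquad \partial_t u_c^\pm(x,t)=-H(x,L_v(x,\dot\gamma(t),t),t)+\alpha(c),
\]
where $\gamma$ is any $c$-static orbit through $(x,t)$. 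In particular $\dot\gamma(t)$ depends only on $(x,t)\in\cA(c)$, so the candidate inverse $\pi_x^{-1}:\cA(c)\to TM\times\T$ is already single-valued.

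Third, from the coincidence $u_c^-=u_c^+$ on $\cA(c)$ together with the simultaneous semiconcavity/semiconvexity bounds I would derive, for any pair $(x_i,t_i)\in\cA(c)$,
\[
\big|du_c^-(x_1,t_1)-du_c^-(x_2,t_2)\big|\leq 2K\,\big|(x_1,t_1)-(x_2,t_2)\big|,
\]
by sandwiching the coincident values of $u_c^\pm$ between a semiconcave upper paraboloid at $(x_1,t_1)$ and a semiconvex lower paraboloid at $(x_2,t_2)$, and then exchanging the roles of the two points. Post-composing the resulting Lipschitz map $(x,t)\mapsto c+du_c^-(x,t)$ with the smooth Legendre transform $(x,p,t)\mapsto(x,H_p(x,p,t),t)$ then produces the desired Lipschitz parametrisation of $\wt\cA(c)$ over $\cA(c)$. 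The main obstacle is precisely this last Lipschitz gradient estimate: $\cA(c)$ is generally nowhere dense and non-convex, so the usual $C^{1,1}$ regularity theorem for open domains does not apply, and the sandwich argument has to be carried out on the universal cover in small Euclidean balls with the modulus $K$ tracked uniformly. This is the only step requiring real technical care, but it lies entirely within the standard semiconcavity toolbox.
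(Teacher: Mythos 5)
The paper offers no proof of this statement: it is cited verbatim from Bernard \cite{B}, so your proposal has to be judged against the literature rather than against anything in the text. With that understood, your route is the standard Fathi/weak-KAM proof of the graph property of the Aubry set and is essentially the same circle of ideas Bernard uses in the time-periodic setting: a conjugate pair $u_c^\pm$ of dominated functions with $u_c^-$ semiconcave, $u_c^+$ semiconvex, $u_c^-\ge u_c^+$, and $\cA(c)$ realised as their coincidence set, forcing differentiability there with a common gradient; then the Legendre transform sends this Lipschitz field of covectors to the velocity field of $\wt\cA(c)$. The outline is correct and complete.

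Your worry in the closing paragraph is, however, a red herring. The sandwich argument never uses a ball contained in $\cA(c)$, so nowhere-denseness and non-convexity of $\cA(c)$ are irrelevant: the test point in the one-sided Taylor inequalities ranges over a full neighborhood in $M\times\T$ (or the universal cover), not over $\cA(c)$. Concretely, write $z_i=(x_i,t_i)\in\cA(c)$ with common gradient $p_i=du_c^-(z_i)=du_c^+(z_i)$. Semiconcavity of $u_c^-$ at $z_1$ gives $u_c^-(y)\le u_c^-(z_1)+\langle p_1,y-z_1\rangle+K|y-z_1|^2$ for all $y$ near $z_1$, while semiconvexity of $u_c^+$ at $z_2$ gives $u_c^+(y)\ge u_c^+(z_2)+\langle p_2,y-z_2\rangle-K|y-z_2|^2$. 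Using $u_c^-\ge u_c^+$ and $u_c^-(z_i)=u_c^+(z_i)$ (plus semiconvexity of $u_c^+$ at $z_1$ to relate $u_c^-(z_1)$ to $u_c^+(z_2)$), and then evaluating at $y=z_2+s(p_2-p_1)$ and optimising over small $s>0$, one obtains $|p_1-p_2|\le CK\,|z_1-z_2|$ for a universal constant $C$ independent of the geometry of $\cA(c)$. So the step you flag as the main obstacle is actually routine; the genuinely nontrivial inputs are the two structural facts you invoked earlier, namely that $\cA(c)$ is the coincidence set of a conjugate pair and that every $c$-static curve is simultaneously calibrated by $u_c^-$ and $u_c^+$, both of which are standard in Fathi's framework.
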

\begin{thm}\cite{Mat3}
$(x,\tau)\in\cA(c)$ iff $h^\infty_c((x,\tau),(x,\tau))=0$.
\end{thm}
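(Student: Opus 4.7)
The plan is to prove the two implications separately by limiting arguments compatible with the time-$1$ periodicity of $L$, the compactness of Tonelli minimizers, and the duality between the Ma\~n\'e potential $F_c$ and the Peierl's barrier $h_c^\infty$.

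\emph{For the $(\Rightarrow)$ direction,} fix a $c$-static orbit $\gamma$ with $\gamma(t_0)=x$ and $[t_0]=\tau$. Combining the $c$-semistatic identity $A_c(\gamma)|_{[t_0,t_0+N]}=F_c((x,\tau),(\gamma(t_0+N),\tau))$ with the $c$-static identity $A_c(\gamma)|_{[t_0,t_0+N]}=-F_c((\gamma(t_0+N),\tau),(x,\tau))$ yields
\[
F_c((x,\tau),(\gamma(t_0+N),\tau))+F_c((\gamma(t_0+N),\tau),(x,\tau))=0
\]
for every $N\in\N$. I then concatenate two near-minimizers of these two Ma\~n\'e potentials at the common intermediate point $(\gamma(t_0+N),\tau)$ to produce a loop $(x,\tau)\mapsto(x,\tau)$ of arbitrarily long total time and $c$-action at most $\varepsilon$, so $h_c^\infty((x,\tau),(x,\tau))\leq 0$. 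Combining with the standard lower bound $h_c^\infty\geq 0$ (immediate from the variational definition of $\alpha(c)$) gives the equality.

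\emph{For the $(\Leftarrow)$ direction,} pick sequences $t_n<t_n'$ with $t_n'-t_n\to\infty$, $[t_n]=[t_n']=\tau$, and $h_c((x,t_n),(x,t_n'))\to 0$, and let $\gamma_n$ be the associated Tonelli minimizers. After shifting each interval by an integer (admissible by time-$1$ periodicity of $L$), $\gamma_n$ is re-parametrized on $[\tau,T_n]$ with $\gamma_n(\tau)=\gamma_n(T_n)=x$, $T_n\to+\infty$, and $A_c(\gamma_n)\to 0$. Tonelli's a priori velocity bound and Ascoli-Arzel\`a extract a subsequential uniform-on-compacts limit $\gamma_\infty:[\tau,\infty)\to M$ solving Euler-Lagrange, with $\gamma_\infty(\tau)=x$. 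For any $\tau<s<t$, splitting $A_c(\gamma_n)|_{[\tau,T_n]}$ at $s,t$ and bounding the outer pieces below by $F_c$ gives in the limit
\[
0\geq F_c((x,\tau),(\gamma_\infty(s),[s]))+A_c(\gamma_\infty)|_{[s,t]}+F_c((\gamma_\infty(t),[t]),(x,\tau)),
\]
and combined with the triangle inequality for $F_c$ and $F_c((x,\tau),(x,\tau))\geq 0$ this forces the $c$-semistatic identity for $\gamma_\infty$. To upgrade to $c$-static, I build the wrap-around candidate $\delta_n$ that follows $\gamma_n$ on $[t,T_n]$ and then the integer-translated copy $\gamma_n(\,\cdot\,-(T_n-\tau))$ on $[T_n,T_n+s-\tau]$; its $c$-action equals $A_c(\gamma_n)|_{[\tau,T_n]}-A_c(\gamma_n)|_{[s,t]}\to -A_c(\gamma_\infty)|_{[s,t]}$, whence $F_c((\gamma_\infty(t),[t]),(\gamma_\infty(s),[s]))+A_c(\gamma_\infty)|_{[s,t]}\leq 0$. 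The reverse inequality follows from the triangle argument, giving the full $c$-static identity; the orbit extends uniquely to $\R$ via Euler-Lagrange, and the closedness and $\phi_L^r$-invariance of $\wt\cA(c)$ promote the one-sided static property to the bi-infinite one, so $(x,\tau)\in\cA(c)$.

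The main obstacle is the wrap-around construction in the backward direction: turning the $c$-semistatic identity into the full $c$-static one requires that the loop-closing cost of $\gamma_n$ around the full circuit vanish in the limit, which is precisely what the hypothesis $h_c^\infty((x,\tau),(x,\tau))=0$ guarantees; identifying the correct wrap-around curve and bookkeeping the time-$1$-periodic $c$-actions is the delicate technical step.
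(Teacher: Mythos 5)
The paper gives no proof of this theorem; it is cited directly from Mather \cite{Mat3}, so there is no in-house argument to compare against. Judged on its own, your $(\Rightarrow)$ direction is correct: adding the semistatic and static identities for $\gamma|_{[t_0,t_0+N]}$ gives $F_c((x,\tau),(\gamma(t_0+N),\tau))+F_c((\gamma(t_0+N),\tau),(x,\tau))=0$, and concatenating the (length-$N$) segment $\gamma|_{[t_0,t_0+N]}$ with a near-minimizer of the return yields loops of arbitrarily long integer time and action $\le\varepsilon$, so $h_c^\infty((x,\tau),(x,\tau))\le 0$; with the standard lower bound, equality follows. (The lower bounds $h_c^\infty\ge 0$ and $F_c\ge 0$ on the diagonal deserve a word — they follow by concatenation and the definition of $\alpha(c)$ — but invoking them as standard is acceptable.)

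The $(\Leftarrow)$ direction, however, has a genuine gap in the last step. You construct a forward limit $\gamma_\infty:[\tau,\infty)\to M$ and correctly show it satisfies both the $c$-semistatic and $c$-static identities on $[\tau,\infty)$ — the wrap-around argument is exactly right. But the paper's definition of $\widetilde{\mathcal A}(c)$ requires a $c$-static curve defined on \emph{all of} $\R$, and your final sentence (``the closedness and $\phi_L^r$-invariance of $\widetilde{\mathcal A}(c)$ promote the one-sided static property to the bi-infinite one'') is not a valid deduction. A half-line being $c$-static on $[\tau,\infty)$ does not, by itself, place any point of it in $\widetilde{\mathcal A}(c)$: to invoke invariance of $\widetilde{\mathcal A}(c)$ you would first need some point of the orbit already known to lie in $\widetilde{\mathcal A}(c)$, which is precisely what you are trying to establish. (One can show the $\omega$-limit set of a forward semistatic curve is in $\widetilde{\mathcal A}(c)$, but transferring this back to the initial point reduces to a triangle-inequality computation that merely reproduces the hypothesis $h_c^\infty((x,\tau),(x,\tau))=0$ — it is circular.) The clean fix is to extract a \emph{bi-infinite} limit from the outset: since $T_n-\tau\in\N$ and $L$ is $1$-periodic in $t$, the $(T_n-\tau)$-periodic extensions $\hat\gamma_n:\R\to M$ are admissible; a Tonelli velocity bound and Ascoli-Arzel\`a give a locally uniform limit $\xi:\R\to M$ with $\xi(\tau)=x$. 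Your same split-and-wrap-around estimates, applied now to intervals $[s,t]$ with $s<\tau<t$, give $A_c(\xi)|_{[s,t]}+F_c((\xi(t),[t]),(\xi(s),[s]))\le 0$ and, via the triangle inequality, the semistatic identity $A_c(\xi)|_{[s,t]}=F_c((\xi(s),[s]),(\xi(t),[t]))$; this forces $\xi|_{[s,t]}$ to be a Tonelli minimizer, hence $C^2$, which eliminates the potential corner of $\xi$ at $\tau$. This produces the bi-infinite $c$-static orbit through $(x,\tau)$ and completes the implication.
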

\begin{defn}
The {\sf Aubry class} is defined by the element in the quotient space of $\cA(c)$ w.r.t. the following metric:
\[
d_c(\cdot,\cdot):\Big(\cA(c)\cap\{t=0\}\Big)\times \Big(\cA(c)\cap\{t=0\}\Big)\rightarrow\R
\]
via
\be\label{eq:aubry-clas}
 d_c(x,y)=h_c^\infty((x,0),(y,0))+h_c^\infty((y,0),(x,0))
\ee
for any $x,y\in\cA(c)\cap\{t=0\}$. Let's denote the quotient space by $\cA(c)/d_c$.
\end{defn}
\begin{thm}\cite{Mat3}
Restricted on $\cA(c)$, $d_c(\cdot,\cdot)\lesssim d(\cdot,\cdot)^2$, where $d(\cdot,\cdot)$ is the Euclid metric.
\end{thm}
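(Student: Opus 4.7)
The plan is to bound $d_c(x,y)$ via three ingredients: the representation of the Peierls barrier as a supremum of backward weak KAM solutions, the semi-concavity of those solutions, and the Lipschitz-graph regularity of the momentum map on $\cA(c)$. Once these pieces are in place, the conclusion is purely algebraic.

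Concretely, for $x\in\cA(c)$ one has the identity
\[
h_c^\infty((x,0),(y,0))=\sup_{u^-}\bigl(u^-(y,0)-u^-(x,0)\bigr),
\]
the supremum ranging over backward weak KAM solutions $u^-$ of (\ref{eq:ehj}) and being realized at $u^-:=h_c^\infty((x,0),\cdot)$. Each such $u^-(\cdot,0)$ is semi-concave on $M$ with a linear modulus $C$ depending only on $L$, and at every Aubry point $z$ the super-differential $D_x^+u^-(z,0)$ collapses to the single vector $p_z-c$, where $p_z:=L_v(z,v_z,0)$ is the Aubry momentum provided uniquely by the Graphic II theorem. Semi-concavity at the differentiable point $x$ then gives, for every $u^-$ and every $y\in M$,
\[
u^-(y,0)-u^-(x,0)\le(p_x-c)\cdot(y-x)+C|y-x|^2,
\]
and taking the supremum on the left yields the same upper bound for $h_c^\infty((x,0),(y,0))$. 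Reversing the roles of $x$ and $y$ produces the analogous estimate for $h_c^\infty((y,0),(x,0))$, and summing via (\ref{eq:aubry-clas}) yields
\[
d_c(x,y)\le(p_x-p_y)\cdot(y-x)+2C|y-x|^2.
\]
The Lipschitz-graph property of $z\mapsto p_z$ on $\cA(c)$ then bounds $|p_x-p_y|\lesssim d(x,y)$, and Cauchy--Schwarz reduces the bilinear term to $O(d(x,y)^2)$, giving $d_c(x,y)\lesssim d(x,y)^2$ as claimed.

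The hardest step is justifying the two structural claims: the \emph{uniform} semi-concavity of the family $\{u^-\}$ with a constant depending only on $L$, and the identification $D_x^+u^-(x,0)=\{p_x-c\}$ at every Aubry point $x$ independently of the backward weak KAM solution one picks. The first is a standard Lax--Oleinik estimate using smoothness and convexity of $L$. The second is the decisive input, since it ensures that the linear term in the semi-concave expansion is the same across all $u^-$ and so is preserved under the supremum — without this uniformity the argument would leave a first-order residual in $y-x$. It follows in turn from the uniqueness of the calibrated orbit through each point of $\cA(c)$, namely the semi-static orbit $\gamma_x$ supplied by Graphic II, which forces every backward weak KAM solution to record the same momentum $p_x$ at $x$.
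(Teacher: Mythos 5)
The paper does not prove this statement --- it is quoted as a cited result of Mather \cite{Mat3} --- so there is no in-paper argument to compare against; the only question is whether your proposal stands on its own, and it does.

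Your three ingredients are all available in the background recalled in Section~\ref{s2}: (i) $h_c^\infty((x,0),\cdot)$ is itself a backward weak KAM solution (Proposition~\ref{prop:tw-ma}, item (4)), hence semi-concave in $x$ with a linear modulus whose constant can be taken uniform in the base point $x$, by the cited Theorem 6.4.1 of \cite{CS} together with the uniform Lipschitz bound $K_c$ of Proposition~\ref{prop:tw-ma}; (ii) weak KAM solutions are differentiable on $\cN(c)\supset\cA(c)$ (item (3) of the same proposition), and at an Aubry point the derivative is pinned down by the Aubry graph, hence is independent of which backward solution one picks; (iii) the Lipschitz graph theorem (Graphic II). Applying the semi-concavity inequality at $x$ to $u^-:=h_c^\infty((x,0),\cdot)$ gives $h_c^\infty((x,0),(y,0))\leq(p_x-c)\cdot(y-x)+C\,d(x,y)^2$, symmetrically for $h_c^\infty((y,0),(x,0))$, and adding via (\ref{eq:aubry-clas}) leaves the cross term $(p_x-p_y)\cdot(y-x)$ plus $O(d(x,y)^2)$, which Graphic II plus Cauchy--Schwarz absorbs. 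The supremum representation you open with is correct but not actually load-bearing --- taking the specific barrier solution at each endpoint already does the job --- so the uniformity concern you flag, while genuine and indeed a standard fact of Fathi's theory, is not where the proof could break.

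One remark worth recording: nothing in your argument uses $M=\T$, so it proves $d_c\lesssim d^2$ on $\cA(c)$ for Tonelli systems on any closed manifold. Mather's original derivation in \cite{Mat3} is phrased in the twist-map language of minimal configurations and curve exchange; your semi-concavity packaging is the cleaner, dimension-independent route, and is essentially how this quadratic estimate is obtained in the modern weak KAM literature.
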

From previous definitions we can easily see that 
\[
\widetilde{\mathcal{M}}(c)\subset\tilde{\mathcal{A}}(c)\subset\widetilde{\mathcal{N}}(c),
\]
which are all closed invariant set in $TM\times\T$.
\begin{rmk}
In \cite{Z3}, the author gave an example which shows previous 3 sets could be different. Besides, \cite{Z3} also showed that for generic Lagrangians, these 3 sets keep the same for generic $c\in H^1(M,\R)$.
\end{rmk}
\begin{lem}[Lemma 2.4 of \cite{CY1}] \label{lem:upper-semi}
The set-valued function 
\[
\Big\{c\in H^1(M,\R),|\cdot|\Big\}\longrightarrow\Big\{\wt\cN(c)\subset TM\times\T,d_H(\cdot,\cdot)\Big\}
\]
is upper semi-continuous w.r.t. previously given metric. Here $|\cdot|$ is the Euclid norm and $d_H(\cdot,\cdot)$ is the Hausdorff metric.
\end{lem}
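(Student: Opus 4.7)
The plan is to argue by contradiction using sequential compactness of $\wt\cN(c_n)$ and a one-sided limiting argument for the Ma\~n\'e potential. Assume upper semi-continuity fails at some $c_0\in H^1(M,\R)$: there exist $\varepsilon>0$, $c_n\to c_0$, and points $z_n=(x_n,v_n,t_n)\in\wt\cN(c_n)$ with $\dist(z_n,\wt\cN(c_0))\geq\varepsilon$. Since the Euler-Lagrange flow of $L_c=L-\eta_c\cdot v$ is $c$-independent (the closed $1$-form is a null Lagrangian) and $\alpha(c)$ is locally bounded by convexity, a standard Tonelli argument produces a uniform velocity bound $|v_n|\leq K$ for all $c_n$ near $c_0$. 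Compactness of $M\times\T$ then permits extraction of a subsequence with $z_n\to z_\infty=(x_\infty,v_\infty,t_\infty)$.

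Let $\gamma_\infty$ be the Euler-Lagrange trajectory through $z_\infty$ and $\gamma_n$ the $c_n$-semi-static orbit through $z_n$; continuous dependence of the flow on initial data gives $\gamma_n\to\gamma_\infty$ in $C^1_{\mathrm{loc}}(\R,M)$. The aim is to show $\gamma_\infty\in\wt\cN(c_0)$, contradicting the separation $\dist(z_\infty,\wt\cN(c_0))\geq\varepsilon$. For each $t<t'$ the $c_n$-semi-static identity gives
$$A_{c_n}(\gamma_n)\big|_{[t,t']}=F_{c_n}\bigl((\gamma_n(t),[t]),(\gamma_n(t'),[t'])\bigr),$$
whose left-hand side tends to $A_{c_0}(\gamma_\infty)|_{[t,t']}$ by $C^1$ convergence and joint continuity of $L_c$ in $(v,c)$. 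Since $\gamma_\infty$ is itself admissible in the infimum on the right, $F_{c_0}\leq A_{c_0}(\gamma_\infty)|_{[t,t']}$ is automatic, so only the reverse inequality remains.

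To secure $A_{c_0}(\gamma_\infty)|_{[t,t']}\leq F_{c_0}\bigl((\gamma_\infty(t),[t]),(\gamma_\infty(t'),[t'])\bigr)$, I would take any near-minimizer $\xi$ for the limit Ma\~n\'e potential and construct a competitor $\tilde\xi_n$ for $F_{c_n}$ by concatenating $\xi$ with two short Tonelli-minimal bridges of length $\delta_n\to 0$ that move its endpoints to $\gamma_n(t),\gamma_n(t')$. Superlinearity and $C^2$-regularity of $L$, together with $|\gamma_n(t)-\gamma_\infty(t)|\to 0$ and $|\gamma_n(t')-\gamma_\infty(t')|\to 0$, permit a choice of $\delta_n$ for which the bridging action vanishes in the limit. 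The $c_n$-semi-static property then gives $A_{c_n}(\gamma_n)|_{[t,t']}\leq A_{c_n}(\tilde\xi_n)$; passing $n\to\infty$ yields $A_{c_0}(\gamma_\infty)|_{[t,t']}\leq A_{c_0}(\xi)$, and taking the infimum over $\xi$ completes the argument.

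The main obstacle is precisely this endpoint-adjustment step: the infimum defining $F_c$ is taken over arbitrarily long connecting times, so one cannot naively commute it with $n\to\infty$. The key observations making the bridging succeed are the linear-in-$c$ structure of the action $A_c=A-\int\eta_c\cdot\dot\gamma\,dt+\alpha(c)(t'-t)$ (giving continuity in $c$ on bounded time intervals) and the uniform velocity bound on both $\gamma_n$ and on the chosen competitors (giving compactness and controlling the bridge actions). Because only a one-sided bound on $F_{c_n}$ is needed for the contradiction, a full two-sided continuity statement for $F_c$ in $c$ can be avoided, which is what makes the argument tractable.
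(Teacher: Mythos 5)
The paper cites this lemma from Cheng--Yan and does not supply its own proof, so your argument stands or falls on its own. Your overall structure is the standard and correct one: sequential compactness plus a uniform velocity bound, $C^1_{\mathrm{loc}}$ convergence of the orbits $\gamma_n\to\gamma_\infty$ via the $c$-independence of the Euler--Lagrange flow, the trivial inequality $F_{c_0}\leq A_{c_0}(\gamma_\infty)$ from admissibility, and a one-sided passage to the limit using admissible competitors to close the argument. This is sound in outline.

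The one genuine gap is in the bridging step. The Ma\~n\'e potential $F_c((x,\tau),(y,\tau'))$ ranges only over connecting times $t\leq t'$ with $[t]=\tau$ and $[t']=\tau'$; that is, the duration $t'-t$ is constrained to a fixed congruence class modulo $1$. Prepending and appending two bridges of duration $\delta_n\to 0$ to $\xi$ changes the total duration by $2\delta_n$, so the concatenated curve $\tilde\xi_n$ is \emph{not} an admissible competitor for $F_{c_n}\bigl((\gamma_n(t),[t]),(\gamma_n(t'),[t'])\bigr)$ unless $\delta_n\in\Z$ --- and integer-time bridges do not have vanishing action, since they converge to a self-connection cost $h_{c_0}\bigl((\gamma_\infty(t),\cdot),(\gamma_\infty(t),\cdot)\bigr)\geq 0$ which need not be zero off the Aubry set. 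As written, the comparison $A_{c_n}(\gamma_n)\leq A_{c_n}(\tilde\xi_n)$ is not justified. The fix preserves the spirit of your argument: either bridge \emph{inside} the original time window, replacing $\xi|_{[s,s+\delta_n]}$ and $\xi|_{[s'-\delta_n,s']}$ by Tonelli segments connecting $\gamma_n(t)$ to $\xi(s+\delta_n)$ and $\xi(s'-\delta_n)$ to $\gamma_n(t')$ respectively, with $\delta_n$ chosen to tend to $0$ while remaining of order at least $\max\bigl(|\gamma_n(t)-\gamma_\infty(t)|,|\gamma_n(t')-\gamma_\infty(t')|\bigr)$ so the bridges have bounded speed and vanishing action; or, more simply, avoid bridges by deforming $\xi$ near its endpoints within the fixed interval $[s,s']$, say
\[
\xi_n(u)=\xi(u)+\phi(u)\bigl(\gamma_n(t)-\xi(s)\bigr)+\psi(u)\bigl(\gamma_n(t')-\xi(s')\bigr)
\]
with cutoff functions $\phi,\psi$, giving $\xi_n\to\xi$ in $C^1$ and hence $A_{c_n}(\xi_n)\to A_{c_0}(\xi)$. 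With either repair, the rest of your argument closes cleanly.
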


\vspace{10pt}

\subsection{weak KAM solutions} Following the setting of Fathi in \cite{F}, we have:
\begin{defn}
A function $u:M\times\T\rightarrow\R$ is called {\sf dominated function} and is denoted by $u\prec L_c+\alpha(c)$, if $u(y,[t'])-u(x,[t])\leq F_c((x,[t]),(y,[t']))$ for all $(x,t,y,t')\in M\times\R\times M\times\R$. A curve $\gamma:(-\infty,s]\rightarrow M$ is called {\sf backward calibrated}, if 
\be\label{eq:back-cal}
u(\gamma(t'),[t'])-u(\gamma(t),[t])=A_c(\gamma)\big|_{[t,t']}
\ee
for all $t\leq t'\leq s$.
\end{defn}

\begin{defn}
A function  $u_c^-:M\times\T\rightarrow\R$ is called a {\sf backward weak KAM solution} of Hamiltonian $H(x,p,t)$, if 
\begin{itemize}
\item $u_c^-\prec L_c+\alpha(c)$;
\item $\forall (x,s)\in M\times\T$, there exists a calibrated curve $\gamma:(-\infty,s]\rightarrow M$ of $u_c^-(\cdot)$ ending with it.
\end{itemize}
\end{defn}
Here we exhibit a list of properties the weak KAM solutions possess, which are directly cited from \cite{C}:
\begin{prop}[Theorem 5, 9 of \cite{C}]\label{prop:tw-ma}
\begin{enumerate}
\item $u_c^-$ is a weak solution of $H(x,p,t)$, i.e.
\be\label{eq:ehj-gene}
\partial_t u_c^-+H(x,\partial_x u_c^-+c,t)=\alpha(c),\quad a.e.\ (x,t)\in M\times\T;
\ee
\item For each $c \in H^1(M,\R)$, $u_c^-$ is $K_c-$Lipshitz on $M\times\T$.
\item $u_c^-$ is differentiable at $\mathcal N(c)$;
\item for any $(z,[s])\in M\times\T$, the function 
\[
h_c^\infty((z,[s]),(\cdot,\cdot)):M\times\T\rightarrow\R
\]
 is a weak KAM solution.
\end{enumerate}
\end{prop}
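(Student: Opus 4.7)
The four items are interrelated, so my plan is to prove them in the order (2), (1), (3), (4). Lipschitz regularity feeds the a.e.\ PDE, the PDE supports the differentiability statement, and both are needed for a clean analysis of the Peierl's barrier.

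For item (2), I would use Tonelli's hypotheses to bound $|L|$ on any compact velocity set. For close points $(x,t), (y,t')$, the straight segment joining them has controlled $c$-action, so the dominated property gives $u_c^-(y,t') - u_c^-(x,t) \leq C(|x-y| + |t'-t|)$. The reverse direction comes from concatenating a backward calibrated curve ending at $(y,t')$ with a short connecting segment from $(x,t)$ and invoking domination again. This produces a uniform constant $K_c$ depending only on compact bounds of $L$ and on $|c|$. For item (1), Rademacher yields differentiability a.e.\ under the Lipschitz bound. At such a point $(x,t)$, evaluating domination along outgoing rays $\tau \mapsto x + \tau v$ and letting $\tau \to 0^+$ gives $\partial_t u_c^- + \langle \partial_x u_c^- + c, v\rangle - L(x,v,t) \leq \alpha(c)$; taking $\sup_v$ and applying Fenchel duality between $L$ and $H$ produces $\partial_t u_c^- + H(x,\partial_x u_c^- + c,t) \leq \alpha(c)$. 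Equality comes from differentiating the backward calibration identity along the incoming curve $\gamma$ through $(x,t)$: at $\tau = t$ the velocity $\dot\gamma(t)$ realizes the Fenchel supremum, forcing the reverse inequality. For item (3), a point $(x_0,t_0) \in \cN(c)$ lies on a semistatic orbit $\gamma$, and semistaticity means $\gamma$ is calibrated both backward and forward from $(x_0,t_0)$. Each one-sided calibration pins $u_c^-$ from one side against a smooth action functional, producing a super-gradient and a sub-gradient at $(x_0,t_0)$ that both coincide with $L_v(x_0,\dot\gamma(t_0),t_0) - c$; hence $u_c^-$ is differentiable there.

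For item (4), I would check both axioms for $v(y,\tau') := h_c^\infty((z,[s]),(y,\tau'))$. The domination $v \prec L_c + \alpha(c)$ reduces to the concatenation inequality for $h_c$ followed by passing to the $\liminf$. The delicate point is the existence of a backward calibrated curve at each $(y,\tau')$: select $t_n \to -\infty$ with $[t_n]=[s]$ together with near-minimizers $\gamma_n : [t_n, t'] \to M$ joining $z$ to $y$ whose actions realize $v(y,\tau')$, where $[t']=\tau'$. On every compact subinterval of $(-\infty, t']$ the Tonelli a priori bounds control $|\dot\gamma_n|$ uniformly, so Arzel\`a--Ascoli together with a diagonal extraction yields a limit curve $\gamma_\infty$ defined on all of $(-\infty, t']$ and ending at $y$. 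Lower semicontinuity of the action combined with the domination already established forces $\gamma_\infty$ to be calibrated for $v$ on every finite subinterval.

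The main obstacle is the compactness argument in (4): one must certify that the diagonal subsequence actually attains the $\liminf$ defining $h_c^\infty$, and that no action escapes at $-\infty$. The key observation making this go through is that any prefix of a near-minimizer is itself a near-minimizer of the corresponding fixed-endpoint problem, up to a controlled tail. Propagating this uniformity through Arzel\`a--Ascoli, rather than relying on a bare weak-limit argument, is where the completeness of the Euler--Lagrange flow and Tonelli superlinearity do the real work.
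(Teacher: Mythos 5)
The paper does not actually prove this Proposition; it is cited verbatim as Theorem 5 and Theorem 9 of Contreras--Iturriaga--Morgado \cite{C}, so there is no internal argument to compare against. Judged on its own terms, your sketch for items (1), (2) and (4) is sound and follows the standard weak KAM lines: the Lipschitz bound from domination plus a short connecting arc, the a.e.\ PDE from Rademacher plus Fenchel duality and calibration, and the Peierl's barrier being a weak KAM solution via a diagonal Tonelli--Arzel\`a--Ascoli extraction. The last of these does need the care you flag, namely keeping track of the tail $h_c((z,t_n),(\gamma_n(\tau),\tau))$ along the diagonal subsequence and invoking the Lipschitz continuity of $h_c$ to pass from $\gamma_n(\tau)$ to $\gamma_\infty(\tau)$, but you have correctly identified that as the crux.

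Item (3) as written has a genuine gap. You assert that ``semistaticity means $\gamma$ is calibrated both backward and forward'' by $u_c^-$, but that is precisely what needs proof, and it is not a consequence of semistaticity alone. Domination only gives the one-sided inequality $u_c^-(\gamma(t'),[t'])-u_c^-(\gamma(t),[t])\leq A_c(\gamma)\big|_{[t,t']}$. The matching lower bound $u_c^-(\gamma(t),[t])-u_c^-(\gamma(t'),[t'])\leq F_c((\gamma(t'),[t']),(\gamma(t),[t]))$ turns this into an equality only when $F_c((\gamma(t),[t]),(\gamma(t'),[t']))+F_c((\gamma(t'),[t']),(\gamma(t),[t]))=0$, i.e.\ only when $\gamma$ is \emph{static}. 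So your argument, run honestly, establishes two-sided calibration and hence differentiability on $\cA(c)$, not on all of $\cN(c)$. To cover the Ma\~n\'e set you would need a different mechanism --- for instance showing that the backward calibrated curve ending at a point of $\cN(c)$ is unique (this is where the graph structure of $\wt\cN(c)$ or the conjugate-pair characterization of semistatic orbits via $(u^-,u^+)$ enters), rather than asserting a forward calibration that semistaticity does not by itself provide.
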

\begin{defn}\cite{F}
A function $u:M\times\T\rightarrow\R$ is a {\sf viscous subsolution} of (\ref{eq:ehj-gene}), if for every $C^1-$function $\phi:M\times\T \rightarrow \R$ and every point $(x_0,\tau) \in M\times\T$ such that $u-\phi$ has a maximum at this point, and 
\[
\partial_t\phi(x_0,\tau)+H(x_0,\partial_x\phi(x_0,\tau)+c,\tau)\leq\alpha(c).
\]
A function $u:M\times\T\rightarrow\R$ is a {\sf viscous supersolution} of (\ref{eq:ehj-gene}), if for every $C^1-$function $\psi:M\times\T \rightarrow \R$ and every point $(x_0,\tau) \in M\times\T$ such that $u-\psi$ has a minimum at this point, and 
\[
\partial_t\phi(x_0,\tau)+H(x_0,\partial_x\phi(x_0,\tau)+c,\tau)\geq\alpha(c).
\]
A function $u:M\times\T\rightarrow\R$ is a {\sf viscous supersolution} of (\ref{eq:ehj-gene}), if it's both a viscous subsolution and a viscous supersolution.
\end{defn}

\begin{prop}[Proposition 3.12 of \cite{WY}]
The weak KAM solution $u_c^-$ is a viscous solution of system $H(x,p,t)$ and vice versa.
\end{prop}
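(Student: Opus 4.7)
The plan is to prove the two implications separately. For ``weak KAM $\Rightarrow$ viscous'' I would extract the subsolution inequality from the domination $u_c^-\prec L_c+\alpha(c)$ and the supersolution inequality from the existence of backward calibrated curves. For the subsolution part, let $\phi\in C^1(M\times\T)$ and suppose $(x_0,\tau)$ is a local maximum of $u_c^--\phi$. Given any $v\in T_{x_0}M$, I would run the straight line $\gamma_v(s)=x_0+(s-\tau)v$ \emph{backward} from $(x_0,\tau)$ on $[\tau-h,\tau]$ (in a chart). Domination yields
\[
u_c^-(x_0,\tau)-u_c^-(x_0-hv,[\tau-h])\leq\int_{\tau-h}^{\tau}\bigl(L(\gamma_v,v,\sigma)-\langle c,v\rangle\bigr)\,d\sigma+\alpha(c)h,
\]
while the local-maximum condition rearranges to $\phi(x_0,\tau)-\phi(x_0-hv,[\tau-h])\leq u_c^-(x_0,\tau)-u_c^-(x_0-hv,[\tau-h])$. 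Chaining the two bounds, dividing by $h$, letting $h\to 0^+$, and taking the supremum over $v$ with the Legendre--Fenchel duality produces $\partial_t\phi(x_0,\tau)+H(x_0,\partial_x\phi+c,\tau)\leq\alpha(c)$.

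For the supersolution part, let $\psi\in C^1(M\times\T)$ and let $(x_0,\tau)$ be a local minimum of $u_c^--\psi$. By the weak-KAM definition there is a backward calibrated curve $\gamma:(-\infty,\tau]\to M$ with $\gamma(\tau)=x_0$ satisfying (\ref{eq:back-cal}). Combining (\ref{eq:back-cal}) with the minimality condition yields, for $t<\tau$ close to $\tau$,
\[
\int_t^\tau\bigl(L(\gamma,\dot\gamma,\sigma)-\langle c,\dot\gamma\rangle\bigr)\,d\sigma+\alpha(c)(\tau-t)\leq\psi(x_0,\tau)-\psi(\gamma(t),[t]).
\]
Dividing by $\tau-t$, sending $t\to\tau^-$, and using the pointwise Fenchel bound $\langle\partial_x\psi+c,\dot\gamma(\tau)\rangle-L(x_0,\dot\gamma(\tau),\tau)\leq H(x_0,\partial_x\psi+c,\tau)$ then produces $\partial_t\psi(x_0,\tau)+H(x_0,\partial_x\psi+c,\tau)\geq\alpha(c)$.

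For the converse ``viscous $\Rightarrow$ weak KAM'' two things must be shown: (a) the domination $u\prec L_c+\alpha(c)$, and (b) the existence of a backward calibrated curve at every point. For (a), since a viscous subsolution of our equation is locally Lipschitz by the coercivity of $H$, it is differentiable almost everywhere, and the test-function inequality can be upgraded to $\partial_t u+H(x,\partial_x u+c,t)\leq\alpha(c)$ at every differentiability point. Integrating the Fenchel inequality $\langle\partial_x u+c,\dot\eta\rangle-L(\eta,\dot\eta,t)\leq H(\eta,\partial_x u+c,t)$ along an arbitrary absolutely continuous $\eta:[t,t']\to M$ then produces
\[
u(\eta(t'),[t'])-u(\eta(t),[t])\leq\int_t^{t'}L_c(\eta,\dot\eta,\sigma)\,d\sigma+\alpha(c)(t'-t),
\]
which is precisely domination. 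For (b) I would use the dynamic-programming representation
\[
u(x_0,\tau)=\inf_{\substack{\eta\in C^{ac}([t,\tau],M)\\ \eta(\tau)=x_0}}\bigl[u(\eta(t),[t])+A_c(\eta)\big|_{[t,\tau]}\bigr]
\]
valid for viscous solutions, apply Tonelli's existence theorem on each finite horizon $[t,\tau]$, and then diagonal-extract as $t\to-\infty$ to assemble an infinite-horizon minimizer, which is the required backward calibrated curve.

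The principal obstacle is the converse direction, specifically the upgrade from \emph{pointwise} viscosity inequalities to the \emph{global} statements of domination and infinite-horizon calibration. The integration argument for domination relies on the standard but nontrivial equivalence, for Lipschitz functions, of the viscosity subsolution property with the a.e.\ PDE inequality; while the extraction of an infinite-horizon minimizer depends crucially on the completeness assumption for the Euler--Lagrange flow stated in Section~\ref{s2} together with a priori compactness of finite-horizon minimizers to rule out escape to infinity.
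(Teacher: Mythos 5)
The paper supplies no proof: this proposition is a verbatim citation of Proposition~3.12 of \cite{WY}, so there is nothing in the source to compare against and your sketch has to be judged on its own. The forward direction (weak KAM $\Rightarrow$ viscous) is correct and is the standard argument: testing domination against short backward line segments and then applying Legendre--Fenchel duality gives the subsolution inequality, and differentiating the calibration identity at the right endpoint of the backward calibrated curve gives the supersolution inequality. The only implicit step is the existence of $\dot\gamma(\tau^-)$, which holds because backward calibrated curves are $C^2$ extremals by Tonelli regularity, so no real gap there.

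The converse is where the nontrivial content lies, and your outline has two genuine gaps, only the first of which you acknowledge. In the domination step you integrate the pointwise Fenchel inequality along an arbitrary absolutely continuous $\eta$, but this requires the chain rule $\frac{d}{dt}u(\eta(t),t)=\partial_t u+\langle\partial_x u,\dot\eta\rangle$ to hold for a.e.\ $t$, and for a merely Lipschitz (or semiconcave) $u$ this can fail along a \emph{given} curve: the null set of non-differentiability of $u$ may contain the graph of $\eta$ over a positive-measure set of times. The standard repair is to mollify $u$ in $x$ (Jensen's inequality and convexity of $H$ in $p$ preserve the subsolution inequality up to $o(1)$) or to run a Fubini argument over translates of $\eta$; one of these needs to be stated, not presumed. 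The second gap is the dynamic-programming representation you invoke in (b): the assertion that a viscosity solution of the static evolutionary equation is a fixed point of a Lax--Oleinik type evolution is precisely the nontrivial content of \cite{WY}, proved there via a parametrized Lax--Oleinik operator together with a comparison/uniqueness theorem. Treating it as ``valid for viscous solutions'' is acceptable in an outline, but you should record that this is where convexity, coercivity, and the time-periodic structure of $H$ enter essentially; without it the extraction of a backward calibrated curve does not even begin.
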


\begin{defn}\cite{CS}
A function $f:\cU\subset\R^n\rightarrow\R$ is called {\sf semiconcave with linear modulus} (SCL for short), if there exists $C>0$ such that 
\be\label{eq:scl-defn}
f(x+h)+f(x-h)-2f(x)\leq C|h|^2
\ee
for all $x,h\in\R^n$ satisfying $B(x,h)\subset\cU$. Here the $C$ is called a {\sf semiconcavity constant} of $f$.
\end{defn}

\begin{defn}
For any SCL $f:\cU\rightarrow\R$, the {\sf super derivative set}  is defined by 
\[
D^+f(x):=\Big\{p\in\R^n\Big|\limsup_{y\rightarrow x}\frac{f(y)-f(x)-\langle p,y-x\rangle}{|y-x|}\leq 0\Big\},\quad \forall x\in\cU\subset\R^n.
\]
Moreover, $D^+f(x)\subset\R^n$ is a convex set of $\R^n$.
\end{defn}
\begin{rmk}
Due to (\ref{eq:scl-defn}), a SCL $f$ is differentiable at $x\in\cU$, iff $D^+f(x)$ is a singleton.
\end{rmk}

\begin{thm}[Theorem 6.4.1 of \cite{CS}]
$u_c^-(\cdot,t)$ is $SCL(M\times\T,\R)$.
\end{thm}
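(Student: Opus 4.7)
The plan is to deduce the semiconcavity estimate from the two core properties of backward weak KAM solutions: calibration along backward orbits and pointwise domination $u_c^-\prec L_c+\alpha(c)$. Fix a base point $(x_0,t_0)\in M\times\T$ and let $\gamma:(-\infty,t_0]\to M$ be a backward calibrated curve ending at $(x_0,t_0)$, whose existence is built into the definition of $u_c^-$. Choose a small parameter $\delta>0$ to be kept fixed. Calibration gives the exact identity
\[
u_c^-(x_0,t_0)=u_c^-\bigl(\gamma(t_0-\delta),t_0-\delta\bigr)+A_c(\gamma)\big|_{[t_0-\delta,t_0]},
\]
while domination yields the one-sided bound
\[
u_c^-(y,s)\le u_c^-\bigl(\gamma(t_0-\delta),t_0-\delta\bigr)+A_c(\xi)\big|_{[t_0-\delta,s]}
\]
for every absolutely continuous competitor $\xi$ joining $\gamma(t_0-\delta)$ to $y$ on the interval $[t_0-\delta,s]$.

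Extending $\gamma$ slightly past $t_0$ via the Euler--Lagrange flow, I would then construct an explicit two-parameter family of competitors
\[
\xi_{h,\sigma}(\tau):=\gamma\bigl(\phi_\sigma(\tau)\bigr)+\psi_\sigma(\tau)\,h,\qquad \tau\in[t_0-\delta,t_0+\sigma],
\]
where $\phi_\sigma$ is the affine bijection $[t_0-\delta,t_0+\sigma]\to[t_0-\delta,t_0]$ and $\psi_\sigma$ is a smooth scalar bump satisfying $\psi_\sigma(t_0-\delta)=0$ and $\psi_\sigma(t_0+\sigma)=1$. Then $\xi_{h,\sigma}(t_0+\sigma)=x_0+h$, while both $\xi_{h,\sigma}$ and its velocity depend smoothly on $(h,\sigma)$ with derivatives of size $O(1/\delta)$.

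Next I would Taylor-expand the action along this family around $(h,\sigma)=(0,0)$. Using the $C^2$ regularity of $L$ on the bounded set of $(x,v,t)$ in which our curves take values, the symmetric second difference will satisfy
\[
A_c(\xi_{h,\sigma})+A_c(\xi_{-h,-\sigma})-2A_c(\gamma)\big|_{[t_0-\delta,t_0]}\le C_\delta\bigl(|h|^2+\sigma^2\bigr),
\]
with $C_\delta$ independent of $(x_0,t_0)$. The first-order terms cancel by symmetry, the $\alpha(c)$-terms cancel because $(\delta+\sigma)+(\delta-\sigma)-2\delta=0$, and the $-\langle\eta_c,\dot\xi\rangle$ piece of $A_c$, being the path integral of the closed one-form $\eta_c$, reduces to a boundary contribution $U(x_0+h)+U(x_0-h)-2U(x_0)$ for a local primitive $U$, itself $O(|h|^2)$ by smoothness. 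Substituting the calibration identity and the two domination inequalities at $(x_0\pm h,t_0\pm\sigma)$ then yields the desired SCL estimate
\[
u_c^-(x_0+h,t_0+\sigma)+u_c^-(x_0-h,t_0-\sigma)-2u_c^-(x_0,t_0)\le C_\delta\bigl(|h|^2+\sigma^2\bigr).
\]

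The main obstacle is ensuring that $C_\delta$ is uniform in the base point. Two technical checks are needed: a uniform a priori $L^\infty$-bound on $\dot\gamma$ across all backward calibrated curves, which follows from the global Lipschitz bound on $u_c^-$ supplied by Proposition~\ref{prop:tw-ma}(2) together with the Legendre transform confining the associated momenta to a compact subset of $T^*M$; and a corresponding bound on $\dot\xi_{\pm h,\pm\sigma}$, which holds once $|h|$ and $|\sigma|$ are small relative to $\delta$ because the parameter derivatives of the construction are $O(1/\delta)$. With these in place, the $C^2$-remainder of $L$ is controlled by a constant depending only on $L$, $c$, and $\delta$, so the SCL inequality holds with the same constant at every $(x_0,t_0)$, proving the theorem.
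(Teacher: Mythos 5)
The paper states this result as a direct citation of Theorem~6.4.1 of Cannarsa--Sinestrari and does not reprove it, so there is no internal argument to compare against; your job was effectively to reconstruct the cited proof, and you have done so correctly. Your argument is the standard one: anchor at a backward calibrated curve $\gamma$ on $[t_0-\delta,t_0]$, use calibration to get an exact identity for $u_c^-(x_0,t_0)$ and domination to get one-sided bounds at $(x_0\pm h,t_0\pm\sigma)$ via explicit competitors obtained by affinely reparametrising $\gamma$ in time and shifting it linearly in space, then Taylor-expand the $c$-action in $(h,\sigma)$ and observe that the linear terms cancel by symmetry, the $\alpha(c)$-contribution cancels because the total durations add up, and the closed one-form contributes only a boundary term that is itself $O(|h|^2)$. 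The uniformity of the constant is exactly where it should be: the Lipschitz bound on $u_c^-$ plus the Legendre transform confine $\dot\gamma$ to a compact set, and for $|h|,|\sigma|\lesssim\delta$ the competitor velocities stay in a slightly larger compact set, so the $C^2$ remainder of $L$ is controlled.

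Two small remarks, neither of which is a genuine gap. First, the parenthetical about extending $\gamma$ past $t_0$ via the Euler--Lagrange flow is superfluous: $\phi_\sigma$ maps into $[t_0-\delta,t_0]$, so only $\gamma$ on that interval is ever used; you may drop that sentence. Second, the estimate you derive holds only for $|h|,|\sigma|$ small relative to $\delta$; to match the literal definition of $SCL$ you should add the one-line observation that for $|h|$ bounded away from zero the global Lipschitz bound on $u_c^-$ already gives $u_c^-(x_0+h,t_0+\sigma)+u_c^-(x_0-h,t_0-\sigma)-2u_c^-(x_0,t_0)\le 4K_c\,(|h|+|\sigma|)\le C'(|h|^2+\sigma^2)$, so a single constant works globally after enlarging. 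With that patch, the argument is complete.

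Finally, on the regularity bookkeeping: to justify the $C_\delta(|h|^2+\sigma^2)$ bound on the symmetric second difference of the action you only need $A_c(\xi_{h,\sigma})$ to be $C^{1,1}$ in $(h,\sigma)$, not $C^2$. Since $\gamma$ solves the Euler--Lagrange equation for a $C^2$ Lagrangian with $L_{vv}>0$, $\gamma$ is $C^2$ and hence $\dot\gamma$ is $C^1$; combined with $L\in C^2$ this gives Lipschitz first partials of the integrand in $(h,\sigma)$, uniformly in $\tau$, which is all the Taylor estimate requires. It is worth saying this explicitly, since a naive ``everything is $C^2$'' claim would need $\ddot\gamma$ Lipschitz, which is not automatic.
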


\begin{lem} \label{lem:ex-curve-semi}
For any extremal point $p\in ex(D^+u_c^-(x,t))$ with $(x,t)\in M\times\T$, there exists a backward semistatic curve $\gamma:(-\infty,t]\rightarrow M$ with $(\gamma(t),\dot\gamma(t),t)=(x,H_p(x,p+c,t),t)$, which calibrates the weak KAM solution $u_c^-(x,t)$.
\end{lem}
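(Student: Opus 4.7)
The plan is to connect extremal points of $D^+u_c^-(x,t)$ to calibrated curves through the classical identification
\[
D^+u_c^-(x,t)=\mathrm{co}\,D^*u_c^-(x,t),
\]
where $D^*u_c^-(x,t)$ denotes the set of \emph{reachable gradients}, i.e.~limits $p=\lim_n\partial_x u_c^-(x_n,t)$ taken over sequences $x_n\to x$ of differentiability points of $u_c^-(\cdot,t)$ (see \cite{CS}). Since $D^+u_c^-(x,t)$ is compact convex and $D^*u_c^-(x,t)$ is closed and bounded, Milman's converse to the Krein--Milman theorem forces every extremal point $p\in ex(D^+u_c^-(x,t))$ to lie already in $D^*u_c^-(x,t)$. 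Hence it suffices to realize each reachable gradient by a backward calibrated curve whose final velocity matches the Legendre dual.

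First I would recall the standard duality fact: if $\gamma:(-\infty,s]\to M$ is backward calibrated for $u_c^-$ and $\gamma(s)=y$, then $L_v(y,\dot\gamma(s),s)-c\in D^+u_c^-(y,s)$. This is obtained by comparing the calibration identity on $[s-\varepsilon,s]$ with domination along a short competitor curve from $\gamma(s-\varepsilon)$ to an arbitrary nearby point $z$, and letting $\varepsilon\to 0^+$ while exploiting strict convexity of $L$ in $v$. In particular, at a differentiability point $(y,s)$ the superdifferential is a singleton, so the backward calibrated curve ending at $(y,s)$ is unique and its final velocity is exactly $H_p(y,\partial_x u_c^-(y,s)+c,s)$.

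Next I would fix $p\in ex(D^+u_c^-(x,t))$, select differentiability points $x_n\to x$ with $\partial_x u_c^-(x_n,t)\to p$ (the defining property of $D^*u_c^-$), and form the associated unique backward calibrated curves $\gamma_n:(-\infty,t]\to M$ with $\gamma_n(t)=x_n$. Because $u_c^-$ is $K_c$-Lipschitz by Proposition \ref{prop:tw-ma}, the final momenta $L_v(x_n,\dot\gamma_n(t),t)=\partial_x u_c^-(x_n,t)+c$ are uniformly bounded; completeness of the Euler--Lagrange flow and standard a priori estimates for Tonelli minimizers then give uniform bounds on $(\gamma_n,\dot\gamma_n)$ on every compact subinterval. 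Arzel\`a--Ascoli combined with a diagonal extraction produces a $C^1_{\mathrm{loc}}$-limit curve $\gamma:(-\infty,t]\to M$ with $\gamma(t)=x$ and $\dot\gamma(t)=\lim_n\dot\gamma_n(t)=H_p(x,p+c,t)$. Continuity of the action functional under uniform $C^1$ convergence on compacts, together with continuity of $u_c^-$, lets me pass to the limit in the calibration identity, so $\gamma$ remains backward calibrated for $u_c^-$. Finally, any curve which is backward calibrated for a weak KAM solution is automatically backward $c$-semistatic, because the domination $u_c^-\prec L_c+\alpha(c)$ pinches the Ma\~n\'e potential $F_c$ between $u_c^-(\gamma(t'),t')-u_c^-(\gamma(t),t)$ and $A_c(\gamma)|_{[t,t']}$, forcing equality.

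The main obstacle is the compactness and selection step in the previous paragraph: a generic element of $D^+u_c^-(x,t)$ is only an accumulation point of gradients, and several approximating sequences through different differentiability points may produce limiting curves whose final velocities form a genuine convex combination of Legendre duals, not the specific vector $H_p(x,p+c,t)$. Extremality of $p$ in $D^+u_c^-(x,t)$ is exactly what excludes this pathology: it guarantees that $p$ itself lies in the closed set $D^*u_c^-(x,t)$, so the limit velocity is pinned down uniquely by Legendre duality applied to the chosen approximating sequence.
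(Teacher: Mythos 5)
Your proof follows essentially the same route as the paper's: invoke that extreme points of the superdifferential lie in the set of reachable gradients (Theorem~3.3.6 of \cite{CS}, which you rederive via $D^+u=\mathrm{co}\,D^*u$ together with Milman's partial converse to Krein--Milman), approximate by the unique backward calibrated curves ending at nearby differentiability points, and extract a limiting calibrated curve by compactness. Your write-up supplies more of the underlying detail --- the momentum identity $L_v(y,\dot\gamma(s),s)-c\in D^+u_c^-(y,s)$, the a priori bounds justifying Arzel\`a--Ascoli, and the calibrated-implies-semistatic pinching --- but the skeleton is exactly the argument the paper cites and sketches.
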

\begin{proof}
It's proved in Theorem 3.3.6  of \cite{CS}, that for any $t\in\T$ and $p\in ex(D^+u_c^-(x,t))$, there exists a sequence $\{x_n\}_{n\in\N}\subset M$ converges to $x$, such that $u_c^-$ is differentiable at $(x_n,t)$ and $\partial_x u_c^-(x_n,t)\rightarrow p$; Therefore, we can find a unique backwrad semistatic curve $\gamma_n$ ending with $(x_n,t)$, such that $\dot\gamma_n(t)=H_p(x_n,\partial_x u_c^-(x_n,t)+c,t)$. Since $\gamma_n$ is a calibrated curve of $u_c^-$, then $\{(\gamma_n(s),\dot\gamma_n(s),s)|s\in(-\infty,t]\}_{n\in\N}$ is compact in $TM\times\T$. So we can get an accumulating curve $\gamma_\infty$ of $\gamma_n$ by letting $n\rightarrow+\infty$. Due to (\ref{eq:back-cal}), we can easily see that $\gamma_\infty$ is a backward calibrated curve ending with $(x,t)$.
\end{proof}

\vspace{10pt}

\subsection{Variational conclusions of twist maps} Now we apply previous conclusions to the twist map, i.e. $M=\T$. Benefit from the low dimension, the system now inherits a bunch of fine properties, which are originally proved in the series of works of Mather in 1980s. As a direct citation, 
most of these conclusions can be found in \cite{Mat1,Mat2,Mat3,Mat5,B}.
\begin{prop} \label{prop:twist}
For Lagrangian $L(x,v,t)$ satisfying the standing assumptions, we have
\begin{enumerate}
\item {\sf (Sec.6.2 of \cite{B})}  $\alpha(c)$ is $C^1$ smooth;
\item {\sf (Prop.6 of \cite{Mat2})} $\beta(h)$ is strictly convex;
\item {\sf (Sec.2 of \cite{Mat1})} $\beta(h)$ is differentiable at $h\in\R\backslash\Q$, i.e. there exists a unique $c_h\in H^1(\T,\R)$ equals $\beta'(h)$; Besides, $\wt\cA(c_h)=\wt{\mathcal N}(c_h)$;
\item  {\sf (Sec.3 of \cite{Mat1})} If $\beta(h)$ is differentiable at $h\in\Q$, there exists a rotational invariant curve with rotation number $h$;
\item  {\sf (Sec.8 of \cite{Mat3})} If $\beta(h)$ is not differentiable at $p/q\in\Q$ in lowest terms, then $D^-\beta(p/q)=[c^-,c^+]$ is an interval; 
\begin{enumerate}
\item For any $c\in(c^-,c^+)$, $\wt\cA(c)=\wt\cM(c)$ consists of only $p/q-$periodic orbits; 
\item For $c^+$ (resp. $c^-$), $\wt\cA(c^+)$ (resp. $\wt\cA(c^-)$) additionally contains all the $\big(p/q\big)^+$ (resp. $\big(p/q\big)^-$) minimal heteroclinic orbits, and for any two points $(x,t), (y,t)\in\cA(c^+)$ (resp. $\in\cA(c^-)$), we have
\[
h_c^\infty((x,t),(y,t))+h_c^\infty((y,t),(x,t))=0.
\]
\end{enumerate}
\item {\sf (Sec.8 of \cite{Mat3})} When $D^-\beta(p/q)=[c^-,c^+]$, for any $c\in(c^-,c^+)$, any two points $(x,0),(y,0)\in\cA(c)$ are in different Aubry classes if and only if they are in different connected components of $\cA(c)\cap\{t=0\}$.
\end{enumerate}
\end{prop}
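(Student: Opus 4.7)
This proposition collects results originally due to Mather (with later streamlining by Bernard and others), so the plan is not to rederive each statement from scratch but to indicate how all six items follow from a small set of two-dimensional phenomena: the Aubry crossing lemma (two distinct action minimizers lift to curves in $\R^2$ that intersect at most once) and the resulting rotation-number ordering of minimizing orbits. Throughout the plan I would pass freely between $\alpha$ and $\beta$ via Legendre duality.

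For items (1)--(3), the first observation is that on $\T$ the rotation number of a minimizing Euler--Lagrange orbit is well defined, and minimizers with distinct rotation numbers cannot coexist for the same cohomology class. This immediately gives (1): if $D^-\alpha(c)$ contained two distinct rotation numbers, two minimizing measures for $L_c$ would have to cross, so $D^-\alpha(c)$ is a single point for every $c$ and the convex function $\alpha$ is differentiable, hence $C^1$. Dually, any affine piece of $\beta$ would produce a continuum of minimizing measures sharing cohomology but with varying rotation number, again contradicting Aubry crossing, which gives (2). For (3), at irrational $h$ the Aubry--Mather theorem provides a unique Lipschitz-graph Mather set of rotation number $h$, tangent to a unique supporting hyperplane $c_h$; differentiability of $\beta$ at $h$ follows, and Ma\~{n}\'{e}'s equivalence of semistatic and static curves at such $c_h$ gives $\wt\cA(c_h)=\wt\cN(c_h)$.

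For items (4)--(6), I would analyse the rational rotation numbers $p/q$. If $\beta$ is differentiable at $p/q$, the unique supporting cohomology forces the $p/q$-minimizing orbits to fill an invariant rotational curve (Birkhoff), which is (4). If not, $D^-\beta(p/q)=[c^-,c^+]$ is a proper interval; for $c\in(c^-,c^+)$ the only minimizing measures are supported on $p/q$-periodic Birkhoff orbits, so $\wt\cA(c)=\wt\cM(c)$, and Aubry classes correspond exactly to connected components of $\cA(c)\cap\{t=0\}$, yielding (5a) and (6). At the extreme cohomologies $c^\pm$ one obtains, in addition to the periodic orbits, heteroclinic orbits of $(p/q)^\pm$ type lying in the gaps between adjacent Birkhoff periodics; the symmetric Peierl's barrier identity is then verified by concatenating such heteroclinics in both directions and invoking action minimality, which is (5b).

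The main obstacle is (5b), the only step that is not a soft consequence of Aubry crossing. To place a heteroclinic in $\wt\cA(c^\pm)$ one must show that after traversing it forward, the minimal action to return to the starting configuration equals the negative of the forward action, i.e.\ that a reverse heteroclinic of the opposite asymptotic type also minimizes. This uses the fine twist geometry inside the Birkhoff instability region: the cyclic ordering of orbits on the annulus forces uniqueness (up to vertical translates) of heteroclinics between adjacent Birkhoff periodics and pins down their action. This is exactly the content of Section~8 of \cite{Mat3}, and I would not attempt to shortcut it.
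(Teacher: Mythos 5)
Your proposal is consistent with the paper: the paper itself supplies no proof beyond the citations already embedded in the statement (Bernard Sec.~6.2, Mather's papers \cite{Mat1,Mat2,Mat3}), and your sketch accurately reconstructs the route those sources take---Aubry crossing plus Legendre duality for items~(1)--(3), and Mather's analysis of rational rotation numbers and Birkhoff instability regions for items~(4)--(6). The one imprecision is the phrase about two minimizing \emph{measures} crossing in item~(1)---it is the orbits in their supports that cross---but this is cosmetic, and your explicit deferral of item~(5b) to Section~8 of \cite{Mat3} matches exactly what the paper does.
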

\begin{proof}
Here we display the precise citations where the readers could find the proof.
\end{proof}
\vspace{10pt}

\subsection{Generalized Characteristics of twist maps} For $M=\T$, the GC of (\ref{eq:gc}) possesses some fine properties as well. The first person revealed the propagations of GC is Dafermos \cite{D}, where he concerned certain Cauchy problem of Hamilton-Jacobi equation. Later, Cannarsa and Yu reproved these conclusions in an energy-optimal Language \cite{Ca}, based on the theory of SCL functions developped in \cite{CS}. We will adopt their approaches in this subsection.\\

Based on the semiconcavity of previous $u_c^-$, we rewrite (\ref{eq:gc}) here for convenience:
\be
\left\{
\begin{aligned}
&\dot x:=\frac{dx}{ds}= \text{co}\Big[ \partial_pH\Big(x(s),c+D^+u_c^-\big(x(s),s+\tau\big),s+\tau\Big)\Big],\\
&x(0)=x_0\in \T,\;\tau\in\T.
\end{aligned}
\right.
\ee
where co$(U)$ is the convex closure of any set $U\subset\R^n$. Recall that for any backward  semi-static curve $\gamma:(-\infty,t]\rightarrow \T$ with $(-\infty,t]$ being the maximal domain of  it, $u_c^-$ is always differentiable at $(\gamma(s),[s])$ for all $s<t$. Therefore, for any point $(\gamma(s),[s])$ with $s<t$, previous equation becomes
\[
\frac {dx}{d\xi}=\dot\gamma(s+\xi),\quad  x(0)=\gamma(s)
\]
with the maximal domain by $(-\infty,t-s)$. That's a {\sf regular GC} of the system $H(x,p,t)$. \\

For the non-differentiable point $(x_0,\tau)\in \T^2$ of $u_c^-$, previous Lemma \ref{lem:ex-curve-semi} implies the existence of several backward semi-static curves. That leads to an invalidity to define the backward GC ending with this point. However, the forward flow of (\ref{eq:gc}) is still achievable locally, which exists as a {\sf singular GC}. Namely, there exists a real number $\dt>0$, such that $\eta:[0,\dt)\rightarrow M$ is a solution of (\ref{eq:gc}) with $\eta(0)=x$, Moreover, the propagation of $\eta$ conforms to the following:
\begin{prop}\cite{Ca}\label{prop:exi}
For any $(x_0,\tau)\in \T^2$, there exists  a unique GC $\eta:[0,\sigma]\rightarrow \T$ of  (\ref{eq:gc}) starting from $(x_0,\tau)$, of which the right derivative $\dot\eta^+(s)$ exists for all $s\in[0,\sigma)$ and satisfies
\[
\dot\eta^+(s)=\dfrac{H\Big(\eta(s),p^-\big(\eta(s),s+\tau\big)+c,s+\tau\Big)-H\Big(\eta(s),p^+\big(\eta(s),s+\tau\big)+c,s+\tau\Big)}{p^-\big(\eta(s),s+\tau\big)-p^+\big(\eta(s),s+\tau\big)}
\]
if $(\eta(s),s+\tau)$ is a non-differentiable point of $u_c^-$ and
\[
\dot\eta^+(s)=H_p\Big(\eta(s),\partial_x u_c^-(\eta(s),s+\tau)+c,s+\tau\Big)
\]
if $(\eta(s),s+\tau)$ is a differentiable point of $u_c^-$. Here $\Big[p^-\big(\eta(s),s+\tau\big),p^+\big(\eta(s),s+\tau\big)\Big]=D^+u_c^-(\eta(s),s+\tau)\subset\R$.
 Moreover, $\dot\eta^+$ is right-continuous on $[0,\sigma)$.
\end{prop}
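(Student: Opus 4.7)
The plan is to follow the approach of Cannarsa–Yu for generalized characteristics of semiconcave functions, adapted to the one-dimensional Hamilton–Jacobi setting. The starting point is that, because $u_c^-(\cdot,t)$ is SCL on $\T$, its superdifferential $D^+u_c^-(x,t)$ is a closed interval $[p^-(x,t), p^+(x,t)]$ whose graph is upper semicontinuous in $x$. Consequently the multifunction
\[
F(x,s) := \text{co}\bigl[\partial_p H\bigl(x,\ c+D^+u_c^-(x,s+\tau),\ s+\tau\bigr)\bigr]
\]
inherits nonempty, compact, convex values and is upper semicontinuous jointly in $(x,s)$. A Filippov-type existence theorem for upper-semicontinuous differential inclusions then yields an absolutely continuous solution $\eta:[0,\sigma]\to\T$ of (\ref{eq:gc}) with $\eta(0)=x_0$ on some nontrivial interval.

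For uniqueness I would establish a one-sided Lipschitz estimate for $F$. The linear semiconcavity of $u_c^-(\cdot,t)$ gives $p^+(x_2,t)-p^-(x_1,t)\le C(x_2-x_1)$ for all $x_1<x_2$, and since $H_{pp}$ is bounded on compact sets this implies
\[
(v_1-v_2)(x_1-x_2)\le C'|x_1-x_2|^2\quad\text{for all }v_i\in F(x_i,s).
\]
Gronwall applied to $|\eta_1(s)-\eta_2(s)|^2$ then forces $\eta_1\equiv\eta_2$ whenever they share an initial point.

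For the right-derivative formula, fix $s_0\in[0,\sigma)$ and set $x_0=\eta(s_0)$, $t_0=s_0+\tau$. If $u_c^-$ is differentiable at $(x_0,t_0)$ the formula is automatic since $F(x_0,s_0)$ is a singleton. Otherwise $D^+u_c^-(x_0,t_0)=[p^-,p^+]$ with $p^-<p^+$, and I would approach $(x_0,t_0)$ through sequences $(x_n^\pm, t_0)$ of differentiability points with $\partial_x u_c^-(x_n^\pm,t_0)\to p^\pm$, supplied by Lemma \ref{lem:ex-curve-semi}. Using the viscous identity $\partial_t u_c^- + H(x,\partial_x u_c^- + c,t) = \alpha(c)$ at each $(x_n^\pm,t_0)$ together with the chain rule for the absolutely continuous composition $s\mapsto u_c^-(\eta(s),s+\tau)$, one obtains in the limit two expressions for the same right-derivative of this composition at $s_0$. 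Their difference reads
\[
\dot\eta^+(s_0)(p^--p^+)=H(x_0,p^-+c,t_0)-H(x_0,p^++c,t_0),
\]
which is precisely the stated Rankine–Hugoniot ratio. Right-continuity of $\dot\eta^+$ then follows from upper semicontinuity of $F$ together with upper semicontinuity of $p^\pm$ and continuity of $H$: any limit of $\dot\eta^+(s_n)$ as $s_n\searrow s$ lies in $F(\eta(s),s)$ and must satisfy the same Rankine–Hugoniot identity, which singles out a unique value.

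The main obstacle, I expect, lies in the one-sided Lipschitz/uniqueness step: the interval $[p^-,p^+]$ can oscillate nontrivially along the trajectory, so closing the Gronwall loop requires the full strength of the linear semiconcavity modulus of $u_c^-$ coupled with the strict convexity of $H$ in $p$. This is also where the argument depends essentially on $\dim M = 1$; in higher dimensions the superdifferential is merely a closed convex set rather than an interval with two extremal slopes, and the Rankine–Hugoniot selection must be replaced by a more delicate energy-based characterization.
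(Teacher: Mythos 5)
The paper's own ``proof'' of this proposition is a single-sentence citation to Theorem 3.11 of Cannarsa--Yu \cite{Ca}, noting that their argument is constructive and requires only the semiconcavity of $u$. Your proposal takes a genuinely different route: you invoke a Filippov-type existence theorem for the upper-semicontinuous, compact-convex-valued inclusion, then prove uniqueness via a one-sided Lipschitz estimate and Gronwall (this step mirrors the paper's separately-stated Proposition~\ref{prop:uni} almost verbatim), and finally derive the Rankine--Hugoniot identity a posteriori by differentiating the composition $s\mapsto u_c^-(\eta(s),s+\tau)$ through left and right branches. By contrast, Cannarsa--Yu build $\eta$ forward in time by an explicit Euler-type scheme that selects the Rankine--Hugoniot velocity at each step, so existence of the right derivative and the formula for it emerge simultaneously from the construction rather than being extracted afterwards.

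The genuine gap in your argument is the existence of $\dot\eta^+(s)$ at \emph{every} $s\in[0,\sigma)$. Filippov's theorem delivers an absolutely continuous $\eta$ with $\dot\eta(s)\in F(\eta(s),s)$ only for a.e.\ $s$; it says nothing about one-sided derivatives at an arbitrary point, and your chain-rule computation presupposes that both $\dot\eta^+(s_0)$ and the right derivative of $g(s)=u_c^-(\eta(s),s+\tau)$ exist and obey the classical chain rule at the singular time $s_0$. Deriving ``two expressions for the same right-derivative'' at $s_0$ requires knowing that $\eta$ stays on the singular set for $s>s_0$ near $s_0$ (so that both branches $p^\pm$ are realized along the trajectory) and that the limits of $\partial_x u_c^-$ from each side are attained along $\eta$ with a genuine one-sided derivative of $g$; neither of these is supplied by Filippov's theorem. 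This circularity is exactly what the constructive approach of \cite{Ca} avoids, since there the velocity field is prescribed by the Rankine--Hugoniot ratio before the limit is taken. Your right-continuity argument for $\dot\eta^+$ inherits the same issue: you appeal to the Rankine--Hugoniot identity ``singling out a unique value'' along a sequence $s_n\searrow s$, but you have not yet established that $\dot\eta^+(s_n)$ exists or that $(\eta(s_n),s_n+\tau)$ remain singular points. To close the gap you would need either to reproduce the forward Euler construction of \cite{Ca}, or to prove independently that the unique Filippov solution propagates the singular set and admits a right derivative everywhere.
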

\begin{proof}
This result is essentially proved in the Theorem 3.11 of \cite{Ca}, where the proof is constructive and only a general semi-concavity of $u$ is needed. Here we just adapt it to our current setting, by adding the dependence of $c$, $\alpha(c)$. 
\end{proof}

Recall that for regular GC, the uniqueness holds. As for the singular GC, it holds as well:
\begin{prop}[Uniqueness]\label{prop:uni}
Starting from any non-differentiable point $(x_0,\tau)\in \T^2$ of $u_c^-$, (\ref{eq:gc}) has a unique solution. 
\end{prop}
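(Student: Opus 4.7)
The plan is to combine the explicit chord-slope formula from Proposition \ref{prop:exi} with a one-sided Lipschitz estimate for the associated velocity field, and then conclude by a Gronwall argument. Write
\[
v(x,t):=\frac{H(x,p^-(x,t)+c,t)-H(x,p^+(x,t)+c,t)}{p^-(x,t)-p^+(x,t)}
\]
at singular points, and $v(x,t)=H_p(x,\partial_x u_c^-(x,t)+c,t)$ at regular ones. The distinguished solution $\eta$ supplied by Proposition \ref{prop:exi} satisfies $\dot\eta^+(s)=v(\eta(s),s+\tau)$ with right-continuous right-derivative.

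First I would show that \emph{every} solution $\tilde\eta$ of (\ref{eq:gc}) starting from $(x_0,\tau)$ in fact satisfies $\dot{\tilde\eta}^+(s)=v(\tilde\eta(s),s+\tau)$, not merely that $\dot{\tilde\eta}(s)$ lies somewhere in the convex hull $\mathrm{co}[\partial_p H(\tilde\eta(s),c+D^+u_c^-,s+\tau)]$. By Lemma \ref{lem:ex-curve-semi}, only the extremal values $p^\pm$ of $D^+u_c^-(\tilde\eta(s),s+\tau)$ carry backward calibrated characteristics; combined with the convexity of $H$ in $p$, this pins the admissible forward right-velocity of any absolutely continuous solution of the differential inclusion to the chord value $v$. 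This step reproduces the content of Theorem 3.11 in \cite{Ca} in our parametrized setting.

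Next I would establish a one-sided Lipschitz bound
\[
\bigl(v(x_1,t)-v(x_2,t)\bigr)(x_1-x_2)\leq K(x_1-x_2)^2
\]
with $K$ depending only on the semiconcavity constant of $u_c^-$ and the Lipschitz constant of $H_p$ on the relevant compact momentum range. The SCL inequality (\ref{eq:scl-defn}) gives that $p^\pm(\cdot,t)-Cx$ are non-increasing; substituting into the chord formula and performing a mean-value expansion of $H$ in $p$ yields the estimate after a short computation. Given this, for two solutions $\eta_1,\eta_2$ starting from $(x_0,\tau)$ set $w(s):=(\eta_1(s)-\eta_2(s))^2$, so that
\[
\dot w^+(s)=2(\eta_1(s)-\eta_2(s))\bigl(v(\eta_1(s),s+\tau)-v(\eta_2(s),s+\tau)\bigr)\leq 2Kw(s).
\]
Since $w(0)=0$, Gronwall's inequality forces $w\equiv0$ on $[0,\sigma]$, hence $\eta_1\equiv\eta_2$.

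The hardest part will be the first step: excluding velocities from the interior of the convex hull as admissible derivatives along an AC solution of (\ref{eq:gc}). This is exactly where the one-dimensionality is essential, because the extremal structure of $D^+u_c^-$ together with the propagation of backward calibrated curves only constrains the forward dynamics so rigidly when the singular set sits in codimension one; in higher dimensions the analogous uniqueness statement is considerably more delicate and is one of the reasons the extension discussed in Section \ref{s5} requires additional hypotheses.
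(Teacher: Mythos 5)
Your core mechanism---a one-sided Lipschitz estimate for the velocity field, followed by Gronwall applied to $w(s)=(\eta_1(s)-\eta_2(s))^2$---is exactly what the paper does. But your first step, which you correctly flag as the hardest, is not needed. You set out to prove that any solution of the differential inclusion must have right-derivative equal to the chord value $v$, and only then apply the one-sided Lipschitz bound to $v$. The paper skips this entirely: the one-sided Lipschitz inequality holds for \emph{every} selector of the inclusion, not just the chord value. Indeed, since $H_p(x,\cdot,t)$ is continuous and strictly increasing and $D^+u_c^-(x,t)=[p^-,p^+]$ is a compact interval, the set $\{H_p(x,p+c,t):p\in D^+u_c^-(x,t)\}$ is already convex, so any admissible right-velocity $\dot\eta^+(s)$ equals $H_p(\eta(s),p+c,s+\tau)$ for some $p\in D^+u_c^-(\eta(s),s+\tau)$. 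Semiconcavity (\ref{eq:scl-defn}) gives $(p_1-p_2)(x_1-x_2)\leq C|x_1-x_2|^2$ for \emph{arbitrary} $p_i\in D^+u_c^-(x_i,t)$, and combined with $H_{pp}>0$ and the Lipschitz bound on $H_{px}$, this yields
\[
(x_1-x_2)\bigl(H_p(x_1,p_1+c,t)-H_p(x_2,p_2+c,t)\bigr)\leq K_c\,|x_1-x_2|^2
\]
directly, for whatever $p_1,p_2$ the two solutions happen to select. Gronwall then closes the argument immediately. So your proof would be correct once you fill in step 1, but step 1 is a detour: the sketch you give (extremality via Lemma \ref{lem:ex-curve-semi} plus convexity of $H$) does not by itself rule out interior selectors and would need the full Dafermos machinery to make rigorous, whereas the paper's route renders the question moot.
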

\begin{proof}
Suppose $\xi:[0,\sigma]\rightarrow\T$ and $\eta:[0,\sigma]\rightarrow\T$ are two different GCs starting from $x_0$, then due to Proposition \ref{prop:exi}, we have 
\[
\dot\xi^+(s)\in\Big[H_p(\xi(s),p^+(\xi(s),s+\tau)+c,s+\tau),H_p(\xi(s),p^-(\xi(s),s+\tau)+c,s+\tau)\Big],
\]
\[
\dot\eta^+(s)\in\Big[H_p(\eta(s),p^+(\eta(s),s+\tau)+c,s+\tau),H_p(\eta(s),p^-(\eta(s),s+\tau)+c,s+\tau)\Big]
\]
for all $s\in[0,\sigma)$. Recall that $H_{pp}>0$ for everywhere of $\T^2$, and $H\in C^2(T\T\times\T,\R)$, then
\be
\frac d{ds}|\xi(s)-\eta(s)|^2&=&\big(\xi(s)-\eta(s)\big)\cdot\big(H_p(\xi(s),D^+u_c^-(\xi(s),s+\tau)+c,s+\tau)\nonumber\\
& &-H_p(\eta(s),D^+u_c^-(\eta(s),s+\tau)+c,s+\tau)\big)\nonumber\\
&\leq& K_c|\xi(s)-\eta(s)|^2\nonumber
\ee
for a.e. $s\in[0,\sigma]$. By the Gronwall's inequality we get $\xi\equiv\eta$.
\end{proof}
\vspace{20pt}

\section{$\frac{1}{2}-$H\"older regularity  of weak KAM solutions}\label{s3}

\vspace{20pt}

We devote this section to prove the Theorem \ref{thm:holder}. For this purpose, we could restrict the system to a section $\Sigma_0:=\{t=0\}$. Once we prove the $1/2-$H\"older continuity of $\{u_c^-(x,0)\}_{c\in H^1(\T,\R)}$, then Theorem \ref{thm:holder} will be proved since the section $\Sigma_0$ can be freely varied.\\

Next, we have to choose suitable weak KAM solutions. For any $c\in H^1(\T,\R)$, let's choose  $x_0\in\big(\cA(c)\cap\Sigma_0\big)$ being the closest point to $0$, and assume 
\be\label{eq:solution}
u_c^-(\cdot,t):=h_c^\infty((x_0,0),(\cdot,t)):\T\rightarrow\R
\ee
being the designated solution. For such a sequence $\{u_c^-(x,0)\}_{c\in H^1(\T,\R)}$, we can prove the following Lemmas:

\begin{lem}\label{lem:order}
For a fixed $x\in\T$, there are two backward semi-static curves $\gamma$, $\gamma'$ satisfying $\rho(\gamma)>\rho(\gamma')$, then 
\be\label{eq:order-ineq}
\dot\gamma^-(0)>\dot\gamma'^-(0),
\ee
where $\dot\gamma^-$ and $\dot\gamma'^-$ are the left derivatives respectively.
\end{lem}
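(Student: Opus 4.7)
The plan is to argue by contradiction, combining the no-crossing principle for action minimizers with ODE uniqueness. Lift both curves to the universal cover, writing $\bar\gamma,\bar\gamma':(-\infty,0]\to\R$ with $\bar\gamma(0)=\bar\gamma'(0)=\bar x$ for a common preimage $\bar x$ of $x$. On every bounded sub-interval the backward calibration condition reduces (up to a boundary term that vanishes when endpoints are fixed) to ordinary action minimization, so each of $\bar\gamma,\bar\gamma'$ is a Tonelli minimizer between any two of its points. I would then invoke the classical no-crossing statement: if $\bar\gamma$ and $\bar\gamma'$ met at some $s_1\in(-\infty,0)$ in addition to $s=0$, the swap curve $\tau=\bar\gamma|_{(-\infty,s_1]}\cup\bar\gamma'|_{[s_1,0]}$ would share the endpoints of $\bar\gamma$ on every interval $[T,0]$ with $T<s_1$ and match its $c$-action, and therefore itself be a $c$-minimizer; but $\tau$ has a corner at $s_1$ unless $\dot{\bar\gamma}(s_1)=\dot{\bar\gamma}'(s_1)$, in which case EL uniqueness forces $\bar\gamma\equiv\bar\gamma'$. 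Either way, $\bar\gamma(s)\neq\bar\gamma'(s)$ for every $s<0$.

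With no-crossing in place I would run a trichotomy on $\dot\gamma^-(0)$ versus $\dot\gamma'^-(0)$. In the equality case, the EL flow started backwards from the common initial datum $(x,\dot\gamma^-(0),0)$ is unique, so $\bar\gamma\equiv\bar\gamma'$, contradicting $\rho(\gamma)>\rho(\gamma')$. If instead $\dot\gamma^-(0)<\dot\gamma'^-(0)$, the Taylor expansion at $s=0^-$,
\[
\bar\gamma(s)-\bar\gamma'(s)=\big(\dot\gamma^-(0)-\dot\gamma'^-(0)\big)s+o(s),
\]
gives $\bar\gamma(s)>\bar\gamma'(s)$ just to the left of $0$, while the very definition of the rotation number together with $\rho(\gamma)>\rho(\gamma')$ yields $\bar\gamma(s)-\bar\gamma'(s)=(\rho(\gamma)-\rho(\gamma'))s+o(s)<0$ as $s\to-\infty$. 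The intermediate value theorem then produces a crossing in $(-\infty,0)$, contradicting the no-crossing step. Only the case $\dot\gamma^-(0)>\dot\gamma'^-(0)$ survives, which is the claim.

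The main obstacle will be justifying the no-crossing step in the cross-cohomology setting, since $\gamma,\gamma'$ may be calibrated by different $c$. The key is that $\bar\gamma,\bar\gamma'$ restricted to $[s_1,0]$ share both endpoints; coupling the $c$-minimality of $\bar\gamma$ with the $c'$-minimality of $\bar\gamma'$ and noting that $A_c-A_{c'}$ depends only on the endpoints forces $A_c(\bar\gamma)|_{[s_1,0]}=A_c(\bar\gamma')|_{[s_1,0]}$, which is exactly what is needed to run the corner-smoothing argument uniformly within the single Lagrangian $L_c$. A minor secondary point is that $\dot\gamma^-(0)$ and $\dot\gamma'^-(0)$ are genuine limits of smooth Euler--Lagrange velocities, which is automatic once one extends the semi-static curves past $t=0$ using the completeness assumption.
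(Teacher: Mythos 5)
Your argument is correct and runs along the same lines as the paper's: assume the wrong velocity ordering at $s=0$, use the rotation-number ordering to force a second intersection of the lifted curves in $(-\infty,0)$, and derive a contradiction because such an intersection would yield a swap curve with a corner, which cannot be a Tonelli minimizer. The paper appeals to Mather's Cross Lemma for that last step, while you re-derive the swap/corner/Euler--Lagrange-uniqueness mechanism directly, but this is citation versus re-proof rather than a genuinely different route. Your version does tie up two loose ends left implicit in the paper: you dispose of the equality case $\dot\gamma^-(0)=\dot\gamma'^-(0)$ by EL uniqueness (the paper's argument as written only excludes $\dot\gamma^-(0)<\dot\gamma'^-(0)$, which by itself yields only $\geq$), and you handle the possibility that $\gamma$ and $\gamma'$ are semi-static for different cohomology classes by noting that $A_c-A_{c'}$ depends only on endpoints, so both curves have equal $c$-action along the shared sub-arc and the corner argument can be run in a single Lagrangian $L_c$. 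These are useful points of precision, but the core mechanism is the same.
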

\begin{proof}
As $\gamma$ (resp. $\gamma'$) is a backward semi-static curve,  so it has to be a minimizer of the following variational calculus:
\[
\cV(x,\rho(\gamma)):=\min_{\substack{\eta(0)=x\\\rho(\eta)=\rho(\gamma)}}\int_{-\infty}^0 L(\eta,\dot\eta,t)dt\quad\Big(\text{resp.\ }\cV(x,\rho(\gamma'))=\min_{\substack{\eta(0)=x\\\rho(\eta)=\rho(\gamma')}}\int_{-\infty}^0 L(\eta,\dot\eta,t)dt\Big).
\]
If $\dot\gamma^-(0)<\dot\gamma'^-(0)$, then due to $\rho(\gamma)>\rho(\gamma')$ there must be a $s\in(-\infty,0)$, such that $\gamma$ transversally intersects $\gamma'$ at time $s$. So we can find an open interval $(s-\sigma,s+\sigma)$ such that 
\be
& &h\big((\gamma(s-\sigma),s-\sigma),(\gamma(s+\sigma),s+\sigma)\big)+h\big((\gamma'(s-\sigma),s-\sigma),(\gamma'(s+\sigma),s+\sigma)\big)\nonumber\\
&<&h\big((\gamma(s-\sigma),s-\sigma),(\gamma'(s+\sigma),s+\sigma)\big)+h\big((\gamma'(s-\sigma),s-\sigma),(\gamma(s+\sigma),s+\sigma)\big)\nonumber
\ee
due to the {\sf Mather's Cross Lemma} (see Theorem 2 of \cite{Mat2}). That contradicts with the minimal property of semi-static curves, which instantly indicates (\ref{eq:order-ineq}).
\end{proof}
Due to the Legendre transformation, previous Lemma can be translated into the following:
\begin{cor}
For $c,c'\in H^1(\T,\R)$ satisfying $\alpha'(c)>\alpha'(c')$, and any $x\in\T$, we have 
\be
c+D^+u_c^-(x,0)>c'+D^+u_{c'}^-(x,0).
\ee
\end{cor}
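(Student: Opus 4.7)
The plan is to reduce the statement to an application of Lemma \ref{lem:order} by pairing each extremal element of $c+D^+u_c^-(x,0)$ with a backward semi-static curve via Lemma \ref{lem:ex-curve-semi}, and then converting the resulting inequality on velocities into an inequality on momenta through the Legendre transform.

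First I would observe that $D^+u_c^-(x,0)$ is a convex subset of $\R$, hence a (possibly degenerate) interval $[p_c^-,p_c^+]$ whose extremal points are its endpoints; the same holds with $c$ replaced by $c'$. To prove the set inequality $c+D^+u_c^-(x,0)>c'+D^+u_{c'}^-(x,0)$ it therefore suffices to show that the smallest element of the first interval strictly exceeds the largest element of the second, i.e., $c+p_c^->c'+p_{c'}^+$. Invoking Lemma \ref{lem:ex-curve-semi} at the extremal point $p_c^-\in ex(D^+u_c^-(x,0))$ produces a backward semi-static curve $\gamma_c\colon(-\infty,0]\to\T$ with $\gamma_c(0)=x$ and $\dot\gamma_c^-(0)=H_p(x,c+p_c^-,0)$; likewise the extremal point $p_{c'}^+$ yields $\gamma_{c'}$ with $\dot\gamma_{c'}^-(0)=H_p(x,c'+p_{c'}^+,0)$.

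Next I would identify the rotation number of each of these backward semi-static curves. Any $c$-semi-static orbit lies in the \Mane{} set $\wt{\mathcal{N}}(c)$, whose $\alpha$-limit set sits in the Aubry set $\wt{\mathcal{A}}(c)$; the Birkhoff ergodic theorem applied along the backward trajectory produces a Mather-minimizing measure whose rotation number belongs to $D^-\alpha(c)$, and by Proposition \ref{prop:twist}(1) the $C^1$ regularity of $\alpha$ forces $D^-\alpha(c)=\{\alpha'(c)\}$. Hence $\rho(\gamma_c)=\alpha'(c)>\alpha'(c')=\rho(\gamma_{c'})$, and Lemma \ref{lem:order} applied at the common endpoint $x$ delivers $\dot\gamma_c^-(0)>\dot\gamma_{c'}^-(0)$, that is, $H_p(x,c+p_c^-,0)>H_p(x,c'+p_{c'}^+,0)$. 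Strict monotonicity of $p\mapsto H_p(x,p,0)$, which follows from the positive definiteness of $H$, then yields $c+p_c^->c'+p_{c'}^+$ and completes the argument.

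The main obstacle is the rotation-number identification in the previous paragraph: that the rotation number of a backward semi-static curve equals $\alpha'(c)$. Though standard in Mather theory, this statement is not spelled out in the preliminaries of Section \ref{s2}, so one has to assemble it carefully from the chain $\wt{\mathcal{M}}(c)\subset\wt{\mathcal{A}}(c)\subset\wt{\mathcal{N}}(c)$, the Mather-\Mane{} characterization of minimizing measures, and the $C^1$ smoothness of $\alpha$, while verifying that the time-average definition of $\rho(\gamma)$ used in Lemma \ref{lem:order} coincides with the rotation number of the induced invariant measure in $H_1(\T,\R)\cong\R$.
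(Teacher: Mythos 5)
Your proposal is correct and follows essentially the same route as the paper's proof: pair the extremal points of the super-differentials with backward semi-static curves via Lemma \ref{lem:ex-curve-semi}, invoke Lemma \ref{lem:order} on their rotation numbers, and translate the resulting velocity inequality to a momentum inequality via $\partial^2_{pp}H>0$. Your version is slightly more explicit than the paper's on two points—the reduction to the specific endpoints $p_c^-$ and $p_{c'}^+$, and the justification that backward semi-static curves carry rotation number $\alpha'(c)$ (the paper cites (\ref{eq:ineq-a-b}) without elaboration)—but no genuinely new ideas are introduced.
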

\begin{proof}
Due to (\ref{eq:ineq-a-b}), the backward $c-$ (resp. $c'-$) semi-static curve $\gamma$ (resp. $\gamma'$) has a rotation number $\alpha'(c)$ (resp. $\alpha'(c')$). Due to Lemma \ref{lem:ex-curve-semi}, we know $\dot\gamma^-(0)=H_p(x,ex(D^+u_c^-(x,0))+c,0)$ (resp. $\dot\gamma'^-(0)=H_p(x,ex(D^+u_{c'}^-(x,0))+c',0)$). Recall $\partial^2_{pp}H>0$ everywhere due to the positive definiteness, then $\dot\gamma^-(0)>\dot\gamma'^-(0)$ is equivalent to $c+D^+u_c^-(x,0)>c'+D^+u_{c'}^-(x,0)$.
\end{proof}

Due to (5-a) of Proposition \ref{prop:twist}, we know that for any $p/q\in H^1(\T,\R)\cap\Q$ in lowest terms, there exists an interval $[c^-,c^+]$ which equals $D^-\beta(p/q)$. Moreover, for any $c\in(c^-,c^+)$, $\wt\cA(c)=\wt\cM(c)$ contains only $p/q-$periodic orbits and are both closed. Therefore,
\be
\T\backslash(\cA(c)\cap\Sigma_0)=\bigcup_{i\in\Lb}(r_i^-,r_i^+)
\ee
for an index set $\Lb\subset\N$. Within each {\sf gap} $(r_i^-,r_i^+)$, the following result holds:

\begin{lem}\label{lem:p/q-order}
For any $r\in(r_i^-,r_i^+)$, there exists a backward minimal curve $\xi$ (resp. $\xi'$) ending with $r$ and having the rotation symbol $(p/q)^+$ (resp. $(p/q)^-$), namely, $\xi$ (resp. $\xi'$) approximates to $\gamma_i^-$ (resp. $\gamma_i^+$) which is the $p/q-$minimal periodic curve ending with $r_i^-$ (resp. $r_i^+$). Then the left derivative obeys 
\be
\dot\xi'^-(0)<\dot\xi^-(0).
\ee
\end{lem}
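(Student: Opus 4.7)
The plan is to follow the contradiction-plus-crossing scheme of Lemma \ref{lem:order}, but to replace the ``different rotation numbers'' input (which forces crossings at scale $T\to\infty$) by the rotation-symbol structure of minimal heteroclinics, which forces a crossing already within a single period $q$.

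First I would dispose of the equality case $\dot\xi^-(0)=\dot\xi'^-(0)$. Both $\xi$ and $\xi'$ are backward $c$-semistatic, hence $C^1$ Euler--Lagrange solutions on $(-\infty,0]$ (recalled in Lemma \ref{lem:ex-curve-semi}). Sharing the common initial data $(r,\dot\xi^-(0),0)\in T\T\times\T$, EL uniqueness extended backward forces $\xi\equiv\xi'$ on $(-\infty,0]$, which contradicts the hypothesis that $\xi$ is $\alpha$-asymptotic to $\gamma_i^-$ while $\xi'$ is $\alpha$-asymptotic to the distinct periodic orbit $\gamma_i^+$.

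Next suppose, for contradiction, the reversed strict inequality $\dot\xi'^-(0)>\dot\xi^-(0)$. Lift to $\R$ so that $\bar\xi(0)=\bar\xi'(0)=\bar r\in(\bar r_i^-,\bar r_i^+)$. The first-order expansion gives
\[
\bar\xi'(-h)-\bar\xi(-h)=h\bigl(\dot\xi^-(0)-\dot\xi'^-(0)\bigr)+o(h)<0\quad\text{for all small }h>0.
\]
On the other hand, the uniform-in-$t$ version of the rotation symbols --- standard for $p/q$-minimal semistatic heteroclinics in the Birkhoff region of instability (Proposition \ref{prop:twist}(5) and \cite{Mat3}) --- gives $\bar\xi(t+q)-\bar\xi(t)>p$ and $\bar\xi'(t+q)-\bar\xi'(t)<p$ for every $t\in\R$. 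Specializing to $t=-q$ yields
\[
\bar\xi(-q)<\bar r-p<\bar\xi'(-q).
\]
By the intermediate value theorem there exists $s_0\in(-q,0)$ with $\bar\xi(s_0)=\bar\xi'(s_0)$. If the crossing is transverse ($\dot\xi(s_0)\neq\dot\xi'(s_0)$), Mather's Cross Lemma (as cited inside the proof of Lemma \ref{lem:order}) contradicts the semistatic minimality of $\xi$ and $\xi'$. If the crossing is tangent, EL uniqueness forces $\xi\equiv\xi'$ globally, again contradicting the distinct $\alpha$-limits. Either alternative being impossible, we conclude $\dot\xi'^-(0)<\dot\xi^-(0)$.

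The main obstacle is the uniform-in-$t$ extension of the $(p/q)^\pm$ rotation symbols beyond the $t=0$ defining inequality; this is what lets me compare $\bar\xi(-q)$ and $\bar\xi'(-q)$ in the display above. This uniformity is classical in Mather's theory for minimal heteroclinics between two $p/q$-Aubry periodic orbits --- the sign of $\bar\gamma(t+q)-\bar\gamma(t)-p$ is constant in $t$ along the whole orbit --- and is already embedded in Proposition \ref{prop:twist}(5-b) and in \cite{Mat3}. Once that structural input is in hand, the remainder of the argument is a short variant of Lemma \ref{lem:order}.
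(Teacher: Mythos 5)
Your proof is correct and follows essentially the same route the paper intends---the paper's own proof is the one-liner ``follows exactly the same procedure as Lemma \ref{lem:order}'', i.e.\ force a crossing and invoke Mather's Cross Lemma, and that is exactly your argument. You correctly identify the one adaptation that is needed (the two curves now share the rotation number $p/q$, so the crossing cannot be forced by unequal rotation numbers as in Lemma \ref{lem:order}) and supply the right substitute. One small simplification worth noting: you do not actually need the uniform-in-$t$ version of the rotation symbols to produce the crossing. The hypothesis already gives $\bar\xi(t)\to\bar\gamma_i^-(t)$ and $\bar\xi'(t)\to\bar\gamma_i^+(t)$ as $t\to-\infty$, and since $\bar\gamma_i^-<\bar\gamma_i^+$ this yields $\bar\xi'(t)>\bar\xi(t)$ for all sufficiently negative $t$; combined with $\bar\xi'(-h)<\bar\xi(-h)$ near $t=0$ (from the assumed reversed derivative inequality), the intermediate value theorem gives a crossing without appealing to the sign-constancy of $\bar\gamma(t+q)-\bar\gamma(t)-p$. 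That said, the uniform statement you invoke is indeed classical for minimal orbits (no self-intersections in the extended phase space), so your version is also sound; it merely localizes the crossing to a single period, which the argument does not require. Your separate treatment of the equality case via Euler--Lagrange uniqueness, and of a possible tangential crossing, are both correct and are details the paper leaves implicit.
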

\begin{proof}
The proof follows exactly the same procedure as Lemma \ref{lem:order}.
\end{proof}
Now we pick up a deck $[x_0,x_0+1]$ of the universal space $\R$, of which we can define two functions by 
\be\label{eq:upper-fun}
\wt u_{p/q+}(x):=\int_{x_0}^x\frac{\partial L}{\partial v}\big(y,\dot\xi_y^-(0),0\big)dy
\ee
\be\label{eq:lower-fun}
\wt u_{p/q-}(x):=\int_{x_0}^x\frac{\partial L}{\partial v}\big(y,\dot\xi'^-_y(0),0\big)dy
\ee
where $\xi_y$ (resp. $\xi'_y$) is the $(p/q)^+-$ (resp. $(p/q)^--$) backward minimal curve ending with $y$ if $y\in\T\backslash(\cA(c)\cap\Sigma_0)$, or $\xi_y=\xi'_y$ is the $(p/q)-$periodic minimal curve if $y\in\cA(c)\cap\Sigma_0$. Due to the uniform compactness of $(p/q)^\pm-$minimal curve, both $\wt u_{p/q\pm}(x)$ are Lipschitz continuous. Moreover, due to Lemma \ref{lem:p/q-order}, $\wt u_{p/q+}-\wt u_{p/q-}$ is nondecreasing on $[x_0,x_0+1]$. We will see that for all $c\in(c^-,c^+)$, the weak KAM solution $u_c^-$ formed in (\ref{eq:solution}) will be generated by these two functions:
\begin{lem}
Suppose $[c^-,c^+]=D^-\beta(p/q)$ is an interval, and $\{x_i\}_{i=0}^q$ is the $p/q-$periodic minimal configuration origining with $x_0\in(\cA([c^-,c^+])\cap\Sigma_0)$. For each $c\in(c^-,c^+)$, let's denote $\wt u_c^-(x):=u_c^-(x,0)+cx$ for $x\in[x_0, x_0+1]$, then
\begin{enumerate}
\item There exists a unique $x_i(c)\in(x_i,x_{i+m})$ for each $i\in\{0,1,2,\cdots,q-1\}$, where $m\in\{1,2,\cdots,q-1\}$ is choosen to make $x_{m}\in\R\slash[x_0,x_0+1]$ be closest point to $x_0$ of the configuration, such that 
\begin{enumerate}
\item $x_i(c)$ is monotonously increasing w.r.t. $c$;
\item $x_i(c)\rightarrow x_i$ as $c\rightarrow c^-$, and $x_i(c)\rightarrow x_{i+m}$ as $c\rightarrow c^+$;
\item $\wt u_c^-(x)=\wt u_{p/q+}(x)$ for $x\in[x_i,x_i(c)]$, and $\wt u_c^-(x)=\wt u_{p/q-}(x)$ for all $x\in[x_i(c),x_{i+m}]$.
\end{enumerate} 
\item For any $c<c'\in(c^-,c^+)$, $\wt u_c^-(x)\leq\wt u_{c'}^-(x)$ on $[x_0,x_0+1]$.
\end{enumerate}
\end{lem}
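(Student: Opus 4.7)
My plan is to establish part (1) first and then deduce part (2) from it. The foundation is Proposition \ref{prop:twist}(5-a): for every $c \in (c^-, c^+)$ the Aubry set $\wt\cA(c) = \wt\cM(c)$ consists only of the $p/q$-periodic minimal orbit through $\{x_0, x_1, \ldots, x_{q-1}\}$, so the connected components of the complement of $\cA(c) \cap \Sigma_0$ in the deck $[x_0, x_0+1]$ are precisely the gaps $(x_i, x_{i+m})$. For $x$ in such a gap, Lemma \ref{lem:ex-curve-semi} provides backward calibrated curves of $u_c^-$ ending at $(x,0)$, each of which is $c$-semistatic with rotation number $\alpha'(c) = p/q$. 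Such a curve cannot cross the periodic Aubry orbits, so it must be asymptotic backward to one of the two bounding orbits; by Lemma \ref{lem:p/q-order} it is then of $(p/q)^+$-type (asymptotic to the orbit through $x_i$) or $(p/q)^-$-type (asymptotic to that through $x_{i+m}$), with respective momenta $P_+(y) > P_-(y)$ everywhere on the gap.

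I next introduce on $[x_i, x_{i+m}]$ the two candidate branches
\[
\Phi^+(x;c) := \wt u_c^-(x_i) + \wt u_{p/q+}(x) - \wt u_{p/q+}(x_i), \quad \Phi^-(x;c) := \wt u_c^-(x_{i+m}) + \wt u_{p/q-}(x) - \wt u_{p/q-}(x_{i+m}),
\]
which track respectively the $\wt u_c^-$-values reached via $(p/q)^\pm$-type calibrated curves. The Peierls-barrier representation of $u_c^-$ together with the dichotomy above yields $\wt u_c^-(x) = \min\{\Phi^+(x;c),\Phi^-(x;c)\}$, and since $P_+ > P_-$ the difference $\Phi^+ - \Phi^-$ is strictly increasing in $x$. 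Mather's Cross Lemma (as invoked in the proof of Lemma \ref{lem:order}) forbids any continuum of coincidences, so there is a unique crossing point $x_i(c) \in (x_i, x_{i+m})$, and unpacking the definitions of $\wt u_{p/q\pm}$ gives (1c).

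For the monotonicity and limits in (1a)--(1b), I rewrite the switching identity as
\[
\wt u_{p/q+}(x_i(c)) - \wt u_{p/q-}(x_i(c)) = [\wt u_c^-(x_{i+m}) - \wt u_c^-(x_i)] + [\wt u_{p/q+}(x_i) - \wt u_{p/q-}(x_{i+m})],
\]
where only the first bracket depends on $c$. Evaluating $\wt u_c^-(x_{i+m}) - \wt u_c^-(x_i)$ along the periodic minimal curve and using that $\alpha(c)$ is linear on $[c^-, c^+]$ with slope $p/q$ (the Legendre dual of the assumption $D^-\beta(p/q) = [c^-, c^+]$), this bracket is affine in $c$ with slope $m \cdot p/q$. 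Since the left-hand side is strictly increasing in $x$, $x_i(c)$ depends strictly monotonically on $c$, yielding (1a) under the indexing convention for $m$. That the crossing stays strictly interior for $c\in(c^-,c^+)$ follows from $\int_{x_i}^{x_{i+m}} P_- dy < \wt u_c^-(x_{i+m})-\wt u_c^-(x_i) < \int_{x_i}^{x_{i+m}} P_+ dy$, which are the boundary (non)inequalities for the two endpoints. The endpoint limits in (1b) come from Proposition \ref{prop:twist}(5-b): at $c \to c^+$ (resp.\ $c^-$) the Aubry set absorbs the $(p/q)^+$-minimal (resp.\ $(p/q)^-$-minimal) heteroclinics, collapsing the opposite branch and pushing $x_i(c) \to x_{i+m}$ (resp.\ $x_i$); upper semi-continuity of $\wt\cN(c)$ from Lemma \ref{lem:upper-semi} makes the limits rigorous.

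Part (2) is then essentially a corollary of (1). A direct calculation along the periodic minimal curve gives $\wt u_c^-(x_i) = A_L(\gamma_{\mathrm{per}})|_{[0,i]} + i\,\alpha(c) + c x_0$, which is affine and non-decreasing in $c$ on $[c^-,c^+]$ by the choice $x_0 \in [0,1)$ in (\ref{eq:solution}). Within each gap, $\wt u_c^-$ and $\wt u_{c'}^-$ are piecewise gluings of the $c$-independent functions $\wt u_{p/q\pm}$ at switching points $x_i(c) \leq x_i(c')$, so the derivative of $\wt u_{c'}^- - \wt u_c^-$ is either $0$ (where both solutions use the same branch) or $P_+ - P_- > 0$ (on the interval between the two switching points); combined with the non-decreasing boundary values, this yields $\wt u_c^- \leq \wt u_{c'}^-$ on $[x_0, x_0+1]$. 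The main obstacle I anticipate is the sign-bookkeeping in the third step: identifying that $m\cdot p/q$ has the right sign to align the monotonicity direction with the statement, and verifying that the $c$-dependent constants implicit in the definitions (\ref{eq:upper-fun})--(\ref{eq:lower-fun}) and in (\ref{eq:solution}) collapse so that the identity in (1c) holds on the nose rather than only up to an additive constant.
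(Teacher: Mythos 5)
Your overall strategy mirrors the paper's: decompose $\wt u_c^-$ on each gap into the two branch functions coming from the $(p/q)^+$ and $(p/q)^-$ backward semi-static curves, locate the switching point $x_i(c)$ as the unique crossing of the two branches, and then argue monotonicity in $x$ and in $c$. Your route to the $x$-monotonicity (invoking the momentum inequality $P_+>P_-$ from Lemma~\ref{lem:p/q-order}, so that $\Phi^+-\Phi^-$ has positive derivative) is in fact cleaner and more direct than the paper's, which defines the pseudo-actions $\A_c^{p/q\pm}$ and carries out a three-case triangle-inequality argument to establish monotonicity of their difference; the paper's case analysis does, however, automatically cover the situation where the gap $(x_i,x_{i+m})$ contains additional points of $\cA(c)$ (e.g.\ a second $p/q$-minimal orbit), whereas your momentum argument implicitly assumes the open gap is entirely outside $\cA(c)$.

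The genuine gap is in your $c$-monotonicity step. You assert that $\wt u_c^-(x_{i+m})-\wt u_c^-(x_i)$ is affine in $c$ with slope $m\cdot p/q$, but this is wrong, and a sanity check with $p/q=0/1$ (e.g.\ the pendulum) shows why: there $\wt u_c^-(x_0+1)-\wt u_c^-(x_0)=\big[u_c^-(x_0,0)-u_c^-(x_0,0)\big]+c\cdot 1=c$, which has slope $1$, not $m\cdot p/q=0$. The error is a lift/deck bookkeeping mistake: in $\wt u_c^-(y)=u_c^-(y\bmod 1,0)+cy$, the $cy$ term is evaluated at the \emph{deck} representatives of $x_i$ and $x_{i+m}$ in $[x_0,x_0+1]$, while the $-c\langle\eta,\dot\gamma\rangle$ part of the calibrated action along the periodic curve is evaluated at the \emph{universal-cover} displacement $\ol x_{i+m}-\ol x_i$; these differ by the (nonzero) wrap-around integer, and you cancelled them as though they agreed. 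With your slope $m\cdot p/q$, the argument gives no monotonicity at all when $p=0$ and the wrong sign when $p<0$, contradicting the lemma. The paper avoids the issue by differentiating $\A_c^{p/q-}(x_{i+m},x)-\A_c^{p/q+}(x_i,x)$ directly, where the $\alpha(c)$ contributions of the two infinite sums cancel exactly and the $c$-coefficient is the deck gap $x_{i+m}-x_i>0$, which is always positive. Since part~(2) of your proof leans on the same $c$-dependence bookkeeping for $\wt u_c^-(x_i)$, that step inherits the same issue. You flagged precisely this "sign-bookkeeping" as the anticipated obstacle, but it is a real gap rather than a verification, and the slope needs to be recomputed before (1a)--(1b) and (2) are proved.
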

\begin{proof}
In the universal covering space $\R$ with a deck $[x_0,x_0+1]$, we consider the minimal configurations which are actually the intersectional points of backward semi-static curve with $\Sigma_0$. For any $x\in(x_i,x_{i+m})$ there are two backward minimal configurations $X_i(x)$, $Y_i(x)$ which approach $p/q-$periodic minimal configurations $X_i$ and $Y_i$, ending with $x_i$ and $x_{i+m}$ for $t=0$ respectively. We define
\[
\A_c^{p/q+}(x_i,x):=\lim_{k\rightarrow+\infty}\sum_{j=-1}^{-kq}h_c\big((X_i^j(x),j),(X_i^{j+1}(x),j+1)\big),\quad c\in[c^-,c^+]
\]
and $\A_c^{p/q-}(x_{i+m},x)$ can be defined in the similar way. Notice that $X_i^j(x)-X_i\rightarrow 0$ and $Y^j_i(x)-Y_i^j\rightarrow 0$ as $j\rightarrow-\infty$. The first obeservation is that:
\be
\A_c^{p/q-}(x_{i+m},x)-\A_c^{p/q+}(x_i,x)&=&\lim_{k\rightarrow+\infty}\sum_{j=-1}^{-kq}\Big[h\big((Y_i^j(x),j),(Y_i^{j+1}(x),j+1)\big)\nonumber\\
& &-h\big((X_i^j(x),j),(X_i^{j+1}(x),j+1)\big)\Big]+c(x_{i+m}-x_i),
\ee
which indicates this subtraction is {\sf strictly increasing} w.r.t. $c$ since $x_{i+m}-x_i>0$ for all $i\in\{0,1,\cdots,q-1\}$. Another observation is that for each $c\in(c^-,c^+)$, $\A_c^{p/q-}(x_{i+m},x)-\A_c^{p/q+}(x_i,x)$ is {\sf non-increasing} w.r.t. $x\in(x_i,x_{i+m})$. To prove this, we now pick $x<y$ being two points in $(x_i,x_{i+m})$, with the associated $X_i(x)$ and $X_i(y)$ (resp. $Y_i(x)$ and $Y_i(y)$) minimal configurations. We will analyse all the possible cases in the following:\vspace{5pt}

{\sf Case I.} Suppose there exists a gap $(r_k^-,r_k^+)\subset (x,y)$, if so, we have
\be
& &\Big[\A_c^{p/q-}(x_{i+m},x)-\A_c^{p/q+}(x_i,x)\Big]-\Big[\A_c^{p/q-}(x_{i+m},y)-\A_c^{p/q+}(x_i,y)\Big]\\
&=&\Big[\A_c^{p/q-}(x_{i+m},r_k^+)+\A_c^{p/q-}(r_k^+,r_k^-)+\A_c^{p/q-}(r_k^-,x)-\A_c^{p/q+}(x_i,x)\Big]\nonumber\\
& &-\Big[\A_c^{p/q-}(x_{i+m},y)-\A_c^{p/q+}(x_i,r_k^-)-\A_c^{p/q+}(r_k^-,r_k^+)-\A_c^{p/q+}(r_k^+,y)\Big]\nonumber\\
&=&\Big[\A_c^{p/q+}(x_i,r_k^-)+\A_c^{p/q+}(r_k^-,r_k^+)+\A_c^{p/q+}(r_k^+,y)\nonumber\\
& &+\A_c^{p/q-}(x_{i+m},r_k^+)+\A_c^{p/q-}(r_k^+,r_k^-)+\A_c^{p/q-}(r_k^-,x)\Big]\nonumber\\
& &-\Big[\A_c^{p/q-}(x_{i+m},y)+\A_c^{p/q+}(x_i,x)\Big]\nonumber
\ee
\be
&>&\Big[\underbrace{\A_c^{p/q+}(x_i,r_k^-)+\A_c^{p/q-}(r_k^-,x)}+\underbrace{\A_c^{p/q+}(r_k^+,y)+\A_c^{p/q-}(x_{i+m},r_k^+)}\Big]\nonumber\\
& &-\Big[\A_c^{p/q-}(x_{i+m},y)+\A_c^{p/q+}(x_i,x)\Big]\nonumber
\ee
\be
&\geq&\Big[\A_c^{p/q+}(x_i,x)+h_c^\infty((r_k^-,0),(r_k^-,0))+\A_c^{p/q-}(x_{i+m},y)+h_c^\infty((r_k^+,0),(r_k^+,0))\Big]\nonumber\\
& &-\Big[\A_c^{p/q-}(x_{i+m},y)+\A_c^{p/q+}(x_i,x)\Big]\nonumber\\
&=&0\nonumber
\ee
because $r_k^\pm\in\cA(c)\cap\Sigma_0$. So we proved $\A_c^{p/q-}(x_{i+m},x)-\A_c^{p/q+}(x_i,x)>\A_c^{p/q-}(x_{i+m},y)-\A_c^{p/q+}(x_i,y)$ for this case.\vspace{5pt}

{\sf Case II.} Suppose $x<y$ with $x\in(r_k^-,r_k^+)$, $r_k^+<y$ and $(x,y)$ containing no gap. For this case, 
\be
& &\Big[\A_c^{p/q-}(x_{i+m},x)-\A_c^{p/q+}(x_i,x)\Big]-\Big[\A_c^{p/q-}(x_{i+m},y)-\A_c^{p/q+}(x_i,y)\Big]\\
&=&\Big[\A_c^{p/q-}(x_{i+m},r_k^+)+\A_c^{p/q-}(r_k^+,x)-\A_c^{p/q+}(x_i,x)\Big]\nonumber\\
& &-\Big[\A_c^{p/q-}(x_{i+m},y)-\A_c^{p/q+}(x_i,r_k^-)-\A_c^{p/q+}(r_k^-,r_k^+)-\A_c^{p/q+}(r_k^+,y)\Big]\nonumber\\
&=&\Big[\A_c^{p/q+}(x_i,r_k^-)+\A_c^{p/q+}(r_k^+,y)+\A_c^{p/q-}(x_{i+m},r_k^+)+\underbrace{\A_c^{p/q+}(r_k^-,r_k^+)+\A_c^{p/q-}(r_k^+,x)}\Big]\nonumber
\ee
\be
& &-\Big[\A_c^{p/q-}(x_{i+m},y)+\A_c^{p/q+}(x_i,x)\Big]\nonumber\\
&>&\Big[\A_c^{p/q+}(x_i,r_k^-)+\A_c^{p/q+}(r_k^+,y)+\A_c^{p/q-}(x_{i+m},r_k^+)+h_c^\infty((r_k^+,0),(r_k^+,0))\nonumber
\ee
\be
& &+\A_c^{p/q+}(r_k^-,x)\Big]-\Big[\A_c^{p/q-}(x_{i+m},y)+\A_c^{p/q+}(x_i,x)\Big]\nonumber\\
&=&\underbrace{\A_c^{p/q+}(r_k^+,y)+\A_c^{p/q-}(x_{i+m},r_k^+)}-\A_c^{p/q-}(x_{i+m},y)\nonumber\\
&>&\A_c^{p/q-}(x_{i+m},y)+h_c^\infty((r_k^+,0),(r_k^+,0))-\A_c^{p/q-}(x_{i+m},y)\nonumber\\
&=&0.\nonumber
\ee
So we also proved $\A_c^{p/q-}(x_{i+m},x)-\A_c^{p/q+}(x_i,x)>\A_c^{p/q-}(x_{i+m},y)-\A_c^{p/q+}(x_i,y)$ for this case.\vspace{5pt}

{\sf Case III.} Tha last case is $[x,y]\subset\cA(c)\cap\Sigma_0$. If so, we can find a maximal interval $[\zeta^-,\zeta^+]\subset\cA(c)\cap\Sigma_0$ containing $[x,y]$. Actually, we can prove that 
\be\label{eq:case-3}
\A_c^{p/q-}(x_{i+m},x)-\A_c^{p/q+}(x_i,x)=\A_c^{p/q-}(x_{i+m},y)-\A_c^{p/q+}(x_i,y)
\ee
for any two $x<y\in[\zeta^-,\zeta^+]$. For this purpose, we have
\be
& &\Big[\A_c^{p/q-}(x_{i+m},x)-\A_c^{p/q+}(x_i,x)\Big]-\Big[\A_c^{p/q-}(x_{i+m},y)-\A_c^{p/q+}(x_i,y)\Big]\\
&=&\Big[\A_c^{p/q-}(x_{i+m},\zeta^+)+\A_c^{p/q-}(\zeta^+,x)-\A_c^{p/q+}(x_i,\zeta^-)-\A_c^{p/q+}(\zeta^-,x)\Big]\nonumber\\
& &-\Big[\A_c^{p/q-}(x_{i+m},\zeta^+)+\A_c^{p/q-}(\zeta^+,y)-\A_c^{p/q+}(x_i,\zeta^-)-\A_c^{p/q+}(\zeta^-,y)\Big]\nonumber\\
&=&\Big[\A_c^{p/q-}(\zeta^+,x)+\A_c^{p/q+}(\zeta^-,y)\Big]-\Big[\A_c^{p/q+}(\zeta^-,x)+\A_c^{p/q-}(\zeta^+,y)\Big].\nonumber
\ee
On one side, 
\be
& &\Big[\A_c^{p/q-}(\zeta^+,x)+\A_c^{p/q+}(\zeta^-,y)\Big]-\Big[\A_c^{p/q+}(\zeta^-,x)+\A_c^{p/q-}(\zeta^+,y)\Big]\\
&\leq&\Big[\A_c^{p/q-}(\zeta^+,y)+\A_c^{p/q-}(y,x)+\A_c^{p/q+}(\zeta^-,x)+\A_c^{p/q+}(x,y)\Big]\nonumber\\
& &-\Big[\A_c^{p/q+}(\zeta^-,x)+\A_c^{p/q-}(\zeta^+,y)\Big]\nonumber\\
&=& d_c(x,y)=0\nonumber
\ee
due to (\ref{eq:aubry-clas}) and (6) of Proposition \ref{prop:twist}. On the other side, 
\be
& &\Big[\A_c^{p/q-}(\zeta^+,x)+\A_c^{p/q+}(\zeta^-,y)\Big]-\Big[\A_c^{p/q+}(\zeta^-,x)+\A_c^{p/q-}(\zeta^+,y)\Big]\\
&\geq&\Big[\A_c^{p/q-}(\zeta^+,x)+\A_c^{p/q+}(\zeta^-,y)\Big]-\Big[\A_c^{p/q+}(\zeta^-,y)+\A_c^{p/q+}(y,x)\nonumber\\
& &+\A_c^{p/q-}(\zeta^+,x)+\A_c^{p/q-}(x,y)\Big]\nonumber\\
&=& -d_c(x,y)=0.\nonumber
\ee
This two inequalities imply (\ref{eq:case-3}) together.\\

Since we proved $\A_c^{p/q-}(x_{i+m},x)-\A_c^{p/q+}(x_i,x)$ is non-increasing of $x\in(x_i,x_{i+m})$, and we have 
\[
\lim_{x\rightarrow x_i}\A_c^{p/q-}(x_{i+m},x)-\A_c^{p/q+}(x_i,x)>0,
\]
\[
\lim_{x\rightarrow x_{i+m}}\A_c^{p/q-}(x_{i+m},x)-\A_c^{p/q+}(x_i,x)<0
\]
due to (5-b) of Proposition \ref{prop:twist}. Then due to the continuity of $\A_c^{p/q-}(x_{i+m},x)-\A_c^{p/q+}(x_i,x)$ there must be a $x_i(c)\in(x_i,x_{i+m})$ (for the interval $[\zeta_i^-(c),\zeta_i^+(c)]$ we pick the middle point) being the zero of  $\A_c^{p/q-}(x_{i+m},x)-\A_c^{p/q+}(x_i,x)$.

Due to the first observation, the function 
$c\rightarrow x_i(c)$ is strictly increasing. Besides, from the definition of $u_c^-$ in (\ref{eq:solution}), we know $\big(x,ex(D^+u_c^-(x,0),0\big)\in T\T\times\{0\}$ decides backward semi-static curve $\gamma_x^c$ which decides $(p/q)^+-$  (or $(p/q)^--$) minimal configuration. Due to previous analysis, we know $\gamma_x^c$ is $(p/q)^+-$type if $x\in(x_i,x_i(c))$ and $(p/q)^--$type if $x\in(x_i(c),x_{i+m})$. Accodingly, we have
\[
\begin{aligned}
&ex(D^+\wt u_c^-(x,0))=ex(D^+\wt u_{p/q+}(x)),\quad\forall x\in(x_i,x_i(c)),\\
&ex(D^+\wt u_c^-(x,0))=ex(D^+\wt u_{p/q-}(x)),\quad\forall x\in(x_i(c),x_{i+m}).
\end{aligned}
\]
From Lemma \ref{lem:p/q-order} we can instantly get conclusions (1-a), (1-c) and (2) of the current Lemma. At last, due to (5-b) of Proposition \ref{prop:twist}, we know that $x_i(c)\rightarrow x_{i+m}$ as $c\rightarrow c^+$ and $x_i(c)\rightarrow x_{i}$ as $c\rightarrow c^-$. So (1-b) of the  current Lemma will be proved.
\end{proof}

Notice that the function $x_i(c)$ in previous Lemma is just strictly increasing, which may not be absolutely continuous. That causes a difficulty to prove the H\"older continuity of $u_c^-$ about $c$ directly. So we have to introduce a substitution parameter of which $u_c^-$ is regularly dependent.
Benefit from previous Lemmas, now we introduce the following parameter
\be\label{eq:sigma}
\sigma(c)=\int_0^1\Big(\wh u_{c}^-(x)-\wh u_0^-(x)\Big)dx
\ee
which is monotonously increasing of $c$ and satisfies $\sigma(0)=0$. Here 
\[
\wh u_c^-(x):=\wt u_c^-(x)-\wt u_c^-(0),\quad \forall c\in H^1(\T,\R)
\]
 is a rectified weak KAM solution, and the purpose we doing so is to fixed $\wh u_c^-(0)\equiv0$ for all $c\in\R$. Therefore, we can unify all the $\{\wh u_c^-\}_{c\in\R}$ in the same deck $[0,1]$ of $\T$. 
\begin{proof}{\it of Theorem \ref{thm:holder}}:
As $c\rightarrow \sigma(c)$ is strictly increasing and continuous, so the inverse function $\sigma\rightarrow c(\sigma)$ is strictly increasing and continuous as well. Therefore, we make
\[
\|\wh u_{c(\sigma_1)}^--\wh u_{c(\sigma_2)}^-\|:=\max_{x\in[0,1]}|\wh u_{c(\sigma_1)}^-(x)-\wh u_{c(\sigma_2)}^-(x)|
\]
such that 
\be
|\sigma_1-\sigma_2|&=&\Big|\int_0^1\Big(\wh u_{c(\sigma_1)}^-(x)-\wh u_{c(\sigma_2)}^-(x)\Big)dx\Big|\nonumber\\
&\geq&\frac{1}{2|C_{\sigma_1}+C_{\sigma_2}|}\|\wh u_{c(\sigma_1)}^--\wh u_{c(\sigma_2)}^-\|^2
\ee
where $C_{\sigma_1}$ (resp. $C_{\sigma_2}$) is the Lipschitz constant of $\wh u_{c(\sigma_1)}^-$ (resp. $\wh u_{c(\sigma_2)}^-$). That leads to 
\be\label{eq:hull-solution}
\|\wh u_{c(\sigma_1)}^--\wh u_{c(\sigma_2)}^-\|\leq\sqrt{2|C_{\sigma_1}+C_{\sigma_2}|}|\sigma_1-\sigma_2|^{1/2}.
\ee
Notice that $\wh u_c^-(x)=u_c^-(x,0)-u_c^-(0,0)+cx$, where $u_c^-(x,0)$ is established by (\ref{eq:solution}). Then we get
the $1/2-$H\"older continuity of $\{\wh u_{c(\sigma)}^-(x,0)-\wh u_{c(\sigma)}^-(0,0)\}_{\sigma\in\R}$. So Theorem \ref{thm:holder} get proved.
\end{proof}

\vspace{20pt}

\section{Global existence and uniqueness of generalized characteristics}\label{s4}

\vspace{20pt}

For the equation (\ref{eq:gc}), we have already proved the existence of regular GC and singular GC in Sec.\ref{s2}. Actually, for any $(x,t)\in\T^2$, we can find an {\sf extended GC} formed by
\[
\wt\gamma(s):=\begin{pmatrix}
      \gamma(s)    \\
      t+s  
\end{pmatrix}\in\T^2
\]
where $\gamma(s)$ is a solution of (\ref{eq:gc}) in a suitable time interval. We will show the well definiteness of each extended GC by the following analysis.\\

\begin{proof}{\it of Theorem \ref{thm:g-c-t}}: 
The proof is twofold. First, if $(x,t)$ is a differentiable point of $u_c^-$, then due to lemma \ref{lem:ex-curve-semi}, there exists a unique backward semi-static curve $\gamma^-$ calibrates $u_c^-$ ending with $(x,t)$, which satisfies (\ref{eq:gc}) for $s\in(-\infty,0]$. Since $\gamma^-$ approximates $\cM(c)$ as $s\rightarrow-\infty$, then the rotation number $\rho(\gamma^-)=\rho(\cM(c))$, which implies $\rho(\gamma^-)=\alpha'(c)$ due to (1) of Proposition \ref{prop:twist}.\\

Second, if $(x,t)$ is a non-differentiable point of $u_c^-$, due to Proposition \ref{prop:exi}, there exists a singular GC $\gamma^+$ starting from $(x,t)$, which exists at least for time $s\in[0,\sigma]$. Following the idea of \cite{D}, we can prove that $(\gamma^+(s),t+s)$ is also non-differentiable point of $u_c^-$ for all $s\in[0,\sigma]$. If not, we can find a $\iota\in(0,\sigma]$, such that $u_c^-$ is differentiable at $(\gamma^+(\iota),t+\iota)$. Then once again we can find a backward semi-static curve $\gamma^-$ ending with $(\gamma^+(\iota),t+\iota)$, which satisfies (\ref{eq:gc}) for all $s\in(-\infty,0]$. However, due to Proposition \ref{prop:uni}, $\gamma^-\big|_{[-\iota,0]}$ has to equal $\gamma^+\big|_{[0,\iota]}$. That contradicts the non-differentiability of $u_c^-$ at the point $(x,t)$. So we proved the non-differentiability of $u_c^-$ for all $(\gamma^+(s),t+s)$ with $s\in[0,\sigma]$. On the other side, if $\sigma<+\infty$, then the non-differentiability implies that we can expand $\gamma^+$ for a bigger interval $[0,\sigma']$ with $\sigma'>\sigma$. Repeating previous procedure we conclude that $\gamma^+$ has to be a singular GC for all $s\in[0,+\infty)$.

Notice that $\gamma^+$ has no self-intersectional point, unless it's a periodic curve. For any cases, $\rho(\gamma^+)$ is well defined and equals to $\alpha'(c)$. The proof of this part is also twofold. First, if $\alpha'(c)=p/q\in\Q$, there must be an interval $[c^-,c^+]$ equal to $D^-\beta(p/q)$ due to (4) of Proposition \ref{prop:twist}. If $\rho(\gamma^+)\neq p/q$, then $\gamma^+$ has an intersectional point with $\cA(c)$. That contradicts the non-differentiability of $u_c^-$ along $\gamma^+$. So we proved $\rho(\gamma^+)=\alpha'(c)$ for the rational case.

If $\alpha'(c)=\om\in\R\backslash\Q$, then $\beta'(\om)=c$ due to (3) of Proposition \ref{prop:twist}. For this case, $\cM(c)=\cA(c)$ has to be a {\sf Denjoy minimal set}. So the extended $\wt\gamma^+$ has to lie in the gap of $\cA(c)$. As each gap of $\cA(c)$ has to shrink to zero as $s\rightarrow+\infty$ (Conclusion 4.6 of \cite{Ba}), so $\gamma^+$ has to approximate to $\cA(c)$ as $s\rightarrow+\infty$, so $\rho(\gamma^+)=\alpha'(c)$ for the irrational case.
\end{proof}


\vspace{20pt}

\section{$c-$equivalence of adjacent weak KAM solutions with singularities}\label{s5}

\vspace{20pt}

This section will be devoted to prove Theorem \ref{thm:s-t}. Throughout this section, we will restrict all the notions to the section $\Sigma_0:=\{t=0\}$. With the help of the conclusions proved in previous Sec. \ref{s3} and Sec. \ref{s4}, we figure out a modern way to explain the target theorem, which is more visualized to a high dimensional generalization.

\begin{lem}
For any $c\in(a_i,b_i)$ where $(a_i,b_i)$ is an instability interval, $\cN(c)\cap\Sigma_0$ is of zero homology class, namely, there exists an open neighborhood $U$ of $\cN(c)\cap\Sigma_0$, such that the inclusion map $H_1(U,\R)\hookrightarrow H_1(\T,\R)$ is trivial.
\end{lem}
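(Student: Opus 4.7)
The plan is to exhibit a non-trivial ``gap'' in $\cN(c)\cap\Sigma_0$, from which the homological triviality of a suitable neighborhood follows at once. Since $c\in(a_i,b_i)\subset\mathfrak{I}$, equation (\ref{eq:ehj}) admits no classical solution at this parameter, so the weak KAM solution $u_c^-$ fails to be $C^1$ on $\T^2$. Because $u_c^-$ is semi-concave with linear modulus and a semi-concave function that is everywhere differentiable is automatically $C^1$, this forces the existence of at least one non-differentiable point $(x_*,t_*)\in\T^2$ of $u_c^-$.

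Next, I will propagate this singularity onto the section $\Sigma_0=\{t=0\}$. Applying Proposition \ref{prop:exi} at $(x_*,t_*)$ produces a singular generalized characteristic $\eta:[0,+\infty)\to\T$, and the argument used in the proof of Theorem \ref{thm:g-c-t} shows that every point $(\eta(s),t_*+s)$ is non-differentiable for $u_c^-$: otherwise Proposition \ref{prop:uni} together with Lemma \ref{lem:ex-curve-semi} would produce a backward calibrated curve reaching $(x_*,t_*)$ and make $u_c^-$ differentiable there. Choosing $s_0\in[0,1)$ with $t_*+s_0\equiv 0\pmod{1}$ thus gives a non-differentiable point $x_0:=\eta(s_0)\in\Sigma_0$ of $u_c^-$.

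By Proposition \ref{prop:tw-ma} (3), $u_c^-$ is differentiable at every point of $\cN(c)$, so $x_0\notin\cN(c)\cap\Sigma_0$. Since $\wt\cN(c)$ is closed in $T\T\times\T$ (Lemma \ref{lem:upper-semi}), its natural image in $\T\times\T$ intersected with $\{t=0\}$ is a closed subset of $\Sigma_0\cong\T$, and its complement is an open set containing $x_0$. Pick an open interval $I\subset\T$ with $x_0\in I$ and $I\cap(\cN(c)\cap\Sigma_0)=\emptyset$, choose a closed sub-interval $K\subset I$ whose interior still contains $x_0$, and set $U:=\T\setminus K$. Then $U$ is open in $\T$, contains $\cN(c)\cap\Sigma_0$, and is homeomorphic to an open interval, hence contractible; in particular $H_1(U,\R)=0$, so the inclusion map $H_1(U,\R)\hookrightarrow H_1(\T,\R)$ is trivial.

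The main point requiring care is the second step, namely transferring the singularity to the specific time section $t=0$; this relies on the forward invariance of the singular set of $u_c^-$ under generalized characteristics, which was established in Section \ref{s4} via Proposition \ref{prop:uni}. Once this is in hand, the homological conclusion is a standard point-set-topology observation on the circle.
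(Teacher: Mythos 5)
Your proposal is correct and follows essentially the same route as the paper: use the non-$C^1$ regularity of $u_c^-$ on the instability interval to produce a singularity, conclude that $\cN(c)\cap\Sigma_0\subsetneq\T$ (since $u_c^-$ is differentiable on $\cN(c)$), and then take the complement of a small closed arc inside the gap to get a contractible open neighborhood $U$. The paper's own proof is a three-line sketch; you supply the two details it leaves implicit, namely that everywhere-differentiability of a semiconcave function forces $C^1$ (hence a non-differentiable point $(x_*,t_*)$ exists) and that the singularity can be transported onto the section $\Sigma_0$ by running the singular generalized characteristic forward and invoking the forward-invariance of the singular set established in the proof of Theorem~\ref{thm:g-c-t}. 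One small citation slip: the closedness of $\wt\cN(c)$ is not a consequence of Lemma~\ref{lem:upper-semi} (which gives upper semi-continuity in $c$) but is the standard fact stated just before that lemma; this does not affect the argument.
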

\begin{proof}
Due to the definition of instability interval, we know that there must be singular GC of $u_c^-$ for $c\in(a_i,b_i)$. If so, we know that $\cN(c)\cap\Sigma_0$ has to be a strict closed subset of $\T$. Therefore, there exists an open neighborhood $U$ of $\cN(c)\cap\Sigma_0$, such that $U\subsetneq\T$. So $H_1(U,\R)$ is homologically trivial.
\end{proof}

\begin{lem}\label{lem:open}
For $c,c'\in(a_i,b_i)$ with $|c-c'|\ll 1$, there exists a unified open neighborhood $U$ containing both $\cN(c)\cap\Sigma_0$ and $\cN(c')\cap\Sigma_0$, which is homologically trivial.
\end{lem}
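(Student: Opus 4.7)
The plan is to combine the previous lemma (which says $\mathcal{N}(c)\cap\Sigma_0$ is a proper closed subset of $\T$ whenever $c\in(a_i,b_i)$) with the upper semi-continuity statement of Lemma \ref{lem:upper-semi}. The intuition is that if $\mathcal{N}(c)\cap\Sigma_0$ already misses an open interval of $\T$, then by Hausdorff-continuity of $\widetilde{\mathcal{N}}$ at $c$, the nearby $\mathcal{N}(c')\cap\Sigma_0$ must also miss an open sub-interval, so both sit inside a common open proper subset $U\subsetneq\T$, which is automatically of zero homology class.

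In detail, the first step is to fix an open set $V\subset\T\times\T$ with $\widetilde{\mathcal{N}}(c)\subset V$ and $\pi_x\bigl(V\cap\Sigma_0\bigr)=:U_0$ a proper open subset of $\T$, i.e. $\overline{U_0}\subsetneq\T$. Such $V$ exists because by the previous lemma the projection $\mathcal{N}(c)\cap\Sigma_0$ is a strict closed subset of $\T$, so we may first fix a proper open set $U_0\subsetneq\T$ containing it and then pull back a tubular neighborhood in $T\T\times\T$. The second step is to invoke Lemma \ref{lem:upper-semi}: for $|c-c'|$ small enough, $\widetilde{\mathcal{N}}(c')$ lies in the same open set $V$, and therefore
\[
\mathcal{N}(c')\cap\Sigma_0 \;=\; \pi_x\bigl(\widetilde{\mathcal{N}}(c')\cap\Sigma_0\bigr) \;\subset\; U_0.
\]
Setting $U:=U_0$, we obtain a single open neighborhood containing both $\mathcal{N}(c)\cap\Sigma_0$ and $\mathcal{N}(c')\cap\Sigma_0$.

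For the homological triviality, since $\overline{U}\subsetneq\T$, the open set $U$ is a disjoint union of open arcs in $\T$, each of which is contractible. Consequently $H_1(U,\R)=0$ and in particular the inclusion $H_1(U,\R)\hookrightarrow H_1(\T,\R)$ is the trivial map, which is exactly the homological triviality demanded in the statement.

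The only delicate point is matching the Hausdorff-topology convergence in $T\T\times\T$ (which is what Lemma \ref{lem:upper-semi} delivers) with an open neighborhood statement purely on the base $\Sigma_0\cong\T$; this is handled by choosing the enveloping set $V$ to be a pull-back $\pi_x^{-1}(U_0)\cap\Sigma_0$ so that control on $\widetilde{\mathcal{N}}(c')$ at the phase-space level translates directly into control on $\mathcal{N}(c')\cap\Sigma_0$ on $\T$. No singularity of $u_c^-$ or $u_{c'}^-$ beyond what is already guaranteed by $c,c'\in(a_i,b_i)$ is needed, so the argument is entirely topological once upper semi-continuity is in hand.
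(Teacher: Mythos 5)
Your proof is correct and takes essentially the same approach as the paper: the paper's own proof is a one-line citation of Lemma \ref{lem:upper-semi} (upper semi-continuity of $c\mapsto\wt\cN(c)$) to extract a unified neighborhood, and you simply spell out how upper semi-continuity in phase space is transferred down to the section $\Sigma_0$. One small typo — your set $V$ should live in $T\T\times\T$, not $\T\times\T$ — and the phrase "pull back a tubular neighborhood" leaves implicit the point that $\wt\cN(c)$ is closed and lies over a Lipschitz graph, so that $\wt\cN(c)\cap\Sigma_0$ is compact and the Hausdorff-closeness of $\wt\cN(c')$ indeed forces $\cN(c')\cap\Sigma_0$ into $U_0$; but the underlying compactness/Hausdorff argument is correct and is exactly what the paper's citation is tacitly invoking.
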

\begin{proof}
Due to Lemma \ref{lem:upper-semi}, we know that $c\rightarrow\cN(c)\cap\Sigma_0$ is upper semi-continuous. So for $c,c'\in(a_i,b_i)$ sufficiently close to each other, there must exist a unified open neighborhood $U$ containing both $\cN(c)\cap\Sigma_0$ and $\cN(c')\cap\Sigma_0$.
\end{proof}

The existence of a unified neighborhood $U$ gives us chance to conclude the following:
\begin{lem}\label{lem:class}
For $c,c'\in(a_i,b_i)$ with $|c-c'|\ll 1$, there exists a closed $1-$form $\mu(x)dx$ satisfying $supp(\mu)\cap U=\emptyset$ and $[\mu]=c'-c$.
\end{lem}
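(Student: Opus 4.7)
The plan is to construct $\mu$ explicitly as a smooth bump on $\T$, supported in a gap of the Ma\~n\'e set $\cN(c)$ and scaled to achieve the prescribed cohomology class. Since $\T$ is one-dimensional, every smooth 1-form is automatically closed and $H^1(\T,\R)\cong\R$ is realized by $[\mu(x)dx]=\int_0^1\mu(x)\,dx$; thus the task reduces to producing a smooth $\mu:\T\to\R$ whose integral equals $c'-c$ and whose support is disjoint from $U$. The key is therefore to locate a genuine open arc $(\alpha,\beta)\subset\T\setminus U$ in which to place the bump.

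To exhibit such an arc, I would first show that $\cN(c)\cap\Sigma_0$ admits an open gap. Since $c\in(a_i,b_i)$ lies in an instability interval, equation (\ref{eq:ehj}) has no classical solution, so singular GCs of $u_c^-$ must exist. Splitting by the rotation number $\alpha'(c)$: if $\alpha'(c)=p/q\in\Q$, then by Proposition \ref{prop:twist}(5) $D^-\beta(p/q)=[c^-,c^+]$ and for $c$ in its interior $\cN(c)=\cM(c)$ consists only of $p/q$-periodic orbits, so $\cN(c)\cap\Sigma_0$ is a finite set; if $\alpha'(c)\in\R\setminus\Q$, Proposition \ref{prop:twist}(3) gives $\cN(c)=\cA(c)$, which cannot be an invariant circle because $c\in(a_i,b_i)$, hence must be a Denjoy (Cantor-like) minimal set on $\T$. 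In both cases $\cN(c)\cap\Sigma_0$ is nowhere dense and so admits many open gaps. Now I apply upper semi-continuity (Lemma \ref{lem:upper-semi}): fixing a gap $(\alpha,\beta)$ of $\cN(c)\cap\Sigma_0$ and a small $\varepsilon>0$, the set $\cN(c')\cap\Sigma_0$ also avoids $[\alpha+\varepsilon,\beta-\varepsilon]$ provided $|c-c'|$ is sufficiently small. Refining the unified neighborhood from Lemma \ref{lem:open}, I shrink $U$ so that $U\cap[\alpha+\varepsilon,\beta-\varepsilon]=\emptyset$ while still containing both $\cN(c)\cap\Sigma_0$ and $\cN(c')\cap\Sigma_0$ and remaining homologically trivial.

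With the subarc $I:=(\alpha+\varepsilon,\beta-\varepsilon)\subset\T\setminus U$ in hand, I choose any smooth $\phi:\T\to\R$ supported in $I$ with $\int_0^1\phi(x)\,dx=1$, and set $\mu(x):=(c'-c)\phi(x)$. Then $\supp(\mu)\subset I$ is disjoint from $U$, and the cohomology class $[\mu(x)dx]=\int_0^1\mu(x)\,dx=c'-c$ under the canonical identification $H^1(\T,\R)\cong\R$, finishing the argument. The main subtlety is ensuring that the gap exists, which relies on the structural classification of $\cN(c)$ in each rotation-number regime; once the gap is secured, the rest is a routine bump-function construction.
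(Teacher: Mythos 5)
Your proof is correct, and since the paper's own justification is the single sentence \emph{``This conclusion is obvious since $U$ is homologically trivial,''} what you have written is essentially a fleshed-out version of the same idea: find an open arc in $\T\setminus U$, put a smooth bump $1$-form of total integral $c'-c$ there, and use $H^1(\T,\R)\cong\R$ via integration. You also correctly flag a point the paper glosses over: homological triviality of an open $U\subsetneq\T$ means only that $U$ omits some point, which by itself does not guarantee $\T\setminus U$ has nonempty interior (e.g.\ $U=\T\setminus C$ with $C$ a Cantor set), and with an empty-interior complement no nonzero smooth bump could be supported there. Your refinement of $U$ (shrinking it while keeping $\cN(c)\cap\Sigma_0$ and $\cN(c')\cap\Sigma_0$ inside, using upper semi-continuity) cleanly fixes this, and is compatible with how $U$ is used downstream in Lemma \ref{lem:c-c'}.

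One simplification worth noting: the rotation-number case analysis (rational vs.\ irrational, periodic vs.\ Denjoy) is more than is needed. The preceding lemma already establishes that $\cN(c)\cap\Sigma_0$ is a \emph{proper} closed subset of $\T$; any proper closed subset of $\T$ automatically has an open complementary arc $(\alpha,\beta)$, so you can go straight to the upper semi-continuity step without classifying the structure of $\cN(c)$. That said, the extra detail is harmless, and your construction of $\mu=(c'-c)\phi$ with $\phi$ a bump supported in $(\alpha+\varepsilon,\beta-\varepsilon)$ and $\int_{\T}\phi=1$ is exactly right.
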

\begin{proof}
This conclusion is obvious since $U$ is homologically trivial.
\end{proof}

With the help of previous Lemma, now we establish a rectified variational calculus:
\begin{lem}[Lemma 2.3 of \cite{CY1}]\label{lem:c-c'}
For the modified Lagrangian
\[
L_{\eta,\mu,\rho}(x,v,t):=L(x,v,t)-\langle\eta(x)+\rho(t)\mu(x),v\rangle,\quad(x,v,t)\in T\T\times\R,
\]
where $[\eta]=c$, $[\mu]=c'-c$ and $\rho(t):\R\rightarrow\R$ is a smooth transitional function satisfying
\[
\rho(t)=\left\{
\begin{aligned}
 &1 \text{ for } t\in[\delta,\infty),\\
 & 0 \text{ for } t\in(-\infty,0],
\end{aligned}
\right.
\]
we can define an action function by
\be\label{eq:finite-pseudo}
h_{\eta,\mu,\rho}^{T_0,T_1}(m,m')=\inf_{\substack{\gamma(-T_0)=m\\ \gamma(T_1)=m'}}\int_{-T_0}^{T_1}L_{\eta,\mu,\rho}(\gamma(t),\dot{\gamma}(t),t)dt+T_0\alpha(c(s_0))+T_1\alpha(c(s))
\ee
for integers $T_0$, $T_1\in\N$ and
\be\label{pseudo}
h_{\eta,\mu,\rho}^\infty(m,m'):=\liminf_{T_0,T_1\rightarrow\infty}h_{\eta,\mu,\rho}^{T_0,T_1}(m,m'),\quad\quad\forall m,m'\in \T.
\ee
The function $h_{\eta,\mu,\rho}^\infty(\cdot,\cdot)$ is well defined as long as $c,c'\in(a_i,b_i)$ and $|c-c'|\ll1$. Moreover, 
If we denote by $\mathscr{C}_{\eta,\mu,\rho}$ the set of all the minimizers of (\ref{pseudo}), then any orbit $\gamma(t):\R\rightarrow \T$ in it conforms to the Euler-Lagrange equation 
\be\label{e-l}
\frac{d}{dt}L_v(\gamma(t),\dot{\gamma}(t),t)=L_x(\gamma(t),\dot{\gamma}(t),t),\quad\forall t\in\R,
\ee
and works as a heteroclinic orbit connecting $\mathcal{A}(c)$ and $\mathcal{A}(c')$. 
\end{lem}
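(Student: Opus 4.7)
The plan is to exploit the interpolation structure of $L_{\eta,\mu,\rho}$: it coincides with the $c$-shifted Lagrangian for $t\le 0$, with the $c'$-shifted Lagrangian for $t\ge\delta$ (because $[\eta+\mu]=c'$), and differs from both only on the short transition window $[0,\delta]$. The unified neighborhood $U$ from Lemma \ref{lem:open} is what lets us control the transition term. For well-definedness of $h_{\eta,\mu,\rho}^\infty$, I would combine domination by weak KAM solutions on the two tails: on $[-T_0,0]$,
\[
\int_{-T_0}^{0}\bigl(L-\langle\eta,\dot\gamma\rangle\bigr)\,dt+T_0\alpha(c)\;\ge\;u_c^-(\gamma(0),0)-u_c^-(\gamma(-T_0),-T_0),
\]
and an analogous estimate on $[\delta,T_1]$ using a forward weak KAM solution of class $c'$. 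The contribution of the interval $[0,\delta]$ is uniformly bounded in terms of the superlinear growth constants of $L$. This gives a lower bound on (\ref{eq:finite-pseudo}) uniform in $T_0,T_1$, hence the liminf defining $h_{\eta,\mu,\rho}^\infty$ is finite.

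Existence of minimizers for each pair $(T_0,T_1)$ is Tonelli's direct method applied to (\ref{eq:finite-pseudo}). To extract a global minimizer $\gamma_\infty:\R\to\T$ of (\ref{pseudo}), I would diagonalize along a sequence $(T_0^n,T_1^n)\to(\infty,\infty)$ realizing the liminf, invoking the Lipschitz bound on minimizers that follows from superlinearity and the a priori action bound just established. For the Euler-Lagrange equation, the decisive point is that $\mathrm{supp}(\mu)\cap U=\emptyset$. On the two outer regions $t\le 0$ and $t\ge\delta$, the modification amounts to subtracting a closed $1$-form $\eta$ or $\eta+\mu$, which leaves the Euler-Lagrange equation intact. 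On the transition window $[0,\delta]$ the computation
\[
\frac{d}{dt}L_v=L_x+\rho'(t)\mu(\gamma(t))
\]
shows the only discrepancy vanishes precisely when $\gamma(t)\notin\mathrm{supp}(\mu)$. Thus one needs $\gamma_\infty([0,\delta])\subset U$; this is forced by choosing $\delta$ small and using upper semi-continuity (Lemma \ref{lem:upper-semi}) together with the asymptotic behavior of $\gamma_\infty$, since any excursion of a minimizer outside $U$ costs strictly positive action that cannot be recovered.

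For the heteroclinic property, I would argue as in standard Mañé theory: any $\alpha$-limit point of $\gamma_\infty$ is $c$-static and any $\omega$-limit point is $c'$-static, otherwise one could shortcut via the Peierls barrier $h_c^\infty$ (resp.\ $h_{c'}^\infty$) and strictly decrease the action, contradicting the minimizing property. Combined with the graph property of $\wt\cA(c),\wt\cA(c')$, this gives a bona fide heteroclinic from $\cA(c)$ to $\cA(c')$.

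The main obstacle I anticipate is the middle step: ensuring that the minimizer $\gamma_\infty$ stays inside $U$ throughout the transition interval $[0,\delta]$, so that the spurious $\rho'\mu$-term drops out and the orbit is a genuine Euler-Lagrange trajectory of $L$ on all of $\R$. This is where the smallness of $|c-c'|$ (hence of $\|\mu\|_{C^0}$ allowed by the cohomology constraint $[\mu]=c'-c$) and the robustness of the unified neighborhood $U$ provided by upper semi-continuity of the Mañé set must be deployed in tandem, presumably by first choosing $U$, then choosing $\delta$ small relative to the action gap that any excursion outside $U$ would incur.
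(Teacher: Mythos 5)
Your proposal follows essentially the same strategy as the paper's (sketched) proof, which appeals to \cite{CY1,CY2}: the crucial step in both is confining the minimizer's transition arc to the unified neighborhood $U$ where $\supp\mu$ is absent, so that the spurious $\rho'(t)\mu(\gamma(t))$ term in the Euler--Lagrange equation vanishes and the orbit is a genuine $L$-extremal. One small clarification on the step you flagged as the main obstacle: the paper handles confinement not via an action-gap argument but more directly, by observing that $\mathscr{C}_{\eta,0,0}=\cN(c)$ and that the minimizer set $\mathscr{C}_{\eta,\mu,\rho}$ is itself upper semi-continuous in $(\mu,\rho)$, so for $\|\mu\|$ small and $\delta$ small the set $\mathscr{C}_{\eta,\mu,\rho}\cap\Sigma_t$ lies in $U$ for all $t\in[0,\delta]$; this is a cleaner deployment of upper semi-continuity than trying to quantify the action cost of an excursion outside $U$.
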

\begin{proof}
As a direct citation of conclusions in \cite{CY1,CY2}, here we just give a sketch of the proof. Recall that $L_{\eta,\mu,\rho}$ is positively definite, so we can get the compactness of $\mathscr{C}_{\eta,\mu,\rho}$ which will be non-empty accordingly. Similar with Lemma \ref{lem:upper-semi}, $\mathscr{C}_{\eta,\mu,\rho}$ is upper semi-continuous w.r.t. $\big\{(\mu,\rho)\in H^1(\T,\R)\times C^\infty(\R,\R), |\cdot|\times \|\cdot\|\big\}$. Due to Lemma \ref{lem:open} and Lemma \ref{lem:class}, there exists $0<\dt<1$ suitably small, such that 
$\mathscr{C}_{\eta,\mu,\rho}\bigcap \Sigma_t\subset U$ as well, for any $t\in[0,\delta]$. This is because $\mathscr{C}_{\eta,0,0}=\mathcal{N}(c)$ and $\mathscr{C}_{\eta,\mu,\rho}$ is upper semi-continuous. That implies only for $t\in[0,\delta]$ we have $L_{\eta,\mu,\rho}\neq L$. However, $\forall\gamma\in\mathscr{C}_{\eta,\mu,\rho}$ should satisfies $\{\gamma(n)\}_{n=-\infty}^{+\infty}\cap$ supp$\mu(x)=\emptyset$, which implies for $t\in[0,\delta]$ conforms to the same Euler Lagrange equation as (\ref{e-l}).
\end{proof}
\vspace{5pt}

\noindent{\it Proof of Theorem \ref{thm:s-t}:}
It's easy to see that any two $c_1<c_2\in(a_i,b_i)$ are {\sf $c-$equivalent}, in the sense that we can input finitely many $\{c'_i\}_{i=1}^n$ contained in $[c_1,c_2]$, such that for any couple $(c_1,c'_1)$, $(c'_1,c'_2)$, $\cdots$, $(c'_i,c'_{i+1})$, $\cdots$, $(c'_{n-1},c'_n)$, $(c'_n,c_2)$, previous Lemma \ref{lem:c-c'} applies. Benefit from this property, we can find a so called {\sf transition chain} connect $\cA(c_1)$ to $\cA(c_2)$ and vice versa. Therefore, we can find a shadowing orbit which follows the interior part of the transition chain and visit suitably small neighborhoods of $\cA(c'_1)$ and $\cA(c'_n)$ in finite time. We can show that such a shadowing orbit is minimal for certain variational calculus formed like (\ref{eq:finite-pseudo}), then together with a variational calculus like (\ref{pseudo}) for $(c_1,c'_1)$ and $(c'_n,c_2)$, we can figure out a minimal orbit which taking $\cA(c_1)$ as the $\alpha-$limit set and $\cA(c_2)$ as the $\om-$limit set, vice versa.\qed

\begin{rmk}
In the model of convex billiards, \cite{Z1} has proved a similar result as Theorem \ref{thm:s-t}, even though the twist index is degenerate there.
\end{rmk}
\vspace{10pt}

\subsection{Outlook: from twist maps to high dimensional systems}

Previous discussions tell us that, the singularity of weak KAM solutions would never happen for an isolated $c\in H^1(M,\R)$, instead, it happens for a connected component of $c$, a so called {\sf Instability Region} $\cI_c\subset H^1(M,\R)$. For any $c\in\cI_c$, the singular GCs of $u_c^-$ form certain `topological obstruction', which will constraint the homology of $\cN(c)$. If so, we have $H_1(M,\cN(c),\R)\neq\emptyset$, and for any $\Dt c\in H^1(M,\R)$, which satisfies 
\[
\langle \Dt c,H_1(\cN(c),\R)\rangle=0
\]
in the sense of de Rham product. Therefore, for any $c,c'\in\cI_c$ with $|c-c'|\ll1$ and $ c-c'//\Dt c$, there should exist heteroclinic orbits connecting $\cA(c)$ and $\cA(c')$.

Notice that a `local surgery' with a rectified variational calculus formed like (\ref{eq:finite-pseudo}) and (\ref{pseudo}) is crucial to capture certain minimal heteroclinic orbits, since it constraints the changing of cohomology to a rather short time interval. If the Hamiltonian is autonomous, result in \cite{B} has shown such $c,c'$ has to lie on the same flat domain of $\alpha-$function, which indicates $\cA(c)\cap\cA(c')\neq\emptyset$ and then the heteroclinic connection becomes meaningless. On the other side, imitation of a similar variational principle as  (\ref{eq:finite-pseudo}) and (\ref{pseudo}) for the autonomous Hamiltonians is rather tricky and implicit \cite{C3}.\\

As a high dimensional extension, the uniqueness of singular GC and the well-definiteness for all $t\in[0,+\infty)$ should be the foremost difficulties we should overcome. Assuming mechanical systems seems to effectively ensure the uniqueness, and some evidence has been gotten in \cite{CCF}, which reveals certain homotopical equivalence between the singular GC and the projected Aubry set. However, if the maximal domain is finite, the singular GCs will not be able to form effective obstruction to the $\cN(c)$, which leads to a disability to construct local heteroclinic connection.

\begin{que}
For the Arnold type Hamiltonian 
\[
H_\eps(x,y,q,p,t)=\frac12 y^2+\frac12 p^2+(\cos q-1)\big(1+\eps V(x,t)\big),\quad\eps\ll1
\]
defined on $T^*\T\times T^*\T\times\T$, could we prove a similar conclusion as Theorem \ref{thm:g-c-t} ?
\end{que}
The practical meaning of this model is that for generic $V(x,t)\in C^2(\T^2,\R)$, we can construct diffusion orbits. Moreover, the normally hyperbolic invariant cylinder would assist us to constraint the topological state of singular GCs.

\vspace{20pt}

\end{document}